\documentclass[reqno,11pt,letterpaper]{amsart}
\usepackage{amsmath,amssymb,amsthm,graphicx,mathrsfs,url}
\usepackage[usenames,dvipsnames]{color}
\usepackage[colorlinks=true,linkcolor=Red,citecolor=Green]{hyperref}
\usepackage{amsxtra}

\usepackage{bbm}
\usepackage{epstopdf}

\usepackage[dvipsnames]{xcolor}

\usepackage{verbatim}
\usepackage{graphicx}
\newcommand\smallO{
  \mathchoice
    {{\scriptstyle\mathcal{O}}}
    {{\scriptstyle\mathcal{O}}}
   {{\scriptscriptstyle\mathcal{O}}}
    {\scalebox{.7}{$\scriptscriptstyle\mathcal{O}$}}
 }

\def\?[#1]{\textbf{[#1]}\marginpar{\Large{\textbf{??}}}}

\numberwithin{equation}{section}

\let\epsilon=\varepsilon 

\usepackage{etoolbox}
\makeatletter
\patchcmd{\@setaddresses}
  {\par\addvspace\bigskipamount\indent}
  {\par\addvspace\smallskipamount\indent}
  {}{}
\makeatother

\allowdisplaybreaks
\usepackage[
left=0.65in, right=0.65in]{geometry}

\usepackage{amsxtra}
\usepackage{epstopdf}
\newtheorem{theorem}{Theorem}
\newtheorem{proposition}{Proposition}[section]

\newtheorem{lemma}[proposition]{Lemma}

\newtheorem{remark}{Remark}

\usepackage{tikz,amsmath,amssymb}
\usetikzlibrary{patterns,arrows.meta}
\usepackage{graphicx}

\title{The Inhomogeneous Kinetic FPUT Equation near Rayleigh-Jeans Equilibria}

\author{Angeliki Menegaki}  
\address{Department of Mathematics, Huxley building, South Kensington campus, Imperial College London, London SW7 2AZ, United Kingdom}
\address{ Department of Mathematics, University of Athens, Panepistimiopolis 157 84, Athens, Greece}
\email{a.menegaki@imperial.ac.uk}

\author{Haoling Xiang} 
\address{Department of Mathematics, Huxley building, South Kensington campus, Imperial College London, London SW7 2AZ, United Kingdom}
\email{h.xiang23@imperial.ac.uk}

\begin{document}
\maketitle

\begin{abstract}
We study the nonlinear dynamics of the spatially inhomogeneous
kinetic wave equation associated with the FPUT-$\beta$ problem.
We prove global-in-time well-posedness near nonsingular
Rayleigh-Jeans equilibria and their asymptotic stability in particular in 
weighted $L^\infty$ norms in space and momentum, for small
initial perturbations orthogonal to the collision kernel
at almost every spatial point. 
We describe the spectrum near the origin at small spatial
frequencies, identifying two simple eigenvalues with
fractional and diffusive orders $|\xi|^{8/5}$ and $\xi^2$,
respectively. 
Combining this spectral analysis with energy estimates
taking into account the transport effect, we establish 
algebraic decay for the linear semigroup despite the absence of a spectral gap for the collision operator. We then use these estimates to obtain nonlinear stability. 
\end{abstract}

\tableofcontents

\section{Introduction}
\label{sec:introduction}

We study the spatially inhomogeneous phonon Boltzmann equation, also
called the Boltzmann-Peierls equation, arising in the kinetic limit from the weakly
anharmonic microscopic Fermi-Pasta-Ulam-Tsingou-$\beta$ (FPUT-$\beta$) chain. 
We prove global-in-time well-posedness and polynomial decay for small
perturbations of a nonsingular Rayleigh-Jeans thermodynamic equilibrium. 
To our knowledge, this is the first global stability result near such equilibria for the spatially inhomogeneous kinetic FPUT equation.

The FPUT-$\beta$ system is a chain of anharmonic oscillators on the lattice, where each oscillator is tracked by both its position $q_i$ and momentum $p_i$. Its Hamiltonian energy is formally given by
$$ H(p,q)=\sum_{i\in\mathbb{Z}}
 \left[ \frac{p_i^2}{2} + F_\lambda(q_{i+1}-q_i) \right],
 \quad \text{where }\quad F_\lambda(r)=\frac{r^2}{2}+\lambda\frac{r^4}{4},
 \qquad \lambda>0.
$$
It consists of the kinetic energy and the  nearest-neighbour interactions. 

Such chains have a long history and go back to the pioneering numerical experiments at
Los Alamos in the $1950$s. Despite the nonlinearity in the dynamics, these numerical experiments revealed recurrent behaviour in which the energy distribution returned close to its initial state rather than being rapidly equipartitioned among the modes, as Fermi and his team had expected. 
This is the FPUT paradox, which stimulated extensive work on nonlinear Hamiltonian dynamics, with connections to KAM theory,
solitons and integrable systems, and wave turbulence, see for example  
\cite{fiftyFPU,OLDC2023,GLM26review} and the references therein. 

These observations raised fundamental questions about thermalisation in anharmonic oscillator chains. 
A related question concerns their ability to transport energy
and whether Fourier's law holds. 
The phonon Boltzmann equation describes their energy transfer through resonant interactions between normal modes in the weakly anharmonic regime. Understanding its long-time behaviour is therefore an important part of the kinetic approach to thermal conductivity.  

We first present the equation and our results, and later discuss their relation to this physical background and to previous mathematical work. 

\subsection{The equation and the perturbation}

We consider the inhomogeneous kinetic FPUT equation
\begin{equation} \label{eq:model}
 \partial_t F + \omega'(p)\partial_xF =\mathcal{C}[F], 
 \qquad (t,x,p)\in\mathbb{R}_+
 \times\mathbb{R} \times \mathbb{T},
\end{equation}
where $\mathbb{T} = 
\mathbb{R}/(2\pi\mathbb{Z})$. We use normalised units in which the dispersion relation is 
$$ \omega(p) = \left|\sin \frac{p}{2} \right|,
 \qquad v(p) :=\omega'(p).
$$

The four-phonon collision operator has the formal expression
\begin{equation}\label{eq:collision_op}
\begin{split}
 \mathcal{C}[F](p_0)
 &= \frac{1}{2} \int_{\mathbb{T}^3}
 \delta_{\mathbb{T}}(p_0+p_1-p_2-p_3)
 \delta(\omega_0+\omega_1-\omega_2-\omega_3)
 \prod_{\ell=0}^3\omega_\ell \prod_{\ell=0}^3 F_\ell 
 \\
 &\hspace{6cm}\times
 \left[\frac{1}{F_1} + \frac{1}{F_0} - \frac{1}{F_3} - \frac{1}{F_2}
 \right] 
  dp_1 dp_2 d p_3,
\end{split}
\end{equation}
where $F_j=F(t,x,p_j)$ and $\omega_j=\omega(p_j)$. 
Here the two Dirac distributions impose the resonance conditions
$p_0+p_1=p_2+p_3$ modulo $2\pi$ and
$\omega(p_0)+\omega(p_1)=\omega(p_2)+\omega(p_3)$.
These conditions define the resonant set on which the
interactions take place. They express conservation of wave number (modulo $2\pi$), and energy. 

The operator is understood through the parametrisation of 
the nontrivial resonant branch, 
\cite[Section 1.3]{GLM26} and \cite{LukkSpohn}. 
For fixed $p_0,p_2$, the nontrivial
solution to the resonant constraints is given by 
\begin{equation} \label{eq:parametrisation_nontrivial_branch}
\begin{split} 
 &p_1=h(p_0,p_2) \pmod{2\pi}, \qquad
p_3=p_0+p_1-p_2\pmod{2\pi},
 \\ &h(p_0,p_2) :=\frac{p_2-p_0}{2}
 +2\arcsin\left(
\tan\frac{|p_2-p_0|}{4}
 \cos\frac{p_2+p_0}{4} \right).
 \end{split}
\end{equation} 
The corresponding collision kernel is
$$
\sigma_{02}:=\frac{\omega_0\omega_1\omega_2\omega_3}{\sqrt{F_+(p_0,p_2)}}, \qquad F_+(p_0,p_2)
    =\left(\cos\frac{p_0}{2}+\cos\frac{p_2}{2}\right)^2
      +4\sin\frac{p_0}{2}\sin\frac{p_2}{2}.
$$

Writing $F_j=F(p_j)$, the collision operator on this branch is then 
$$ \mathcal{C}[F](p_0) = \int_0^{2\pi}\sigma_{02} \big[
 F_1F_2F_3+F_0F_2F_3 - F_0F_1F_3-F_0F_1F_2
\big] dp_2.
$$

The equation admits the spatially homogeneous stationary solutions 
$$
\mathfrak{f}(p)=(\beta\omega(p)+\gamma)^{-1},
    \qquad \beta,\gamma>0,
$$
called nonsingular Rayleigh- Jeans (RJ) equilibria.  Let us also note that $\mathfrak{f}$ is the unique maximiser of the  entropy
$$
H(F)=\int_{\mathbb{T} } \log F(p) dp
$$ 
among distributions with the same mass and energy as 
$\mathfrak{f}$. For a general characterisation of entropy
maximisers for kinetic wave equations on tori, we refer to
\cite{EGLM25}.  

We fix one such equilibrium and look at solutions of the form 
$$ F=\mathfrak{f} (1+g).$$ 

The perturbation $g$ satisfies 
\begin{equation}\label{eq:nonlinear_eq}
 \partial_t g + v(p)\partial_x g  
 = Lg + \Gamma(g,g) + Q(g,g,g),
 \qquad g(0)=g_0,
\end{equation} 
where $\Gamma$ and $Q$ are the symmetric quadratic and cubic
terms in the collision expansion. Their explicit formulas are given in the nonlinear stability Section \ref{sec:nonlinear}. 

The linearised collision operator $L$ is self-adjoint and nonpositive on $L^2_p$, 
and has the form
$$ L= -a +K ,\qquad a(p)=a_0\omega(p)^{5/3}
+\smallO \left(\omega(p)^{5/3}\right), \quad a_0>0,\qquad \text{as } p\to 0^+,\ 2\pi^- $$ 
with $K$ compact, $a$ a multiplication operator and $\ker L=\operatorname{span}\{\mathfrak f,\omega\mathfrak f\}$. This follows from the characterisation of the
collisional invariants in \cite{LukkSpohn}, see also \cite[Section 7]{GLM26review}. In particular, we see that the collision frequency $a$ vanishes at the momentum edges: as $p \to 0^+, 2\pi^-$.

The linear equation is
\begin{equation}\label{eq:linearised_eq}
 \partial_tg+v(p)\partial_xg=Lg.
\end{equation}

We use the \emph{spatial Fourier transform} 
$\widehat{h}(\xi,p)=\int_{\mathbb{R}}e^{-ix\xi}h(x,p) dx$, 
with inverse factor $(2\pi)^{-1}$. Then the linear equation for $\widehat{g} = \widehat{g}(t,\xi,p)$ is  
\begin{equation}\label{eq:linear_fourier}
 \partial_t\widehat{g} = B(\xi)\widehat{g},
 \qquad B(\xi):=L-i\xi v.
\end{equation}

If we denote the physical-space semigroup by $S(t)$, the solution
with initial data $g_0$ is $g(t) := S(t)g_0$, where 
\begin{equation}
\label{eq:physicalsemigroup_intro}
\widehat{g}(t,\xi,p) = \widehat{S(t)g_0}(\xi,p) = e^{tB(\xi)} \widehat{g}_0(\xi,p).
\end{equation}

\subsection{Main results}
To state the decay estimates, we use the following weighted $L^\infty_pL^r_\xi$ norms, with the $p$-norm taken first: 
$$
 X_{\mu,r}[h]
 :=\left\|\|\omega^\mu\widehat h(\xi)\|_{L^\infty_p}
       \right\|_{L^r_\xi},
 \quad \text{ for }\quad  r=1,2 \text{ and } \ \mu \in \{-1/2, 1/6, 1/2\}.  
$$
We also denote by 
$\|h\|_{\mathcal{X}} := X_{1/6,1}[h] + X_{1/6,2}[h]$
and $\mathscr{Y} := L^2(a^{-1} dp)$. 

We write $\langle t \rangle = 10+t$, and denote by $\Pi$ the
$L^2_p$-orthogonal projection onto $\ker L$.

We first describe in detail the low spatial-frequency spectrum of the operator $B(\xi)$, which is the content of Section \ref{sec: lowfrequency_spectrum}. 

\begin{theorem}[Low-Frequency Spectrum] \label{theo: low_freq_spectrum_intro} 
There exist $\xi_*,\vartheta_*>0$ such that, for  $0<|\xi|<\xi_*$, the spectrum of $B(\xi)$ in the closed disk
$|z|\leq \vartheta_*|\xi|$ consists of exactly two algebraically
simple, real, strictly negative eigenvalues.  They satisfy
$$
 z_{\rm fr}(\xi)=-c_{\rm fr}|\xi|^{8/5}+ \smallO(|\xi|^{8/5}),
 \qquad
 z_{\rm df}(\xi)=-c_{\rm df}\xi^2+ \smallO(\xi^2),
$$
where $c_{\rm fr},c_{\rm df}>0$. 
\end{theorem}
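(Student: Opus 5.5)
Our plan is to treat $B(\xi)=L-i\xi v$ as a perturbation of $L$ near $z=0$. Because $L$ has no spectral gap, standard Kato theory does not apply directly. Instead we reduce the eigenvalue problem to a $2\times 2$ matrix on $\ker L=\operatorname{span}\{\mathfrak f,\omega\mathfrak f\}$ by a Schur complement (Lyapunov--Schmidt) argument.

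Write $P=\Pi$ and $P^\perp=I-\Pi$. For $|z|\le\vartheta_*|\xi|$, the equation $(B(\xi)-z)u=0$ with $u=\alpha+w$, $\alpha\in\ker L$, $w\in\operatorname{ran}P^\perp$, splits into
\begin{align*}
 &-z\alpha - i\xi P v(\alpha+w)=0,\\
 &(P^\perp L P^\perp - i\xi P^\perp vP^\perp - z)w = i\xi P^\perp v\alpha .
\end{align*}
The first key step is to invert the operator $R(z,\xi):=(P^\perp(L-i\xi v)P^\perp-z)$ on $\operatorname{ran}P^\perp$ for such $z$ and small $\xi$. We intend to do this with the decomposition $L=-a+K$. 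The multiplication part $-a-i\xi v-z$ is invertible away from the edges. Near $p=0,2\pi$ we have $a\sim\omega^{5/3}$ and $|v|\sim 1/2$, so $|a+i\xi v+z|\gtrsim a+|\xi|$ provided $\vartheta_*$ is small, because the imaginary part $\xi v$ dominates $z$. The compact part $K$ is then handled by a Fredholm argument, using injectivity of $P^\perp L P^\perp$ on $\operatorname{ran}P^\perp$ together with a compactness and contradiction argument as $\xi\to0$. Substituting $w=i\xi R^{-1}P^\perp v\alpha$ into the first line turns the problem into the scalar condition $\det M(z,\xi)=0$, where
\[
M(z,\xi)=-z\,\mathrm{Id}-i\xi\,PvP+\xi^2\,PvP^\perp R(z,\xi)^{-1}P^\perp vP .
\]

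Next we compute the asymptotics of $M$. By parity, $v$ is odd under $p\mapsto 2\pi-p$ while $\mathfrak f$ and $\omega\mathfrak f$ are even, so $PvP=0$ and the first-order term vanishes. The second-order term $\xi^2\langle v e_j,R^{-1}ve_k\rangle$ is the crux. Decompose it into a regular part, which is where $L^{-1}$ applied to $P^\perp v\omega\mathfrak f$ makes sense and converges to a finite conductivity-type coefficient, and an edge-singular part. For the $\mathfrak f$-direction, $v\mathfrak f$ does not vanish at the edges, so $\langle v\mathfrak f,(a+i\xi v)^{-1}v\mathfrak f\rangle$ diverges as $\xi\to0$. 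Scaling $\omega\sim p$ near the edges, the integral $\int_0 \frac{dp}{p^{5/3}+i\xi}\,\xi^2$ gives, after the change of variables $p=|\xi|^{3/5}s$, a contribution $\sim c|\xi|^{2-3/5\cdot(5/3)}\cdot|\xi|^{3/5}$, that is, of order $|\xi|^{8/5}$ with a complex constant whose real part is negative. Because of the odd/even symmetry the imaginary parts cancel between the two edges, which makes the constant real. The combination $\omega\mathfrak f$ vanishes at the edges and therefore yields an $O(\xi^2)$ diffusive entry; the off-diagonal entries are of lower order than the geometric mean of the diagonal ones. We will need to choose the basis of $\ker L$ orthogonal so that the edge-singular part concentrates on one direction, namely the component not vanishing at $p=0$, and the diagonal then reads $-c_{\rm fr}|\xi|^{8/5}$ and $-c_{\rm df}\xi^2$. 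Positivity of the constants comes from $\operatorname{Re}\langle u,-R^{-1}u\rangle>0$, which follows from $L\le0$, and from nondegeneracy, since $P^\perp v\omega\mathfrak f\neq0$.

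Finally, we solve $\det M=0$. Rescale $z=|\xi|^{8/5}\zeta$ near the first root and $z=\xi^2\zeta$ near the second, and use Rouché's theorem, or the implicit function theorem in $\zeta$ if $M$ is analytic in $z$, which holds since $R^{-1}$ is analytic where it is invertible. This gives exactly one root near each value, and shows that there are no other roots in $|z|\le\vartheta_*|\xi|$, because there $|\det M|\gtrsim |z|^2$ unless $z$ is small of the stated orders. Each root is real because complex conjugation combined with $p\mapsto2\pi-p$ commutes with $B(\xi)$, so the spectrum is conjugation-symmetric, and a simple isolated root must therefore be real. Simplicity follows from the simple zeros of $\det M$ together with a nondegenerate Riesz projection. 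The main obstacle is the uniform invertibility of $R(z,\xi)$ without a spectral gap, together with the sharp edge asymptotics producing $|\xi|^{8/5}$ and their $o(\cdot)$ error control; this is where the precise $a\sim a_0\omega^{5/3}$ expansion is used.
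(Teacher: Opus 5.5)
Your architecture (Schur complement onto the two-dimensional kernel, parity killing the first-order term, edge rescaling $\omega\sim|\xi|^{3/5}y$, Rouch\'e on small and outer circles, reality via $h\mapsto\overline{h(2\pi-\cdot)}$) matches the paper. However, the step everything rests on, inverting the microscopic block, fails as you set it up.

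You project with the flat $\Pi$ and work in flat $L^2_p$. The compression $P^\perp B(\xi)P^\perp$ on $\operatorname{ran}P^\perp$ differs from multiplication by $-a-i\xi v$ only by compact and finite-rank terms. Its essential spectrum therefore contains the edge limits $\mp i\xi/2$. Hence, whenever $R(z,\xi)$ is invertible with $|z|\le\vartheta_*|\xi|$,
$$\|R(z,\xi)^{-1}\|_{2\to2}\ge\operatorname{dist}\big(z,\sigma(P^\perp B(\xi)P^\perp)\big)^{-1}\gtrsim|\xi|^{-1},$$
so no uniform bound exists.

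A compactness--contradiction argument does not even give invertibility. $P^\perp LP^\perp$ is injective, but $0$ lies in its essential spectrum. Normalized near-kernel sequences can concentrate at the edges and converge weakly to $0$, and compactness of $K$ then yields no contradiction. The quadratic form $\operatorname{Re}\langle Rw,w\rangle=\langle Lw,w\rangle-\operatorname{Re}z\|w\|^2$ only excludes a kernel for $\operatorname{Re}z\ge0$, not in the left half-disk where the eigenvalues sit.

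Without structured control of $R^{-1}$, the crude bound on $\xi^2\langle v\phi_j,R^{-1}P^\perp v\phi_k\rangle$ is at best $O(|\xi|)$, which resolves neither $|\xi|^{8/5}$ nor $\xi^2$. In particular, replacing $R^{-1}$ by the free resolvent in the $\mathfrak f$-direction is unjustified. One has $\|(a+z+i\xi v)^{-1}v\mathfrak f\|_{L^2_p}\sim|\xi|^{-7/10}$, while the pairing you need has size $|\xi|^{-2/5}$ and must be known to $o(|\xi|^{-2/5})$. So the $K$- and compression-corrections cannot be controlled by $L^2_p$ operator norms.

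The missing idea is to remove the edge degeneracy before projecting, which is what the paper does.

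1. Put $w=a^{1/2}u$ and $\mathscr K=a^{-1/2}Ka^{-1/2}$, which is compact and self-adjoint on $L^2_p$. The eigenproblem becomes $\mathscr M_\xi(z)w=0$ with $\mathscr M_\xi(z)=(I-\mathscr K)+\mathfrak m_\xi(z)\mathscr K$ and $\mathfrak m_\xi(z)=(z+i\xi v)/(a+z+i\xi v)$.
2. Project onto $\mathscr N=a^{1/2}\ker L=\ker(I-\mathscr K)$. This corresponds to the $L^2_a$-projection $\Pi_a$, not the flat $\Pi$.
3. The microscopic part $\mathscr A:=\mathsf Q(I-\mathscr K)\mathsf Q$ satisfies $\mathscr A\ge d_*>0$. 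This holds because $I-\mathscr K\ge0$ and $1$ is an isolated eigenvalue of a compact self-adjoint operator.
4. Your own bound $|a+z+i\xi v|\gtrsim a+|\xi|$ makes $\mathfrak m_\xi(z)$ uniformly bounded and strongly convergent to $0$. By compactness, $\|\mathfrak m_\xi(z)\mathscr K\|\to0$ uniformly on the disk.
5. A Neumann series then gives a uniformly bounded microscopic inverse.

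With this in hand, your heuristics become proofs:

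1. The fractional diagonal has leading term $\langle\mathfrak m_\xi(0)e_{\rm fr},e_{\rm fr}\rangle=\int a\psi_{\rm fr}^2\xi^2v^2/(a^2+\xi^2v^2)\,dp$. Here $e_{\rm fr}=a^{1/2}\psi_{\rm fr}$ and $\psi_{\rm fr}$ is the normalized part of $\mathfrak f$ orthogonal to $\omega\mathfrak f$.
2. The microscopic correction is $o(|\xi|^{8/5})$. This follows from weak convergence of $|\xi|^{-4/5}\mathfrak m_\xi(z)^*e_{\rm fr}$ to $0$ and compactness of $\mathscr K$. The same argument gives the off-diagonal product $o(|\xi|^{18/5})$.
3. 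Your conductivity-type coefficient is $c_{\rm df}=\langle\mathscr A^{-1}\varphi_{\rm df},\varphi_{\rm df}\rangle$ with $\varphi_{\rm df}=a^{-1/2}v\psi_{\rm df}$. This is the precise meaning of ``$L^{-1}$ applied to $P^\perp v\omega\mathfrak f$''.
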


The subscripts ${\rm fr}$ and ${\rm df}$ refer to these fractional and diffusive orders. They reflect the different
edge-behaviour, as $p\to 0^+, 2\pi^-$, of the two kernel components: one, $\mathfrak{f}$,  remains lower bounded at the edges, while the other, $\omega\mathfrak{f}$ vanishes like $\omega$. Figure 1 illustrates these two spectral scales of the above \ref{fig:lowfrequency_spectrum}. 

\begin{figure}[htbp]
    \centering
\begingroup
\definecolor{outerorange}{RGB}{235,119,0}
\definecolor{fractiongreen}{RGB}{0,123,42}
\definecolor{diffusionblue}{RGB}{42,73,177}
\definecolor{eigenred}{RGB}{221,25,25}
\begin{tikzpicture}[x=1.3cm,y=1.3cm,>=Stealth,font=\small]
  \path[use as bounding box] (-3.65,-3.62) rectangle (4.55,3.55);
  \path[pattern=north west lines,pattern color=fractiongreen!65]
    (-1.65,0) circle[radius=.53];
  \path[pattern=north east lines,pattern color=outerorange!65]
    (-.36,0) circle[radius=.15];
  \draw[diffusionblue,line width=1.1pt,dash pattern=on 2pt off 2pt]
    (0,0) circle[radius=2.95];
  \draw[fractiongreen,line width=.9pt,dash pattern=on 2pt off 1.6pt]
    (-1.65,0) circle[radius=.53];
  \draw[outerorange,line width=.9pt,dash pattern=on 1.6pt off 1.4pt]
    (-.36,0) circle[radius=.15];
  \draw[->,line width=.5pt] (-3.48,0)--(3.57,0)
    node[right] {$\operatorname{Re}z$};
  \draw[->,line width=.5pt] (0,-3.18)--(0,3.25)
    node[above] {$\operatorname{Im}z$};
  \node[below right,inner sep=3pt] at (0,0) {$0$};
  \node[diffusionblue,anchor=west] at (2.10,2.12)
    {$\partial \mathbb{D}_{\xi}$};
  \node[diffusionblue,fill=white,inner sep=1.2pt,anchor=north]
    at (2.60,-.13) {$\vartheta_*|\xi|$};
  \draw[diffusionblue,line width=.6pt] (2.95,-.055)--(2.95,.055);
  \node at (1.07,1.23) {$\mathbb{D}_{\xi}$};
  \node[fractiongreen] at (-1.65,.82) {$\mathbb{D}_{\rm fr}$};
  \draw[outerorange,line width=.45pt] (-.43,-.16)--(-.70,-.78);
  \node[outerorange,anchor=north] at (-.70,-.82)
    {$\mathbb{D}_{\rm df}$};
  \fill[eigenred] (-1.59,0) circle[radius=1.4pt];
  \fill[eigenred] (-.38,0) circle[radius=1.4pt];
  \node[eigenred,fill=white,inner sep=1pt,anchor=north]
    at (-1.59,-.15) {$z_{\rm fr}(\xi)$};
  \node[eigenred,anchor=south] at (-.43,.32)
    {$z_{\rm df}(\xi)$};
  \node[align=center] at (0,-3.49)
    {$|\xi|^2\ll |\xi|^{8/5}\ll |\xi|$
};
\end{tikzpicture}
\caption{Low-frequency spectrum for sufficiently small $0<|\xi|$. Each hatched disk contains exactly one simple real eigenvalue. The two disks are disjoint and contained in the blue  central disk 
$\mathbb{D}_{\xi} =\{|z|<\vartheta_*|\xi|\}$, which contains no other spectrum.} 
    \label{fig:lowfrequency_spectrum}
\endgroup
\end{figure}

Having described the spectrum  near the origin for sufficiently small $\xi$, we continue, in Section \ref{sec:linear} by providing decay estimates for the linear semigroup $S(t)$. We summarise our findings in the theorem below: 

\begin{theorem}[Linear decay]\label{thm:linear_intro}
Let $\delta>0$ be small enough, and assume that
$\Pi g_0(x,\cdot)=0$ for almost every $x$. 
For $r=1,2$ and all $t \geq 0$, whenever the right-hand sides
are finite, we have
\begin{align*}
 X_{1/6,r}[S(t)g_0] &\lesssim
 \langle t \rangle^{-2/5}
 \left( X_{-1/2,r}[g_0]
 +\|\widehat{g_0}\|_{L^\infty_\xi \mathscr{Y}}
 \right),\quad \text{ and } \\
 X_{1/2,r}[S(t)g_0]
 &\lesssim
 \langle t\rangle^{-3/5+\delta}
 \left(
 X_{-1/2,r}[g_0]
 +\|\widehat{g_0} \|_{L^\infty_\xi\mathscr{Y}}
 \right).
\end{align*}
Moreover,
$$ \|S(t)g_0\|_{L^2_{x,p}} \lesssim_\delta
 \langle t\rangle^{-3/5+\delta}
 \left( X_{-1/2,2}[g_0]
 +\|\widehat{g}_0\|_{L^\infty_\xi\mathscr{Y}}
 \right).
$$
\end{theorem}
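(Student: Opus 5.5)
We split the Fourier side into low frequencies $|\xi|<\xi_*$ and high frequencies $|\xi|\ge \xi_*$, and work fibrewise in $\xi$, since $\widehat{S(t)g_0}(\xi)=e^{tB(\xi)}\widehat g_0(\xi)$. The norms $X_{\mu,r}$ take $L^\infty_p$ first and then $L^r_\xi$. It therefore suffices to prove pointwise-in-$\xi$ bounds of the form
$$\|\omega^\mu e^{tB(\xi)}\widehat g_0(\xi)\|_{L^\infty_p}\lesssim \langle t\rangle^{-\alpha}\Big(\|\omega^{-1/2}\widehat g_0(\xi)\|_{L^\infty_p}+\|\widehat g_0\|_{L^\infty_\xi\mathscr Y}\Big),$$
possibly with an additional $\xi$-integrable remainder. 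Taking $L^r_\xi$ norms then gives the claimed estimates. The $L^2_{x,p}$ bound follows from the $X_{1/2,2}$ estimate together with an $L^2_p$ control of the part where $\omega$ is small, via Plancherel.

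\textbf{Low frequencies.} Let $\Gamma_\xi=\partial\mathbb D_\xi$ and define the Riesz projection $P(\xi)=\frac{1}{2\pi i}\oint_{\Gamma_\xi}(z-B(\xi))^{-1}dz$. By Theorem~\ref{theo: low_freq_spectrum_intro}, $P(\xi)$ has rank two and
$$e^{tB}P=e^{tz_{\rm fr}}P_{\rm fr}+e^{tz_{\rm df}}P_{\rm df}.$$

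\emph{Eigenvalue part.} The hypothesis $\Pi\widehat g_0(\xi)=0$ gives a cancellation: the left eigenvectors are $O(|\xi|^\kappa)$-perturbations of $\mathfrak f$ and $\omega\mathfrak f$, so $|P_{\rm fr}\widehat g_0|\lesssim |\xi|^{\kappa}\|\widehat g_0\|_{\mathscr Y}$. One gains a power $|\xi|^{\kappa}$ from the orthogonality, and the $\mathscr Y=L^2(a^{-1}dp)$ norm is exactly what controls pairing with $(L-i\xi v)^{-1}$-type corrections near the edges. Then
$$\sup_\xi|\xi|^{\kappa}e^{-ct|\xi|^{8/5}}\lesssim t^{-5\kappa/8},$$
and the weights $\omega^{1/6}$ and $\omega^{1/2}$ on the eigenfunctions determine how much of $\kappa$ is usable. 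The exponents $2/5$ and $3/5$ should come out this way, for instance with $\kappa\approx 2/3$ or $1$ for the respective weights. I would compute the eigenfunction edge profiles, which scale like $(a-i\xi v)^{-1}$ with $a\sim\omega^{5/3}$ and $|v|\sim 1$ near the edges. This gives a boundary layer of width $\omega\sim|\xi|^{3/5}$. Weighting by $\omega^\mu$ converts the layer into the powers $|\xi|^{3\mu/5}$.

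\emph{Complementary part.} For $(I-P(\xi))$ we need decay without a spectral gap. Here I would use a hypocoercive-type energy estimate on $L^2_p$: the dissipation is $\langle -Lg,g\rangle\gtrsim\|\omega^{5/6}(I-\Pi)g\|^2$ and the transport term controls the macroscopic part. This gives degenerate dissipation, which we interpolate against the stronger weight $\omega^{-1/2}$ on the data to obtain algebraic decay. For the $L^\infty_p$ bound we use Duhamel along the multiplicative part,
$$e^{tB}=e^{-t(a+i\xi v)}+\int_0^t e^{-(t-s)(a+i\xi v)}Ke^{sB}\,ds.$$
Here $e^{-ta}\omega^{\mu}\le C\,t^{-3(\mu+1/2)/5}\omega^{-1/2}$, using $a\sim\omega^{5/3}$: with $\mu=1/6$ this gives exactly $t^{-2/5}$, and with $\mu=1/2$ it gives $t^{-3/5}$. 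This is the clean source of the exponents. $K$ is smoothing from $L^2_p$ to weighted $L^\infty_p$, so the integral term reduces to the $L^2$ decay; the $\delta$-loss comes from the time convolution at the borderline exponent.

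\textbf{High frequencies.} For $|\xi|\ge\xi_*$, the operator $B(\xi)$ has no spectrum on the imaginary axis or at the origin. An $L^2_p$ energy and hypocoercive argument gives decay of $e^{tB(\xi)}$ uniformly in $|\xi|\ge\xi_*$, again only algebraic because $a$ degenerates, with the rate governed by $e^{-ta}$ on the $\omega^{-1/2}$-weighted data. The same Duhamel and $K$-smoothing argument transfers this to $L^\infty_p$.

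\textbf{Main obstacle.} The hardest step is the low-frequency complementary part: obtaining resolvent bounds for $(z-B(\xi))^{-1}(I-P(\xi))$ that are uniform along a contour shifted into $\operatorname{Re}z<0$ near the degenerate edges. Since there is no gap, I would avoid contour shifting entirely there. Instead I would use the energy estimate for $(I-P)g$, with $\xi$-uniform constants, combined with the weighted $e^{-ta}$ Duhamel trick.
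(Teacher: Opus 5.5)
\textbf{The gap is in the low-frequency complementary part.} This is the step you flag as the main obstacle, and the mechanism you propose there does not work. A transport-based (hypocoercive) correction gives macroscopic dissipation only of order $\xi^2\|g^\parallel\|_{L^2_p}^2$ as $\xi\to0$, because the transport term is itself $O(|\xi|)$; this is the paper's $\kappa_\xi=\xi^2/(1+\xi^2)$. Inserting it into the edge/bulk splitting of Lemma~\ref{lem:abstract_energy} gives the fibrewise bound $\langle t\rangle^{-\rho}\big(\|\omega^{-\theta}f\|_{L^2_p}+|\xi|^{-2\rho}\|f\|_{L^2_p}\big)$.

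The factor $|\xi|^{-2\rho}$ is not in $L^1_\xi$ near $0$ for $\rho>1/2$, nor in $L^2_\xi$ for $\rho>1/4$. After integrating in $\xi$ you therefore lose the rate $3/5-\delta$, and for $r=2$ even the rate $2/5$ (the paper makes this point in the remark after \eqref{eq:large_frequency_L2_decay}). Restricting the energy to $\operatorname{Ran}(I-P(\xi))$ does not help unless you use a property of that subspace. Your claim of $\xi$-uniform constants is unsupported, and the paper's constants are not uniform either.

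\textbf{The missing idea is Lemma~\ref{lem:estimates_linear}(iii).}
\begin{itemize}
\item Since $z_j$ is real and $B(\xi)^*=L+i\xi v$, every $g\in\operatorname{Ran}\mathbb{R}_\xi=\operatorname{Ran}(I-P(\xi))$ satisfies $\langle g,\overline{u_j}\rangle=0$ for both eigenvectors.
\item Because $u_j\to\psi_j$ in $L^2_p$ and $\|u_j\|_{\mathscr{Y}}\lesssim|\xi|^{-1/3}$, this gives $\|g^\parallel\|_{L^2_p}\lesssim|\xi|^{-1/3}\|g^{\perp_a}\|_{L^2_a}$. The bound persists along the flow because the subspace is invariant.
\item With this, the plain $L^2_p$ energy identity closes with no transport correction. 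The abstract lemma applies with $\sigma=|\xi|^{2/3}$, and the singular factor becomes $|\xi|^{-2\rho/3}$, which is integrable in $L^1_\xi$ and $L^2_\xi$ for every $\rho<3/5$.
\end{itemize}
This replaces the preserved orthogonality of the homogeneous problem. It needs the quantitative eigenvector bounds of Lemma~\ref{lem:estimates_linear}(i), which rest on the edge smoothing estimate for $K$.

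\textbf{Other steps that need correcting.}
\begin{itemize}
\item \emph{Coercivity.} It must use the weighted projection: $-\langle Lh,h\rangle\gtrsim\|a^{1/2}(I-\Pi_a)h\|_{L^2_p}^2$. With the flat $\Pi$ it fails. Take $h=\epsilon^{-1}\mathbf{1}_{\{\omega<\epsilon,\,p<\pi\}}$: then $-\langle Lh,h\rangle\lesssim\|h\|_{L^2_a}^2\sim\epsilon^{2/3}$, while $\|a^{1/2}\Pi h\|_{L^2_p}$ stays bounded below.
\item \emph{Eigenmode gain.} The gain is $\sqrt{-z_j}$, i.e.\ $|\xi|^{4/5}$ and $|\xi|$. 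It comes from $|\langle f,\overline{u_j}\rangle|=|\langle f,\overline{u_j}^{\perp_a}\rangle|\le\|f\|_{\mathscr{Y}}\|\overline{u_j}^{\perp_a}\|_{L^2_a}$ together with $-\langle L\overline{u_j},\overline{u_j}\rangle=-z_j\|u_j\|_{L^2_p}^2$. Taking $\sup_\xi$ then gives only $t^{-1/2}$ in $L^2_p$ for the fractional mode. You should instead keep $\langle t\rangle^{\rho}e^{tz_{\rm fr}}|\xi|^{4/5}\lesssim|\xi|^{4/5-8\rho/5}$ and integrate it in $\xi$.
\item \emph{Source of the exponents.} The rates $2/5$ and $3/5-\delta$ do not come from the eigenmodes. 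They come from $\omega^{\mu+1/2}e^{-ta}$ and from the edge weight $\omega^{-\theta}$ with $\theta<1$. That restriction is needed so that $\|\omega^{-\theta}f\|_{L^2_p}\lesssim\|\omega^{-1/2}f\|_{L^\infty_p}$, and it forces $\rho=3\theta/5<3/5$. This, not a borderline time convolution, is the source of $\delta$.
\item \emph{Propagating the weight.} The edge weight must be propagated along the flow, not just imposed on the data. The paper does this via Duhamel and \eqref{eq:weighted_gain}, closing because $\rho>2/5$.
\item \emph{Smoothing of $K$.} $K$ does not map $L^2_p$ into a weighted $L^\infty_p$: the $K_1$ bound is an Abel kernel $|y-q|^{-1/2}$. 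You must pass through $L^c_p$, $c>2$, on a fixed bulk, then use Lemma~\ref{lem:kernel_splitting_linear}. The $\eta$-term is absorbed via $\sup_{\alpha\ge0}\int_0^t\alpha e^{-\alpha(t-s)}\langle s\rangle^{-b}\,ds\lesssim\langle t\rangle^{-b}$.
\item \emph{Minor points.} For $|\xi|\ge\xi_*$ the essential spectrum $\{-a(p)-i\xi v(p)\}$ does reach the imaginary axis at $\mp i\xi/2$. The $L^2_{x,p}$ bound is most directly obtained from the fibrewise $L^2_p$ decay, since $\omega^{-1/2}\notin L^2_p$.
\end{itemize}
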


By Fourier inversion, we may transfer back to the physical space, where the estimates with $r=1$ also give decay in the weighted $L^\infty_{x,p}$ norms, with rates  $\langle t\rangle^{-2/5}$ and
$\langle t\rangle^{-3/5+\delta}$ for the weights
$\omega^{1/6}$ and $\omega^{1/2}$, respectively.
\\


Finally in Section \ref{sec:nonlinear} we pass perturbatively to the nonlinear problem. In particular, we control the nonlinearities through the careful definition of a new norm that takes into account time, space and momentum. 

\begin{theorem}[Nonlinear stability Theorem]\label{thm:nonlinear_intro}
Let $0<\delta<1/10$. There exists $\varepsilon_\delta>0$
such that, if $ \Pi g_0(x,\cdot)= 0$ for almost every $x$, and
$$\varepsilon:= X_{-1/2,1}[g_0]+X_{-1/2,2}[g_0]
+\|\widehat{g}_0\|_{L^\infty_\xi\mathscr{Y}} 
    \leq \varepsilon_\delta,
$$ 
then the nonlinear equation \eqref{eq:nonlinear_eq} admits a unique global mild solution
$ g\in C([0,\infty);\mathcal{X} )$.  In particular, 
for $r=1,2$ and all $t\geq 0$, it decays as: 
\begin{align*}  
X_{1/6,r}[g(t)]
\lesssim_\delta \varepsilon\langle t \rangle^{-2/5},\qquad
X_{1/2,r}[g(t)]  \lesssim_\delta
 \varepsilon\langle t\rangle^{-3/5+\delta}.
\end{align*}
Moreover, in the physical variables we have 
$$
\|g(t)\|_{L^2_{x,p}}  \lesssim_\delta
\varepsilon\langle t \rangle^{-3/5+\delta}. 
$$
\end{theorem}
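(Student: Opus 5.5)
The proof will be a Duhamel fixed point built on the linear estimates of Theorem~\ref{thm:linear_intro}. We write the mild formulation
$$
g(t)=S(t)g_0+\int_0^t S(t-s)\big[\Gamma(g,g)+Q(g,g,g)\big](s)\,ds ,
$$
and set up a contraction in the Banach space defined by the norm
$$
\|g\|_{\mathcal{E}}:=\sup_{t\ge0}\sum_{r=1,2}\Big(\langle t\rangle^{2/5}X_{1/6,r}[g(t)]+\langle t\rangle^{3/5-\delta}X_{1/2,r}[g(t)]\Big).
$$
We work on the ball of radius $2C\varepsilon$. The linear term is bounded by $C\varepsilon$ directly from Theorem~\ref{thm:linear_intro}.

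To apply the linear theorem to the Duhamel integrand we need two facts.

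\textbf{Orthogonality.} We use $\Pi\Gamma(g,g)=\Pi Q(g,g,g)=0$ pointwise in $x$. This holds because $\mathfrak f$ and $\omega\mathfrak f$ are collision invariants, so the weak form of the full collision operator integrates to zero against them. Every term of the expansion therefore inherits this property, and consequently $\Pi S(t)\,\cdot$ vanishes along the evolution.

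\textbf{Nonlinear estimates in the source norms.} We need bounds in $X_{-1/2,r}$ and $L^\infty_\xi\mathscr Y$ of the form
$$
X_{-1/2,r}[\Gamma(g,g)]+\|\widehat{\Gamma(g,g)}\|_{L^\infty_\xi\mathscr Y}\lesssim X_{1/6,\cdot}[g]\,X_{1/2,\cdot}[g],
$$
with cubic analogues for $Q$.

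In Fourier variables the products become $\xi$-convolutions. Young's inequality then gives $L^1_\xi*L^r_\xi\subset L^r_\xi$ and $L^2_\xi*L^2_\xi\subset L^\infty_\xi$; this is exactly why both $r=1,2$ are carried along. For the $p$-integral we use the parametrisation \eqref{eq:parametrisation_nontrivial_branch} and the explicit kernel $\sigma_{02}$. The factors $\prod\omega_\ell$ absorb the singular weights $\omega^{-1/6}$, $\omega^{-1/2}$ coming from the $L^\infty$ control. The resulting bound must reproduce the weight $\omega^{1/2}$ on the output, matching $X_{-1/2}$, and also the $a^{-1/2}\sim\omega^{-5/6}$ weight needed for $\mathscr Y$.

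The main obstacle is exactly this $p$-integral analysis near the edges $p\to0,2\pi$ and on the degenerate part of the resonant manifold (where $p_2\to p_0$ or $F_+\to0$). There one has to check that $\sigma_{02}$ times the weights is integrable uniformly in $p_0$. I would first try to bound $\int\sigma_{02}\,\omega_1^{-\mu_1}\omega_2^{-\mu_2}\omega_3^{-\mu_3}\,dp_2\lesssim\omega_0^{\kappa}$ with $\kappa$ large enough, using the known asymptotics $h(p_0,p_2)$ near the edges. The loss terms, which carry $F_0$, will be placed on the weaker weight $1/6$ and the gain terms on the $1/2$ weight.

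Granting these estimates, the time integral becomes
$$
\int_0^t\langle t-s\rangle^{-2/5}\langle s\rangle^{-2/5-3/5+\delta}\,ds\lesssim\langle t\rangle^{-2/5}
$$
for the $1/6$ component, since the source exponent $1-\delta>1-$ small is nearly integrable. For the $1/2$ component,
$$
\int_0^t\langle t-s\rangle^{-3/5+\delta}\langle s\rangle^{-1+\delta}\,ds\lesssim\langle t\rangle^{-3/5+2\delta}\log\langle t\rangle .
$$
This is slightly lossy, so I would rebalance by bounding the source by $X_{1/2}[g]^2\sim\langle s\rangle^{-6/5+2\delta}$, which is integrable when $\delta<1/10$. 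Recovering $\langle t\rangle^{-3/5+\delta}$ this way requires placing both factors on the $1/2$ weight, and explains the restriction $\delta<1/10$.

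The cubic term decays even faster. Bilinear and trilinear difference estimates give the contraction for small $\varepsilon_\delta$. Continuity in $\mathcal X$ follows from strong continuity of $S(t)$, and the $L^2_{x,p}$ decay follows from the last estimate of Theorem~\ref{thm:linear_intro} applied in the same way.
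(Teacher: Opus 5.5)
Your architecture matches the paper's: Duhamel formula, the cancellation $\Pi\Gamma=\Pi Q=0$ to feed the source into Theorem~\ref{thm:linear_intro}, Young's inequality in $\xi$ with $L^1_\xi*L^r_\xi\subset L^r_\xi$ and $L^2_\xi*L^2_\xi\subset L^\infty_\xi$, and finally the quadratic source on $X_{1/2}\times X_{1/2}$, decaying like $\langle s\rangle^{-6/5+2\delta}$, which is integrable exactly when $\delta<1/10$. The genuine gap is that the estimate you call the main obstacle is never proved, and your plan for it does not match how you close. You need, for \emph{every} term of $\Gamma$ (including the loss terms you meant to put on the weight $1/6$),
\[
\|\omega^{-1/2}\Gamma(f,h)\|_{L^\infty_p}\lesssim \|\omega^{1/2}f\|_{L^\infty_p}\|\omega^{1/2}h\|_{L^\infty_p},\qquad \|\omega^{-1/2}Q(f_1,f_2,f_3)\|_{L^\infty_p}\lesssim \prod_{j=1}^{3}\|\omega^{1/6}f_j\|_{L^\infty_p}.
\]
The paper gets this in a few lines. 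It uses $F_+(p_0,p_2)\geq 4\omega_0\omega_2$ and $\omega_1\omega_2\omega_3\leq 4\omega_0$, which give $\omega_0^{-1/2}\sigma_{02}\leq\frac12\omega_1\omega_2^{1/2}\omega_3$. After dividing by the input weights, every pair is pointwise bounded except $(0,2)$. For that pair one keeps the coefficient $\mathfrak{f}_3-\mathfrak{f}_1$ and uses $|\omega_3-\omega_1|=|\omega_0-\omega_2|$ on the resonant set. The $\mathscr{Y}$ part needs no separate momentum analysis: $\|f\|_{\mathscr{Y}}\lesssim\|\omega^{-1/2}f\|_{L^\infty_p}$ because $\omega/a\sim\omega^{-2/3}$ is integrable, so $\|\widehat{\Gamma(g,g)}\|_{L^\infty_\xi\mathscr{Y}}\lesssim X_{1/2,2}[g]^2$. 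The cancellation identity also has to be justified for inputs that are unbounded at the edges; the paper does this by truncation and dominated convergence.

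Two of your claims are wrong, and one step is missing.
\begin{itemize}
\item[(i)] With the mixed source $X_{1/6}[g]X_{1/2}[g]\sim\langle s\rangle^{-1+\delta}$, the integral $\int_0^t\langle t-s\rangle^{-2/5}\langle s\rangle^{-1+\delta}ds$ is \emph{not} $\lesssim\langle t\rangle^{-2/5}$. The range $s\leq t/2$ alone gives $\gtrsim\langle t\rangle^{-2/5+\delta}$. So the $X_{1/6}$ component also requires the $X_{1/2,1}[g]X_{1/2,r}[g]$ bound, as in the paper.
\item[(ii)] $\Pi S(t)f$ does not stay zero. Transport changes the local moments, $\partial_t\langle g,\phi\rangle_p+\partial_x\langle vg,\phi\rangle_p=0$ for $\phi\in\ker L$. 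Your argument only needs $\Pi$ of the source at each fixed time, so this claim should simply be dropped.
\item[(iii)] Your global contraction gives uniqueness only in the small ball of your weighted space, while the statement asserts uniqueness in $C([0,\infty);\mathcal{X})$. The paper obtains this, and continuity in time, from a local theory in $\mathcal{X}$ followed by a bootstrap. That local theory uses that $K$ is bounded on $\omega^{-1/6}L^\infty_p$ (Lemma~\ref{lem:kernel_splitting_linear}), so $S(t)$ is a strongly continuous semigroup on $\mathcal{X}$ with $\|S(t)\|_{\mathcal{X}\to\mathcal{X}}\leq e^{Ct}$. It also uses $\|\Gamma(g,g)+Q(g,g,g)\|_{\mathcal{X}}\lesssim\|g\|_{\mathcal{X}}^2+\|g\|_{\mathcal{X}}^3$. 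You assume strong continuity of $S(t)$ on $\mathcal{X}$ without proof.
\end{itemize}
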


Note that by Fourier inversion, the estimates with $r=1$ also yield the weighted pointwise decay in the physical variables: 
$$ \|\omega^{1/6}g(t)\|_{L^\infty_{x,p}}
\lesssim_\delta\varepsilon\langle t\rangle^{-2/5},
\qquad \|\omega^{1/2}g(t)\|_{L^\infty_{x,p}}  \lesssim_\delta \varepsilon \langle t\rangle^{-3/5+\delta}. 
$$

We stress here that in both linear and nonlinear theorems, we impose the orthogonality assumption $\Pi g_0(x,\cdot)=0$, for almost every spatial point $x$.  This orthogonality is imposed only on the initial
data and it is generally not preserved by the linear flow, due to the effect of the transport term. This is in contrast with the spatially homogeneous problem, where the orthogonality is preserved and it is a substantial ingredient in the global well-posedness proof in  \cite{GLM26}.

\subsection{Difficulties and strategy of the proof} 

One of the main difficulties is the degeneracy of the collision frequency $a$ at the momentum edges: $a(p)\sim \omega(p)^{5/3}$
vanishes as $p\to0^+,2\pi^-$.  As a consequence, the collision operator does not possess a spectral gap in the flat $L^2_p$, and instead, a weighted Poincar\'e inequality controls the microscopic component in the weighted geometry $L^2_a$.

This feature is already present in the homogeneous kinetic FPUT model  \cite{GLM26}, corresponding to $\xi=0$, where the essential spectrum touches the origin. The degeneracy at the edges plays a central role in the algebraic relaxation. A similar degeneracy of the collision frequency occurs at
large velocities for the classical Boltzmann equation with
soft potentials in near-equilibrium regimes \cite{SG2006}. Here, however, the momentum variable belongs to the torus, and the collision frequency vanishes at the
momentum edges.

The second major difficulty is that orthogonality to $\ker L$ in
$L^2_p$ is generally not preserved when $\xi\neq0$, due to the
transport term. Preservation of this orthogonality is used essentially in the proof of Rayleigh-Jeans stability in the homogeneous setting \cite{GLM26}.  The challenge here is therefore to deal
with transport and the degeneracy of the collision frequency simultaneously.

A further feature of the inhomogeneous problem is the coexistence of two distinct relaxation scales: the two-dimensional $\ker L$ give rise, at low spatial frequencies,  
to a fractional mode and a diffusive mode. We therefore need estimates that account for both effects, with sufficient control of their dependence on the spatial frequency to obtain 
decay after Fourier inversion.  
The resulting decay must  also be strong enough to close the nonlinear stability argument. 


To overcome these difficulties, we develop an approach combining a precise analysis of the low-frequency spectrum with energy estimates adapted to the degeneracy of the collision operator.

\noindent
\textbf{(i) For the low-frequency spectrum.}
To determine the spectral points for small $|\xi|$, we first work with the new variable $w=a^{1/2}u$, rewrite the eigenvalue
problem for $B(\xi)$ in terms of $w$ and we decompose the space 
into $\mathscr{N} = a^{1/2}\ker L$ and its orthogonal complement. Through a Schur complement argument we then reduce the original problem to a two-by-two matrix
equation. A careful asymptotic analysis of its entries reveals the two different scales appearing in Theorem \ref{theo: low_freq_spectrum_intro}:  $|\xi|^{8/5}$ and $\xi^2$, around which we locate the two eigenvalues by Rouch\'e's theorem.

\noindent
\textbf{(ii) For the linear decay and nonlinear stability.}  
Since we cannot rely on preservation of the orthogonality condition along the flow, at \emph{small spatial frequencies} we 
separate the two eigenmodes and work with the remaining spectral subspace, which is invariant under the flow. On this subspace, we are able to control the kernel component  by the microscopic dissipation. In a way, this replaces the use of preserved orthogonality in the homogeneous case. We estimate the two eigenmodes
separately, using their eigenvalue asymptotics and the smallness of their spectral projections on initial data orthogonal to $\ker L$.    
Regarding \emph{spatial frequencies bounded away from zero}, we obtain control of the kernel component by adding a bounded quadratic
correction to the energy. This construction follows the general idea of modifying the energy as usual in hypocoercivity theory  see
\cite{GUO2010, DMS15, BDMMS20, villani09hypocoercivity, VillaniICM,  CDHMMS26, EvansMenegaki25, BDL22fractional} and the references therein.   

In both frequency regimes, we split the momentum domain into its bulk $\mathcal{B}$ and its edges $\mathcal{E}$. The edges are thick and the thickness decreases with time. On the bulk we exploit  the lower bound on $a$, while 
on the edges we use a proper weighted norm. This eventually gives $L^2_p$ decay on the whole momentum domain.  
Together with a smoothing estimate for $K$ we obtain $L^c_p (\mathcal{B})$-decay with $c>2$, on a fixed bulk $\mathcal{B}$. A further domain splitting and estimates on $K$ allow to control the contributions from the edges, and eventually to get the weighted $L^\infty_p$ decay on the whole momentum domain. To pass back to the physical variable $x$, we apply Fourier inversion once sufficient integrability in $\xi$ has been established.  

For the nonlinear stability, we combine the linear estimates with weighted bounds on the quadratic and cubic nonlinearities. The cancellation identities of the nonlinearities allow us to apply the linear decay estimates to the nonlinear source in
Duhamel's formula. We then close the argument with a small-data
bootstrap argument. 

\subsection{Relation to previous results} 
The phonon Boltzmann equation is a kinetic description of weakly anharmonic lattice dynamics. Its solution describes the phonon distribution and predicts the average distribution of energy among the normal modes. Extensive reviews of the derivation and of the mathematical kinetic theory of such oscillator chains can be found in   \cite{Spohnreview06}, \cite{Lukkarinen2016}, and a more recent one in \cite{GLM26review}. 
Rigorous short-time kinetic expansions have recently been obtained in \cite{VassilevWu2026}. These results concern a spatially homogeneous kinetic description and times much shorter than the
kinetic time scale.   

Regarding the spatially homogeneous kinetic FPUT equation, global well-posedness for initial data close to non-singular Rayleigh-Jeans equilibria was provided in  \cite{GLM26} in weighted $L^\infty$ spaces, implying the stability of such equilibria. For the inhomogeneous kinetic FPUT equation, long-time existence for small data near vacuum was proved in 
\cite{Xiang}.

Regarding the classification of the collisional invariants of phonon Boltzmann equations, Spohn in  \cite{Spohn06}  characterised them, in $d\geq 2$ and under general assumptions
on the dispersion relation, as linear combinations of $1$ and $\omega$.  
The corresponding classification for the one-dimensional FPUT
equation was established in \cite{LukkSpohn}. 
Entropy maximisers under fixed mass and energy constraints, for a general class of kinetic wave equations/phonon Boltzmann equations on tori arising from microscopic lattice dynamics, have been classified in \cite{EGLM25}.  

Substantial progress has been made in the recent years for related wave turbulence equations arising from continuous wave systems. In such cases, the momentum domain is the whole $\mathbb{R}^d$, as opposed to the torus here (which arises as the dual of the lattice). 
In particular, when starting from cubic NLS, in dimensions $d\geq3$, the spatially homogeneous wave kinetic equation has been rigorously derived for arbitrarily long kinetic times, for as long as  sufficiently regular solutions of the kinetic equation exist, see \cite{DengHani1/2, DengHani3}
and references therein. For quadratic dispersive models, a   derivation in the spatially
inhomogeneous setting was obtained in \cite{ACG2025}, up to an arbitrarily small polynomial loss of the kinetic time scale.   
Global-in-time results near vacuum are available for the inhomogeneous NLS wave kinetic equation in \cite{AmpMillPavlovTaskov,Ioakeim22,AmpatzoglouLeger25}, while 
for the homogeneous equation in the 3D  isotropic setting, global well-posedness for data near a nonsingular RJ 
equilibrium, together with its exponential stability, was proved recently in \cite{escobedo2026global}. The linearised collision operator in this nonsingular setting in fact possesses a spectral gap, leading thus to exponential relaxation, 
even though the respective $K$ integral operator is not compact, as opposed to the setting studied here.  For singular RJ equilibria, finite-time
condensation for suitable initial data is proved 
in \cite{EscMenegaki26}.

\noindent
\textbf{Thermal conductivity and fractional diffusion.} 
A standard microscopic setting consists of a chain of oscillators of length $N$ coupled to thermal reservoirs at different temperatures at the boundaries.
Numerically, it is observed that in such models, in the nonequilibrium stationary state with the FPUT-$\beta$ interactions, the thermal conductivity diverges as
$N^{2/5}$, see \cite{AokiKuzn01,LLP05}. The dependence on the model and the different predictions
for transport exponents are discussed in the recent review
\cite{LLP2026}. 
For broader accounts of anomalous heat transport in such 
low-dimensional systems, including applications to 
condensed matter and nanosystems, we refer to  
\cite{BLL2020,BDLL2023}.

An alternative equilibrium approach to conductivity via linear response theory and the Green-Kubo formula relates conductivity to the time integral of the energy current correlation function.  
In the kinetic description this correlation is conjectured to be expressed through the linearised collision semigroup, see \cite[Section 12]{GLM26review} for an extensive formal discussion. In  \cite{LukkSpohn}, Lukkarinen and Spohn proved that the kinetic current correlation decays like $t^{-3/5}$. Integrating this decay up to a transit time proportional to $N$ gives the conductivity prediction $N^{2/5}$. 

Finally we refer to the rigorous fractional diffusion limit for the linearised
inhomogeneous phonon Boltzmann equation for the FPUT model, which was established in  \cite{MelletMerino}. It concerns linearisation  around the singular thermal equilibrium $ \bar{T}/\omega$, with $\bar{T}>0$, and the
derivation of a fractional diffusion equation for the macroscopic temperature perturbation, of the form 
$\partial_t T+c(-\Delta)^{4/5} T=0$. The corresponding limit around nonsingular equilibria is
also discussed formally in
\cite[Section 10.2]{GLM26review}.  Here we focus on nonsingular equilibria (with $\gamma>0$) and we identify both fractional and diffusive eigenvalue scales. 
The fractional eigenvalue scale $|\xi|^{8/5}$ has the same order as the Fourier symbol of $(-\Delta_x)^{4/5}$ appearing
in the hydrodynamic limit of \cite{MelletMerino} and we also
identify a diffusive eigenvalue scale $\xi^2$, corresponding to a longer relaxation time at small spatial frequencies.

\subsection{Notation and organisation of the article}
Throughout the paper, $A\lesssim B$ means that $A\leq CB$
for a positive constant independent of the parameters under consideration, and $A\sim B$ means that $cB \leq A \leq CB$. We write $R=\smallO(B)$ when
$|R|/B\to0$ in the limit under consideration, with $B>0$.  
A sharp asymptotic is always written explicitly, for instance
$A=cB +\smallO(B)$ or $A/B\to c$.
Constants may depend on the fixed equilibrium: $\beta, \gamma$.   

We write $L^2_a:=L^2(a dp)$. We use $\Pi$ and $\Pi_a$ to denote the 
orthogonal projections onto $\ker L$ in the flat  $L^2_p$ and the weighted 
$L^2(a dp)$, respectively. We also write
$h^\parallel = \Pi_a h$ and $h^{\perp_a}=(I-\Pi_a)h$. In the low-frequency spectral analysis in Section \ref{sec: lowfrequency_spectrum}, we call $\mathsf{P}$ the $L^2_p$-orthogonal projection onto $\mathscr{N}=a^{1/2}\ker L$, and its complement $\mathsf{Q}=I-\mathsf{P}$. These projections are related by
$\mathsf{P} (a^{1/2}h) =a^{1/2} \Pi_a h$.

Finally, we denote by $\mathbb{P}_{\rm fr}(\xi)$ and $\mathbb{P}_{\rm df}(\xi)$ the Riesz projections on the e-space of the two low-frequency eigenvalues we find of $B(\xi)$.

We first establish the low-frequency spectral description in Section \ref{sec: lowfrequency_spectrum}. The following section \ref{sec:linear} proves the linear semigroup decay estimates, and the nonlinear stability section applies them
to the full equation in Section \ref{sec:nonlinear}. 
In the Appendix we collect the collisional kernel estimates needed in these arguments, including the smoothing effect of $K$, as well as the sharp asymptotics for the collisional  frequency $a$.

\section{The low-frequency spectrum of $B$}
\label{sec: lowfrequency_spectrum}
In this section we study the spectrum of $B(\xi)=L-i\xi v$ near the origin. 
Since $K$ is compact, the essential spectrum of $B(\xi)$ is the same as that of the multiplication operator $-a-i\xi v$. 
At $\xi=0$, it touches the origin because $a(p)$ vanishes at the edges. For $\xi\neq 0$, the edge-limits of $-a(p)-i\xi v(p)$ are instead $\mp i\xi/2$.  
Lemma \ref{lem:m_small_Disk} below shows that, for sufficiently small $|\xi|>0$, the central disk
$|z|\leq\vartheta_*|\xi|$ avoids the essential spectrum, it lies in the resolvent set of $(-a(p)-i\xi v(p))$ : throughout this disk, $|a(p)+z+i\xi v(p)| \gtrsim a(p)+|\xi|$. We prove that this disk contains exactly two (algebraically simple)  eigenvalues of $B(\xi)=-a-i\xi v+K$. 

We gather the main results of this section in the following Theorem:

\begin{theorem}[The two low-frequency eigenvalues]
\label{thm:low_frequency_spectrum}
There exist $\vartheta_*>0$, $\xi_*>0$ and constants 
$c_{\rm fr},c_{\rm df}>0$ such that,
for $0<|\xi|<\xi_*$, 
the spectrum of $B(\xi)$ in $|z|\leq\vartheta_*|\xi|$
consists of exactly two simple eigenvalues.
They are real, strictly negative, and so that 
\begin{equation}
z_{\rm fr}(\xi) =-c_{\rm fr}|\xi|^{8/5}+ \smallO(|\xi|^{8/5}), \qquad
z_{\rm df}(\xi) = - c_{\rm df}\xi^2 + \smallO(\xi^2).
\end{equation}
\end{theorem}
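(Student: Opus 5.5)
We work in the symmetrised variable $w=a^{1/2}u$. The eigenvalue problem $B(\xi)u=zu$ then reads
$$(a+z+i\xi v)\,a^{-1/2}w=a^{-1/2}Ka^{-1/2}\,a\,w .$$
Equivalently, setting $\mathcal{K}:=a^{-1/2}Ka^{-1/2}$ and $D_z:=a^{-1}(a+z+i\xi v)$, we must solve $(D_z-\mathcal{K})w=0$ in $L^2_p$. From the weighted Poincar\'e structure, $I-\mathcal{K}$ is self-adjoint and nonnegative, has kernel $\mathscr N=a^{1/2}\ker L$, and has a spectral gap on $\mathsf{Q}L^2_p$.

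We split $w=\mathsf{P}w+\mathsf{Q}w$ and write $D_z-\mathcal K=(I-\mathcal K)+(D_z-I)$, where $D_z-I=a^{-1}(z+i\xi v)$. In the disk $|z|\le\vartheta_*|\xi|$, Lemma \ref{lem:m_small_Disk} gives $|a+z+i\xi v|\gtrsim a+|\xi|$. A cleaner choice is to multiply through and use instead the operator $M_z:=a^{-1/2}(a+z+i\xi v)a^{-1/2}$ and $\mathcal K$ on the relevant weighted space. Since $\mathsf{Q}(I-\mathcal K)\mathsf{Q}$ is coercive, the block $\mathsf{Q}(M_z-\mathcal K)\mathsf{Q}$ is invertible on $\mathsf{Q}L^2$ for small $|\xi|$: its real part is $\ge c-O(\vartheta_*|\xi|\cdot\|\cdot\|)$ after the Lemma's lower bound is used to absorb the edge singularity of $a^{-1}$. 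The Schur complement then reduces the problem to
$$\det S(z,\xi)=0,\qquad S=\mathsf{P}(M_z-\mathcal K)\mathsf{P}-\mathsf{P}M_z\mathsf{Q}\,[\mathsf{Q}(M_z-\mathcal K)\mathsf{Q}]^{-1}\mathsf{Q}M_z\mathsf{P},$$
a $2\times2$ matrix acting on the basis $e_1=a^{1/2}\mathfrak f$, $e_2=a^{1/2}\omega\mathfrak f$. The $z$ with $\det S=0$ in the disk are exactly the eigenvalues there, with matching algebraic multiplicities, because $S$ is analytic in $z$.

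Next we compute the asymptotics of the entries of $S$. Directly, $\langle e_j,M_z e_k\rangle=z\langle\phi_j,\phi_k\rangle+i\xi\langle v\phi_j,\phi_k\rangle$ with $\phi_1=\mathfrak f$ and $\phi_2=\omega\mathfrak f$. The transport term $\langle v\phi_j,\phi_k\rangle$ vanishes by oddness of $v$ under $p\mapsto2\pi-p$. The principal second-order contribution is therefore
$$-\xi^2\big\langle v\phi_j,\,(a+z+i\xi v-K)^{-1}_{\perp}v\phi_k\big\rangle,$$
and the heart of the matter is the behaviour of the resolvent against $v\phi_j$ near the edges. For $j=k=1$ the function $v\mathfrak f$ is nonvanishing at the edges, so the leading piece is
$$\int\frac{|v\mathfrak f|^2}{a+z+i\xi v}\,dp,\qquad a\sim p^{5/3}.$$
Cutting at the scale $p^{5/3}\sim|\xi|$, i.e.\ $p\sim|\xi|^{3/5}$, this integral behaves like $c\,|\xi|^{-2/5}$ plus a convergent bulk part, so that entry is $z\,\|\mathfrak f\|^2-c_1\xi^2|\xi|^{-2/5}=z\|\mathfrak f\|^2-c_1|\xi|^{8/5}$. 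For $j=k=2$ the extra factor $\omega^2\sim p^2$ makes $\int p^2/p^{5/3}$ convergent, which yields $z\|\omega\mathfrak f\|^2-c_2\xi^2$, with $c_2$ given by the convergent integral $\langle v\omega\mathfrak f,(-L)^{-1}v\omega\mathfrak f\rangle$. The off-diagonal entries are $O(\xi^2\cdot|\xi|^{-2/5\cdot\theta})$ for some $\theta<1$, hence small relative to the geometric mean $|\xi|^{9/5}$.

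The step I expect to be the main obstacle is making these asymptotics rigorous uniformly for $|z|\le\vartheta_*|\xi|$. This requires showing that the $K$-correction inside the resolvent does not alter the leading $|\xi|^{8/5}$ coefficient, and that the imaginary and $z$-dependent parts are lower order. I would attack it with a second resolvent expansion around the multiplication part, using the smoothing of $K$ from the Appendix: $Ka^{-1/2}$ maps edge-concentrated functions to bounded ones, so the $K$-terms only see bulk-sized contributions of order $O(\xi^2)$. The same expansion gives the uniform control needed on $z$.

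With the matrix asymptotics in hand, the conclusion is by Rouch\'e's theorem. Rescale $z=|\xi|^{8/5}\zeta$ in a disk $\mathbb D_{\rm fr}$, and $z=\xi^2\zeta$ in $\mathbb D_{\rm df}$. On the boundary of each small disk, $\det S$ is dominated by the diagonal product $(z\|\mathfrak f\|^2-c_1|\xi|^{8/5})(z\|\omega\mathfrak f\|^2-c_2\xi^2)$, which has exactly one zero in each disk. On the remaining part of $\{|z|\le\vartheta_*|\xi|\}$ the product has modulus comparable to $|z|^2$, hence no zeros there. This gives exactly two simple roots, with
$$c_{\rm fr}=\frac{c_1}{\|\mathfrak f\|^2},\qquad c_{\rm df}=\frac{c_2}{\|\omega\mathfrak f\|^2}.$$
For reality, use the symmetry $p\mapsto2\pi-p$: the reflection $R$ satisfies $RB(\xi)R=\overline{B(\xi)}$ in the appropriate sense, so $\overline z$ is an eigenvalue whenever $z$ is. Simplicity together with uniqueness in each symmetric disk then forces $z$ to be real. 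Negativity follows from the asymptotics.
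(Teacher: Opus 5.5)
The architecture of your proposal matches the paper's: the symmetrised variable $w=a^{1/2}u$, a Schur complement onto $\mathscr N$, edge scaling giving $|\xi|^{8/5}$ and $\xi^2$, Rouch\'e's theorem, and reflection for reality. \textbf{The gap is in the step everything rests on: uniform invertibility of the microscopic block, which does not work in your formulation.}

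Your multiplier $M_z=1+a^{-1}(z+i\xi v)$ is unbounded, and $\operatorname{Re}\langle (M_z-\mathscr K)q,q\rangle=\langle (I-\mathscr K)q,q\rangle+\operatorname{Re}z\,\|a^{-1/2}q\|_{L^2_p}^2$. The eigenvalues you want have $\operatorname{Re}z<0$. The second term is then unbounded below on functions concentrated where $a<|\operatorname{Re}z|$, so it cannot be absorbed into the gap $d_*$. The bound $|a+z+i\xi v|\gtrsim a+|\xi|$ of Lemma \ref{lem:m_small_Disk} is a modulus bound coming from the imaginary part $\xi v$, which has opposite signs at the two edges, so it gives no accretivity. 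Indeed, for real $z<0$, averaging the values of $M_z$ at $p$ and $2\pi-p$ gives $1+z/a(p)$, which sweeps the negative half-line. So no real-part or rotated-real-part lower bound exists, and the compact $\mathscr K$ does not change this.

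Worse, your $S$ is not defined. $e_1=a^{1/2}\mathfrak f\notin D(M_z)$, because $(z+i\xi v)a^{-1/2}\mathfrak f\notin L^2_p$ (here $a^{-1}\sim\omega^{-5/3}$ is not integrable). Hence $\mathsf QM_z\mathsf P$ and $\mathsf PM_z\mathsf Q$ are not bounded operators; only the quadratic form $\int(z+i\xi v)\phi_j\phi_k\,dp$ makes sense.

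The missing idea is to divide by the free symbol rather than multiply through. The operator $\mathscr M_\xi(z)=I-\tfrac{a}{a+z+i\xi v}\mathscr K=(I-\mathscr K)+\mathfrak m_\xi(z)\mathscr K$ is bounded. Although $\mathfrak m_\xi(z)$ stays of order one near the edges and tends to zero only strongly, compactness of $\mathscr K$ upgrades this to $\sup_{|z|\le\vartheta_*|\xi|}\|\mathfrak m_\xi(z)\mathscr K\|_{2\to2}\to0$. A Neumann series then gives uniform invertibility of $\mathsf Q\mathscr M_\xi(z)\mathsf Q$, and every Schur-complement entry is built from $\mathfrak m_\xi(z)e_j\in L^2_p$.

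The same compactness is needed for the entry asymptotics, via weak convergence to zero of $|\xi|^{-4/5}\mathfrak m_\xi(z)^*e_{\rm fr}$. Direct bounds only give $\|\mathscr K\mathfrak m_\xi(z)^*e_{\rm fr}\|_{L^2_p}=O(|\xi|^{4/5})$, i.e.\ a $K$-correction of the same order $|\xi|^{8/5}$ as the leading $({\rm fr},{\rm fr})$ term. Your heuristic that $K$-terms ``only see bulk-sized contributions $O(\xi^2)$'' is unsupported: the kernel of $K$ is singular exactly at the edge--edge corners $(p,q)\approx(0,2\pi),(2\pi,0)$, so $K$ carries edge-concentrated inputs to edge outputs.

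This also matters on $\Gamma_{\rm df}$. There the off-diagonal product shifts $z_{\rm df}$ by $\Lambda_{12}\Lambda_{21}/\Lambda_{11}$, so you need it to be $o(|\xi|^{18/5})$. Your bound $O(|\xi|^{2-2\theta/5})$ with merely $\theta<1$ gives this only if $\theta<1/2$.

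The determinant comparison also fails in your non-orthogonal basis. The $z$-part of the $2\times2$ matrix is $zG$ with $G_{12}=\langle\mathfrak f,\omega\mathfrak f\rangle\neq0$. On $\Gamma_{\rm fr}$ the term $z^2G_{12}^2\sim|\xi|^{16/5}$ is as large as your diagonal product, so the circles cannot be shrunk. Consequently $c_{\rm fr}=c_1/\|\mathfrak f\|^2$ is wrong; the correct value is $c_1/(\|\mathfrak f\|^2-\langle\mathfrak f,\omega\mathfrak f\rangle^2/\|\omega\mathfrak f\|^2)$. This is why the paper orthonormalises to $\psi_{\rm fr},\psi_{\rm df}$, making the $z$-part $zI_2$.

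Two smaller points:
\begin{itemize}
\item The second-order correction is $+\xi^2\langle(\cdots)^{-1}v\phi_k,v\phi_j\rangle$, since $(i\xi)(-i\xi)=\xi^2$. With your $-\xi^2$ the roots come out positive, contradicting your negativity claim.
\item Equality between the order of the zero of $\det S$ and the algebraic multiplicity does not follow from analyticity alone. The paper proves simplicity through the rank-one residue of $\mathfrak F_\xi(z)^{-1}$ and the resolvent identity.
\end{itemize}
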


The proof of this Theorem has three main parts:  
(i) We first show that the equation can be
solved when projected on the orthogonal complement of the Kernel $\mathscr{N} =a^{1/2}\ker L$. (ii) We then reduce the remaining eigen-problem to a spectral problem for a two-by-two matrix and locate the zeros of its determinant (two because this is the dimension of $\mathscr{N}$). (iii) We finally justify the passage back to eigenvectors of $B(\xi)$ and provide algebraic simplicity.

With this detailed description of the low-frequency spectrum, in the next section, we will be able to estimate their spectral projections which will be used to prove the linear decay. 
Related reductions appear in the fluid-mode analysis of Ellis-Pinsky and its fractional extensions: see for example 
\cite{ellispinsky1975fluid,bouin2024fractional}.

\subsection{Invertibility on the complement of the collision kernel} 
We remind that $L=-a+K$ is self-adjoint and non-positive on the flat 
$L^2_p$, with
$\ker L=\operatorname{span}\{\mathfrak{f},\omega\mathfrak{f}\}$.

We consider the operator $\mathscr{K} = a^{-1/2}Ka^{-1/2}$ which extends to a compact, self-adjoint
operator on $L^2_p$, 
see \cite[Lemmas 3 and 4]{GLM26}. 
We then define 
$$ \mathscr{N}:=a^{1/2}\ker L=\ker(I-\mathscr{K}).
$$ 
We give a short proof of the identification with $\ker(I-\mathscr{K})$ in 
Appendix \ref{app:appendix_eigenspace}. 
We denote by $\mathsf{P}$ the $L^2_p$- orthogonal projection
onto $\mathscr{N}$, and set $\mathsf{Q}=I-\mathsf{P}$.

Observe that these projections are related to the weighted projection
$\Pi_a$ by
\begin{equation} \label{eq:relation projections}
 \mathsf{P}(a^{1/2}h)=a^{1/2}\Pi_a h,
    \quad \text{ for }\ h \in L^2_p.
\end{equation}
Indeed, $a^{1/2}\Pi_a h$ belongs to $\mathscr{N}$, while
$a^{1/2}(I-\Pi_a)h$ is orthogonal to $\mathscr{N}$ in $L^2_p$.

If we omit $K$, the operator $B(\xi)$ reduces to multiplication by $-a(p)-i\xi v(p)$. Its resolvent is multiplication by 
$(a(p)+z+i\xi v(p))^{-1},$ whenever this exists.  
This is the resolvent of the transport and loss terms.  
The eigen-problem for $B(\xi)$ is
$$ (a+z+i\xi v)u=Ku.
$$ 
Using $K=a^{1/2}\mathscr{K} a^{1/2}$ and setting $w=a^{1/2}u$
it rewrites as $\mathscr{M}_\xi(z)w=0$, where
\begin{equation}
\mathscr{M}_\xi(z)
:=I-\frac{a}{a+z+i\xi v}\mathscr{K}. 
\label{eq:def_M_xi matrix}
\end{equation} 
We also write
\begin{equation}
\mathscr{M}_\xi(z)= [I - \mathscr{K}]  +
\mathfrak{m}_\xi(z)\mathscr{K}, \qquad \text{where }\
\mathfrak{m}_\xi(z,p):=\frac{z+i\xi v(p)}{a(p)+z+i\xi v(p)},  
\label{eq: M = (I-K)_+_pertub}
\end{equation}
so that $\mathscr{M}_\xi(z)$ is written as $I-\mathscr{K}$ plus a perturbation.

The next lemma gives us a disk on which these expressions are well defined and bounded. We will then use compactness of $\mathscr{K}$ to show that the microscopic $\mathsf{Q}$ block remains invertible in this disk.

\begin{lemma} 
\label{lem:m_small_Disk}
There exist $\vartheta_*>0$ and $\xi_*>0$ such that, whenever $0<|\xi|<\xi_*,$ $ |z|\leq\vartheta_*|\xi|$, 
one has
\begin{equation} \label{eq:lowerbound_freedisk}
    |a(p)+z+i\xi v(p)|
    \geq c\bigl(a(p)+|\xi|\bigr),
    \qquad \text{for a.e. }p\in\mathbb T.
\end{equation}
Therefore, 
\begin{equation}
    \sup_{0<|\xi|<\xi_*}
    \sup_{|z|\leq\vartheta_*|\xi|} 
    \left\|\frac{a}{a+z+i\xi v}\right\|_{L^\infty_p}<\infty,
    \label{eq:uniformbound_lemma1}
\end{equation}
and
\begin{equation}
    \sup_{|z|\leq\vartheta_*|\xi|}
    |\mathfrak{m}_\xi(z,p)|
    \lesssim
    \frac{|\xi|}{a(p)+|\xi|}.
    \label{eq: pointwise bound for m}
\end{equation}
In particular, for every fixed $g\in L^2_p$,
\begin{equation}
    \lim_{|\xi|\to0}
    \sup_{|z|\leq\vartheta_*|\xi|}
    \|\mathfrak{m}_\xi(z)g\|_{L^2_p} =0.
    \label{eq:z_uniform_strong conv of m}
\end{equation}
\end{lemma}

\begin{proof}
We choose disjoint endpoint neighborhoods on which $|v(p)|\geq v_*>0$, for the FPUT dispersion relation one may take any
$v_*< 1/2$ after shrinking them.  On their complement there is
$a_*>0$ such that $a(p)\geq a_*$.  Fix
$ 0<\vartheta_*<\frac{v_*}{4}$ 
and write $z=s+i\tau$. Since $|a+ z+ i\xi v| = \sqrt{(a+s)^2+ (\tau + \xi v)^2 }$, either the real or the imaginary part can provide the required lower bound \eqref{eq:lowerbound_freedisk}. 

On an endpoint neighborhood, first suppose that
$a(p)\geq2\vartheta_*|\xi|$.  Then the real part gives 
\[|a(p)+s| \geq a(p)-|s|
    \geq\frac{1}{2}a(p), 
\]
and thus $|a+ z+ i\xi v| \geq \left( \frac{\vartheta_*}{2\vartheta_* + 1}\right) (a+|\xi| )$. 
If instead $a(p)<2\vartheta_*|\xi|$, then the imaginary part gives 
\[ |\tau+\xi v(p)|
    \geq |\xi||v(p)|-|\tau|
    \geq(v_*-\vartheta_*)|\xi|, 
\]
and thus $|a+ z+ i\xi v| \geq \left( \frac{v_* - \vartheta_*}{2\vartheta_* + 1}\right) (a+|\xi|)$.  
On the complement of the endpoint neighborhoods, $a \geq a_*>0$ and 
$|a(p)+s|\geq a(p)/2$ for all sufficiently small $|\xi|$, i.e. $\vartheta_* \xi_* < \alpha_*/2, \xi_*< a_*$. Thus $|a+ z+ i\xi v| \geq \frac{1}{4} (a+|\xi| )$.  
Take the minimum over the three constants to prove \eqref{eq:lowerbound_freedisk} with a constant independent of $\xi_*$.  In particular, the
closed disk is disjoint from the singular curve of the free
multiplier.  The bounds \eqref{eq:uniformbound_lemma1} and
\eqref{eq: pointwise bound for m} follow immediately.  Since
$a(p)>0$ almost everywhere, the right-hand side of
\eqref{eq: pointwise bound for m} converges pointwise to zero and is
uniformly bounded.  Dominated convergence proves
\eqref{eq:z_uniform_strong conv of m}.
This is strong convergence, not operator-norm convergence: near the edges $a(p) \ll |\xi|$, the multiplier $\mathfrak{m}_\xi(z)$ remains of order one.
\end{proof}

The preceding lemma shows that  $\mathfrak{m}_\xi(z)$ converges strongly to zero, uniformly in the central disk.  
Although $\mathfrak{m}_\xi(z)$ is not necessarily small in operator norm, 
its composition with the compact operator $\mathscr{K}$ is.
With this, we may conclude microscopic invertibility, as shown in the lemma below.   

\begin{lemma}[Uniform invertibility of the microscopic block]
\label{lem:W_micro_invertible}
Let $\vartheta_*$ and $\xi_*$ be as in
Lemma~\ref{lem:m_small_Disk}.  On $ 0<|\xi|<\xi_*$ and $|z|\leq\vartheta_*|\xi|$ and for $\xi_*$ sufficiently small, the microscopic block 
\begin{equation}
 \mathscr{W}_\xi(z) :=  \mathsf{Q}\mathscr{M}_\xi(z)\mathsf{Q}
\end{equation}
is invertible on $\mathsf{Q}L^2_p$.  
Moreover,
\begin{equation} \label{eq:W^-1_uniform bound} 
    \sup_{0<|\xi|<\xi_*}
    \sup_{|z|\leq\vartheta_*|\xi|}
    \|\mathscr{W}_\xi(z)^{-1}\|_{2\to2}<\infty.
\end{equation}
\end{lemma}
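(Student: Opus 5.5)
We write $\mathscr{W}_\xi(z)=\mathsf{Q}(I-\mathscr{K})\mathsf{Q}+\mathsf{Q}\,\mathfrak{m}_\xi(z)\mathscr{K}\mathsf{Q}$, using \eqref{eq: M = (I-K)_+_pertub}. We treat the second term as a perturbation that is small in operator norm, and the first as an invertible operator on $\mathsf{Q}L^2_p$.

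\textbf{Step 1 (the unperturbed block).} Since $\mathscr{K}$ is compact and self-adjoint and $\mathscr{N}=\ker(I-\mathscr{K})$, both $\mathscr{N}$ and $\mathscr{N}^\perp=\mathsf{Q}L^2_p$ are invariant under $\mathscr{K}$. Hence $\mathsf{Q}(I-\mathscr{K})\mathsf{Q}$ is $I-\mathscr{K}$ restricted to $\mathscr{N}^\perp$. On this subspace $1$ is not an eigenvalue. The spectrum of a compact self-adjoint operator accumulates only at $0$, so $1$ is an isolated point at positive distance $d>0$ from $\sigma(\mathscr{K}|_{\mathscr{N}^\perp})$. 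By the spectral theorem,
\[
\|(\mathsf{Q}(I-\mathscr{K})\mathsf{Q})^{-1}\|_{\mathsf{Q}L^2_p\to \mathsf{Q}L^2_p}\le d^{-1}.
\]

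\textbf{Step 2 (smallness of the perturbation).} This is the main step: we must show
\[
\varepsilon(\xi):=\sup_{|z|\le\vartheta_*|\xi|}\|\mathfrak{m}_\xi(z)\mathscr{K}\|_{2\to2}\to0 \quad \text{as } |\xi|\to0.
\]
Lemma~\ref{lem:m_small_Disk} only gives strong convergence of $\mathfrak{m}_\xi(z)$, and this is not uniform in $z$ at the level of individual multipliers. We therefore use a $z$-uniform majorant instead. Define
\[
\mu_\xi(p):=\frac{C|\xi|}{a(p)+|\xi|}.
\]
By \eqref{eq: pointwise bound for m}, $|\mathfrak{m}_\xi(z,p)|\le \mu_\xi(p)$ for all $|z|\le\vartheta_*|\xi|$, so $\|\mathfrak{m}_\xi(z)\mathscr{K}h\|_2\le\|\mu_\xi\,\mathscr{K}h\|_2$. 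Also $\mu_\xi\le C$ and $\mu_\xi\to0$ a.e., since $a>0$ a.e.

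Now use compactness. Fix $\eta>0$ and approximate $\mathscr{K}$ by a finite-rank operator $F=\sum_{j=1}^N\langle\cdot,e_j\rangle f_j$ with $\|\mathscr{K}-F\|<\eta$. Then
\[
\|\mu_\xi\mathscr{K}\|\le C\eta+\sum_{j=1}^N\|e_j\|_2\,\|\mu_\xi f_j\|_2 .
\]
By dominated convergence, $\|\mu_\xi f_j\|_2\to0$ for each $j$. Hence $\limsup_{\xi\to0}\varepsilon(\xi)\le C\eta$, and since $\eta$ is arbitrary, $\varepsilon(\xi)\to0$. The same bound holds after composing with the contractions $\mathsf{Q}$ on either side.

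\textbf{Step 3 (Neumann series).} Shrink $\xi_*$ so that $\varepsilon(\xi)\le d/2$ for $0<|\xi|<\xi_*$. Write
\[
\mathscr{W}_\xi(z)=A\bigl(I+A^{-1}\mathsf{Q}\mathfrak{m}_\xi\mathscr{K}\mathsf{Q}\bigr), \qquad A=\mathsf{Q}(I-\mathscr{K})\mathsf{Q}.
\]
Here $\|A^{-1}\mathsf{Q}\mathfrak{m}_\xi\mathscr{K}\mathsf{Q}\|\le 1/2$. A Neumann series then shows $\mathscr{W}_\xi(z)$ is invertible on $\mathsf{Q}L^2_p$, with $\|\mathscr{W}_\xi(z)^{-1}\|\le 2/d$ uniformly in $\xi$ and $z$. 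This is \eqref{eq:W^-1_uniform bound}.
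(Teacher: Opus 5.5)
Your proof is correct and follows the paper's argument essentially step by step. Both use the spectral gap of the compact self-adjoint $\mathscr{K}$ on the reducing subspace $\mathsf{Q}L^2_p$, then operator-norm smallness of $\mathfrak{m}_\xi(z)\mathscr{K}$ via finite-rank approximation and dominated convergence, and finally a Neumann series giving the uniform bound $2/d$. Two side remarks. Your $z$-uniform majorant $\mu_\xi$ is exactly how Lemma~\ref{lem:m_small_Disk} proves \eqref{eq:z_uniform_strong conv of m}, and that statement is already uniform in $z$, so your caveat about the lemma is unnecessary. The nonnegativity of $I-\mathscr{K}$, which the paper records, is not needed for invertibility here.
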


\begin{proof}
\underline{Invertibility for $\xi=z=0$.} 
Since $\mathscr{K}$ is compact and self-adjoint and its eigenspace at
one is exactly $\mathscr{N}$,  
$$ d_*:=\operatorname{dist}
\big( 1,\sigma(\mathscr{K})\setminus\{1\} \big)
$$ 
is strictly positive: every nonzero spectral point of a compact
self-adjoint operator is isolated, and the only possible accumulation
point of its spectrum is zero, by the spectral theorem for compact self-adjoint operators. The spectral projection $\mathsf{P}$
commutes with $\mathscr{K}$, so $\mathsf{Q}L^2_p$ reduces $\mathscr{K}$
and
\begin{equation} 
\label{eq:gap_for_micro: Q(I-K)Q}
\|\mathsf{Q}(I-\mathscr{K})\mathsf{Q}g\|_{L^2_p}
    \geq d_*\|\mathsf{Q}g\|_{L^2_p}.
\end{equation} 
Moreover, $I-\mathscr{K}$ is nonnegative: Indeed, for every
$h\in L^2_p$, the identity $K=a^{1/2}\mathscr{K} a^{1/2}$ gives
\begin{equation} \label{eq:positive_micro_block}
\big \langle (I-\mathscr{K}) a^{1/2}h, a^{1/2}h \big \rangle_{L^2_p}
= - \langle L h, h \rangle_{L^2_p} \geq 0.
\end{equation} 
Since $a>0$ almost everywhere, multiplication by $a^{1/2}$
has dense range in $L^2_p$. Boundedness of $[I-\mathscr{K}]$
then extends this positivity to all of $L^2_p$.
 Thus, the self-adjoint operator
$$  \mathscr{A}:=\mathsf{Q}(I-\mathscr{K})\mathsf{Q} \quad\text{ on }\ \mathsf{Q}L^2_p
$$
satisfies $\mathscr{A} \geq d_* I_{\mathsf{Q}}$, where $I_{\mathsf{Q}}$ is the identity on 
$\mathsf{Q} L^2_p$.  
 In particular, it is invertible on $\mathsf{Q}L^2_p$, with 
$ \|\mathscr{A}^{-1}\|_{\mathsf{Q}L^2_p\to\mathsf{Q}L^2_p}
    \leq d_*^{-1}.$

\underline{Smallness of the perturbation.} We next use compactness of $\mathscr{K}$ to get convergence in operator-norm after composition with $\mathscr{K}$. For a fixed $\delta>0$, since $\mathscr{K}$ is compact, there exists a
finite-rank operator $\mathscr{K}_\delta$ such that
$$ \|\mathscr{K}-\mathscr{K}_\delta\|_{2\to2}<\delta.
$$ 
By \eqref{eq:uniformbound_lemma1} in Lemma \ref{lem:m_small_Disk}, the family $\mathfrak{m}_\xi(z)$ is
uniformly bounded for $  0<|\xi|<\xi_*,
$ and $|z|\leq\vartheta_*|\xi|$.   
Moreover, since $\operatorname{Ran}\mathscr{K}_\delta$ is
finite-dimensional, from the uniform strong convergence
\eqref{eq:z_uniform_strong conv of m} we get 
$$ \lim_{|\xi|\to 0} \sup_{|z|\leq\vartheta_*|\xi|}
 \|\mathfrak{m}_\xi(z) \mathscr{K}_\delta\|_{2 \to2} = 0.
$$ 
Therefore altogether, 
\begin{align} 
\sup_{|z|\leq\vartheta_*|\xi|}
\|\mathfrak{m}_\xi(z)\mathscr{K}\|_{2 \to 2} \leq 
C \|\mathscr{K} -\mathscr{K}_\delta\|_{2 \to 2} +
\sup_{|z|\leq\vartheta_*|\xi|}
\|\mathfrak{m}_\xi(z)\mathscr{K}_\delta\|_{2\to2}.
\end{align} 
Taking first $|\xi|\to 0$ and then $\delta\to 0$, we obtain
\begin{equation} 
\label{eq:strong converg_compactK}
\lim_{|\xi|\to 0} \sup_{|z|\leq \vartheta_*|\xi|}
 \|\mathfrak{m}_\xi(z)\mathscr{K}\|_{2 \to 2}= 0.
\end{equation}
 
\underline{Inversion by Neumann series.} Using the decomposition  \eqref{eq: M = (I-K)_+_pertub}, we  write
$ \mathscr{W}_\xi(z) = \mathscr{A} +
\mathsf{Q}\mathfrak{m}_\xi(z)\mathscr{K}\mathsf{Q}.
$ 
By the preceding estimates, for $\xi_*$ sufficiently small we have 
$$ 
\sup_{|z|\leq\vartheta_*|\xi|} \left\|
\mathscr{A}^{-1} \mathsf{Q}\mathfrak{m}_\xi(z)\mathscr{K}\mathsf{Q} \right\|
\leq \frac{1}{2}. 
$$ 
Therefore by Neumann series we have 
\begin{align*}
\mathscr{W}_\xi(z)^{-1} =  \sum_{n=0}^{\infty}
\left[ -\mathscr{A}^{-1}
\mathsf{Q}\mathfrak{m}_\xi(z)\mathscr{K}\mathsf{Q}\right]^n
 \mathscr{A}^{-1},
\end{align*} 
and so $\|\mathscr{W}_\xi(z)^{-1}\|_{2 \to2} \leq 2 d_*^{-1}.$ which proves the claim \eqref{eq:W^-1_uniform bound}.
\end{proof}

\subsection{Reduction of spectral problem  to a two-by-two matrix} 
We now eliminate the microscopic component using the inverse constructed above. This reduces the original spectral problem to an exact equation only on the two-dimensional space $\mathscr{N}$.  
Recall that
$$ \mathscr{M}_\xi(z) = [I-\mathscr{K}] + \mathfrak{m}_\xi(z)\mathscr{K}.
$$

If we  decompose $w = \mathsf{P} w + \mathsf{Q} w= :\zeta + q$, the equation $\mathscr{M}_\xi(z)w=f$, on $L^2_p = \mathsf{P}L^2_p \oplus \mathsf{Q}L^2_p$,  is equivalent to
\begin{align} \label{eq:eigenproblem_decomposed}
 q  = \mathscr{W}_\xi(z)^{-1}
  \big(\mathsf{Q}f-\mathsf{Q}\mathscr{M}_\xi(z)\zeta \big),\quad \text{ and } \quad \mathscr{F}_\xi(z)\zeta = \mathsf{P}f-
   \mathsf{P}
   \mathscr{M}_\xi(z)\mathsf{Q}
 \mathscr{W}_\xi(z)^{-1}\mathsf{Q}f, 
\end{align} 
where the operator on $\mathscr{N}$ is
\begin{equation}
\mathscr F_\xi(z):=
\mathsf P\mathscr M_\xi(z)\mathsf P
-\mathsf P\mathscr M_\xi(z)\mathsf Q\mathscr W_\xi(z)^{-1}
 \mathsf Q\mathscr M_\xi(z)\mathsf P.
\label{eq:Feshbach}
\end{equation}
This is the Schur complement, also called the Feshbach operator, 
see for example, \cite[Theorem 1.2]{dusson2021feshbach}. For the related formulation in terms of Grushin problems, see \cite[Appendix~D]{zworski2012semiclassical}.

In total the following equivalencies hold: 

\begin{equation}\label{eq:equiv_invertibility}
\begin{split}
(z-B(\xi))\text{ is invertible on }L^2_p 
\quad &\Longleftrightarrow\quad \mathscr{M}_\xi(z) \text{ is invertible on }L^2_p
\quad \\
& \hspace{0cm} \Longleftrightarrow\quad
\mathscr F_\xi(z)\text{ is invertible on }\mathscr{N}.
\end{split}
\end{equation}
We can therefore determine the spectrum of $B(\xi)$ in the central disk by
studying $\mathscr{F}_\xi(z)$ on the two-dimensional space $\mathscr{N} = a^{1/2}\ker L$.  

To see \eqref{eq:equiv_invertibility}:  
The two equations in \eqref{eq:eigenproblem_decomposed} show the second equivalence.  
Indeed, if  $\mathscr{F}_\xi(z)$ is invertible, 
$\zeta = \mathsf{P}\zeta$ is uniquely determined for each $f \in L^2_p$. The first formula determines $q=\mathsf{Q} q$, thus $w$ is uniquely determined. This means that $\mathscr{M}_\xi(z)$ is invertible.  
If $\mathscr{M}_\xi(z)$ is invertible, then $w$ is uniquely determined for all $f$. If $f \in \mathsf{P}L^2_p$, then $\mathsf{Q} f =0$, so the reduced equation is $\mathscr{F}_\xi(z) \zeta =f$. From the equation for $q$ we also get that $\zeta$ is uniquely determined and thus altogether imply that   $\mathscr{F}_\xi(z)$ is invertible.

In order to justify the first equivalence in \eqref{eq:equiv_invertibility}:  we use the identity $(I-AB)^{-1}=I+A(I-BA)^{-1}B$, applied to
the (bounded) factors $R(z)a^{1/2} \mathscr{K}$ and $a^{1/2}$, for 
$R(z)=(a+z+i\xi v)^{-1}$. This gives 
\begin{equation}
 (z-B(\xi))^{-1} = R(z)+R(z)a^{1/2} \mathscr{K} \mathscr{M}_\xi(z)^{-1} a^{1/2} R(z).
    \label{eq:bounded_physical_resolvent}
\end{equation} 
This proves that indeed  $z-B(\xi)$ is invertible if and only if
$\mathscr{M}_\xi(z)$ is invertible.

\subsection{Choice of basis and the matrix equation.} 
First note that since $\mathsf{P}$ is the spectral projection of $\mathscr{K}$ associated with the eigenvalue one, 
$ \mathscr{K}\mathsf{P} =\mathsf{P}\mathscr{K}
    =  \mathsf{P}, $ and so 
$ (I-\mathscr{K})\mathsf{P}=\mathsf{P}(I-\mathscr{K})=0$.  
It follows that
 $\mathsf{P}\mathscr{M}_\xi(z)\mathsf{P}
    = \mathsf{P} \mathfrak{m}_\xi(z)\mathsf{P}$,  
 $\mathsf{P}\mathscr{M}_\xi(z)\mathsf{Q} =
 \mathsf{P} \mathfrak{m}_\xi(z)\mathscr{K}\mathsf{Q}$, and 
 $\mathsf{Q}\mathscr{M}_\xi(z)\mathsf{P}
 = \mathsf{Q} \mathfrak{m}_\xi(z)\mathsf{P}$.  Thus the Schur complement \eqref{eq:Feshbach} simplifies into 
\begin{equation}
\mathscr{F}_\xi(z) =\mathsf{P} \mathfrak{m}_\xi(z)\mathsf{P}- \mathsf{P} \mathfrak{m}_\xi(z)\mathscr{K}\mathsf{Q}
\mathscr{W}_\xi(z)^{-1}\mathsf{Q}  \mathfrak{m}_\xi(z)\mathsf{P}.
\end{equation}

We will now choose a basis of $\mathscr{N}$ and write the reduced equation $\mathscr{F}_\xi (z) \zeta= 0$ as a system for the two coefficients of $\zeta$ in this basis.

Recall that $\ker L=\operatorname{span}\{\mathfrak{f},\omega\mathfrak{f}\}$. 
One of these functions, $\omega\mathfrak{f}$, vanishes at the momentum edges, while the other does not. To preserve this distinction in an
orthonormal basis, we set 
\begin{align*}
\psi_{\rm df}:= \frac{\omega\mathfrak{f}}{\|\omega\mathfrak{f}\|_{L^2_p}}, \qquad 
 \psi_{\rm fr}:= \frac{\mathfrak{f}-\langle \mathfrak{f}, \psi_{\rm df}\rangle \psi_{\rm df}}{
\|\mathfrak{f} - \langle\mathfrak{f},\psi_{\rm df}\rangle \psi_{\rm df}\|_{L^2_p}}.
\end{align*}

The subscripts ${\rm fr}$ and ${\rm df}$ here express the \emph{fractional} and \emph{diffusive} eigenvalues that we will find and locate precisely later in this section. They have leading orders $|\xi|^{8/5}$ and $\xi^2$, respectively, thus the naming.

The resulting basis $\{\psi_{\rm fr},\psi_{\rm df}\}$ is real and invariant under reflection $p\mapsto 2\pi- p$. Moreover, $\psi_{\rm fr}(0)>0$, and 
\begin{equation} 
\psi_{\rm fr}(p)\sim 1, \qquad \psi_{\rm df}(p)\sim\omega(p) 
\end{equation} at both edges $p\to 0,2\pi$. To pass from $\ker L$ to the weighted space $\mathscr{N}:= a^{1/2}\ker L $, 
define  $J:\mathbb{C}^2 \longrightarrow \mathscr{N}$ by 
$$  J(c_1,c_2) :=c_1 e_{\rm fr} + c_2 e_{\rm df},\quad \text{ where }\quad  e_{\rm fr}:=a^{1/2}\psi_{\rm fr},\ e_{\rm df}:=a^{1/2}\psi_{\rm df}. $$  

The vectors $e_{\rm fr}, e_{\rm df}$ form now a basis of 
$\mathscr{N}$, although they need not be orthonormal. Now define the two-by-two pairing matrix of the Feshbach operator 
\begin{align} \label{eq:Feshbachmatrix_def}
    \mathfrak{F}_\xi(z)
    &:=J^*\mathscr{F}_\xi(z)J, \qquad
    \bigl[\mathfrak{F}_\xi(z)\big]_{ij}
    := \left\langle
        \mathscr{F}_\xi(z)e_j,e_i
    \right\rangle_{L^2_p},
    \quad i,j\in\{\mathrm{fr},\mathrm{df}\}.
\end{align}

More explicitly, the Feshbach formula gives 
\begin{equation} \label{eq:Feshback_pairing matrix_entries}
\big[\mathfrak{F}_\xi(z)\big]_{ij}
= \langle\mathfrak{m}_\xi(z)e_j,e_i \rangle -
\left\langle \mathfrak{m}_\xi(z)\mathscr{K}\mathsf{Q}
 \mathscr{W}_\xi(z)^{-1}
\mathsf{Q}\mathfrak{m}_\xi(z)e_j, e_i
    \right\rangle, 
\end{equation}
where the second term accounts for the microscopic component. 
Since $J$ and $J^*$ are invertible between the two-dimensional
spaces $\mathbb C^2$
and $\mathscr{N}$, writing $h=J c$ gives
$$ \mathscr{F}_\xi(z)h=0 \quad\Longleftrightarrow\quad
  \mathfrak{F}_\xi(z) c = 0.
$$
Thus, within the central disk, we have that $z \in \sigma(B(\xi)) $ if and only if $\det\mathfrak{F}_\xi(z)=0$.  
Whenever the inverses exist, we will also use the identity 
$ 
\mathscr{F}_\xi(z)^{-1} = J \mathfrak{F}_\xi(z)^{-1} J^*$.

\subsection{Estimate of the entries in \eqref{eq:Feshback_pairing matrix_entries} as $|\xi|\to 0$}    
To estimate the matrix entries, we need the precise behavior
of $a(p)$ near the endpoints. The asymptotics $a(p)\sim\omega(p)^{5/3}$ is established in \cite{LukkSpohn} and 
\cite[Lemma 18]{GLM26}.  
In order to determine the leading coefficient of the fractional branch,
we use the sharper limit from
Lemma \ref{lem:sharp_a_limit_appendix} which proof is given in the Appendix \ref{app:sharp_collision_frequency}, stating that 
\begin{equation}
\begin{split}
&a(p)=a_0\ \omega(p)^{5/3}(1+ \smallO(1)), \quad a_0>0, \\
 &v(p)\to \frac{1}{2} \quad \text{as }p\to 0^+, \quad
v(p)\to - \frac{1}{2} 
\quad\text{as }p\to 2\pi^-.
\end{split}
\label{eq:sharp_edgebehaviour_a(p)}
\end{equation}
The same coefficient $a_0$ occurs at both endpoints by reflection symmetry.

Heuristically, at the diagonal contribution at $z=0$ and looking at the first term in \eqref{eq:Feshback_pairing matrix_entries}, we see that 
$$\langle \mathfrak{m}_\xi (0) e_j,e_j \rangle = |\xi|^2 \int_{\mathbb{T}} \frac{|\psi_j(p)|^2 |v(p)|^2 a(p)}{a(p)^2 + \xi^2v(p)^2} d p,
$$ 
due to reflection symmetry.  
Using the asymptotics in \eqref{eq:sharp_edgebehaviour_a(p)}: First, the fractional component, $j={\rm fr}$, gives a term of order $\mathcal{O}(|\xi|^{8/5})$, which we see by splitting the integral in the regions: $0<p<|\xi|^{3/5}$, $p\in (|\xi|^{3/5}, p_0)$ for fixed $p_0$, and a fixed bulk and using that $\psi_{\rm fr}\sim 1$.   This integral splitting is determined by the two terms in the denominator. 

While, for the diffusive component, $j={\rm df}$, we rather get a contribution of order $\mathcal{O}(|\xi|^{2})$, since $\psi_{\rm df}\sim \omega$. To see this, we split again the integral in the same three regions, and we check that the coefficient of $|\xi|^2$ is integrable. 

Thus we expect \textbf{two different orders/scales to appear} in the matrix expansion!     
The next lemma establishes the corresponding expansions for the full matrix, including the microscopic correction and the mixed (off-diagonal) coordinates.  

\begin{lemma}
\label{lem:matrix expansion_xi small}
There exist constants
$c_{\rm fr}, c_{\rm df} >0$ 
such that the following asymptotics hold:  
\begin{itemize}
 \item[(i)] Uniformly  for $|z|\lesssim|\xi|^{8/5}$,
\begin{equation}  \label{eq:matrix_expansion}
 \mathfrak{F}_\xi(z) =
\begin{bmatrix}
 z+c_{\rm fr}|\xi|^{8/5}
 + \smallO(|\xi|^{8/5}) &
     \mathcal{O}(|\xi|^{9/5})
\\
 \mathcal{O}(|\xi|^{9/5}) &
z+\mathcal{O}(\xi^2) 
\end{bmatrix},
\end{equation}
and 
\begin{equation} \label{eq:matrix_cross-product}
\big[\mathfrak{F}_\xi(z)\big]_{\rm fr,df} 
\big[\mathfrak{F}_\xi(z)\big]_{\rm df,fr} =\smallO(|\xi|^{18/5}).
\end{equation} 
\item[(ii)] Uniformly, on the smaller region $|z|\lesssim\xi^2$, the diffusive
diagonal has the sharper expansion
\begin{equation} \label{eq: df,df entry_sharper expansion}
[\mathfrak F_\xi(z)]_{\rm df,df} = z + c_{\rm df}\xi^2+ \smallO(\xi^2).
\end{equation}
\item[(iii)] Finally, on $|z|=\vartheta_*|\xi|$,
\begin{equation} \label{eq:Outer_expansion}
\mathfrak{F}_\xi(z) = z I_2 + \smallO(|\xi|).
\end{equation}
\end{itemize}
\end{lemma}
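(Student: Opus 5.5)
The plan is to expand each entry of \eqref{eq:Feshback_pairing matrix_entries} explicitly, using the reflection symmetry $p\mapsto 2\pi-p$ to remove terms that are odd under it. Under this reflection $v$ is odd, while $a$, $\psi_{\rm fr}$, $\psi_{\rm df}$ are even.

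\emph{First (macroscopic) term.} Since $e_j=a^{1/2}\psi_j$, I write
$$\frac{a(z+i\xi v)}{a+z+i\xi v}=(z+i\xi v)-\frac{(z+i\xi v)^2}{a+z+i\xi v}.$$
Orthonormality of $\{\psi_{\rm fr},\psi_{\rm df}\}$ in $L^2_p$ then gives
$$\langle \mathfrak m_\xi(z)e_j,e_i\rangle=z\delta_{ij}+i\xi\langle v\psi_j,\psi_i\rangle-\int_{\mathbb T}\frac{(z+i\xi v)^2}{a+z+i\xi v}\psi_j\psi_i\,dp .$$
The middle term vanishes by oddness of $v$. At $z=0$ the last term equals $\xi^2\int v^2\psi_i\psi_j\, a/(a^2+\xi^2v^2)\,dp$, because the imaginary part is odd. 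For $j=i={\rm fr}$, I rescale $p=|\xi|^{3/5}s$ near each edge and use \eqref{eq:sharp_edgebehaviour_a(p)}. This produces $|\xi|^{8/5}$ times an explicit convergent integral in $s$; the integrand decays like $s^{-5/3}$, so the integral converges at infinity. Dominated convergence then identifies the leading term of $c_{\rm fr}$, and the bulk contributes only $O(\xi^2)$. For $i=j={\rm df}$, $\psi_{\rm df}^2/a\sim p^{1/3}$ is integrable, so dominated convergence gives $\xi^2\int v^2\psi_{\rm df}^2a^{-1}\,dp+\smallO(\xi^2)$. For the off-diagonal entry the integrand is $\lesssim \xi^2 p^{-2/3}$, hence $O(\xi^2)=\smallO(|\xi|^{9/5})$.

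\emph{Dependence on $z$.} Replacing $0$ by $z$ changes the numerator by $z^2+2i\xi v z$ and perturbs the denominator by a relative factor $|z|/(a+|\xi|)$, which is controlled by Lemma~\ref{lem:m_small_Disk}. For $|z|\lesssim|\xi|^{8/5}$ this costs at most $|z|\,|\xi|\int\psi_j^2/(a+|\xi|)\,dp\lesssim |z|\,|\xi|^{3/5}$, which is $\smallO(|\xi|^{8/5})$; on $|z|\lesssim\xi^2$ it is $\smallO(\xi^2)$. On $|z|=\vartheta_*|\xi|$ the same bounds give $O(|\xi|^{8/5})=\smallO(|\xi|)$, which yields (iii).

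\emph{Microscopic correction.} I rewrite the correction term as
$$\langle \mathscr W_\xi(z)^{-1}\mathsf Q\mathfrak m_\xi(z)e_j,\ \mathsf Q\mathscr K\,\overline{\mathfrak m_\xi(z)}e_i\rangle$$
and use \eqref{eq:W^-1_uniform bound}. Directly I get $\|\mathfrak m_\xi e_{\rm fr}\|_{L^2}\lesssim|\xi|^{4/5}$ and $\|\mathfrak m_\xi e_{\rm df}\|_{L^2}\lesssim|\xi|$. A crude bound would only give $O(|\xi|^{8/5})$ in the $({\rm fr},{\rm fr})$ entry, and this is the main obstacle. I would overcome it with compactness: $|\xi|^{-4/5}\mathfrak m_\xi e_{\rm fr}$ is bounded in $L^2_p$ and concentrates in the edge layer $p\lesssim|\xi|^{3/5}$, so it converges weakly to $0$. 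Compactness of $\mathscr K$ then makes $\|\mathscr K\,\overline{\mathfrak m_\xi}e_{\rm fr}\|=\smallO(|\xi|^{4/5})$. Hence the $({\rm fr},{\rm fr})$ correction is $\smallO(|\xi|^{8/5})$, and both off-diagonal corrections are $\smallO(|\xi|^{9/5})$. This gives \eqref{eq:matrix_expansion}, and since both off-diagonal entries are $\smallO(|\xi|^{9/5})$, it also gives \eqref{eq:matrix_cross-product}.

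\emph{The constant $c_{\rm df}$.} For $j={\rm df}$, $\xi^{-1}\mathfrak m_\xi(z)e_{\rm df}\to i\eta$ strongly on $|z|\lesssim\xi^2$, where $\eta:=v\psi_{\rm df}a^{-1/2}\in L^2_p$ (dominated convergence, using $p^2/a\in L^1$). Passing to the limit shows that $c_{\rm df}$ is the limit of $\|\eta\|^2$ minus the correction, i.e. $c_{\rm df}=\|\eta\|^2-\langle\mathscr A^{-1}\mathsf Q\eta,\mathsf Q\mathscr K\eta\rangle$. Now $\eta$ is odd, so $\mathsf P\eta=0$. Using $\mathsf Q\mathscr K\mathsf Q=\mathsf Q-\mathscr A$, this gives
$$c_{\rm df}=\langle\mathscr A^{-1}\eta,\eta\rangle\geq \|\mathscr A\|^{-1}\|\eta\|^2>0,$$
which proves (ii) (up to checking signs carefully). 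The same symmetry argument, applied to the strictly positive explicit edge integral, gives $c_{\rm fr}>0$.
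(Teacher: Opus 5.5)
Your route is essentially the paper's. It uses the same splitting $a\mathfrak{m}_\xi=(z+i\xi v)-(z+i\xi v)^2/(a+z+i\xi v)$ with reflection symmetry, the edge rescaling $\omega\sim|\xi|^{3/5}$ for $c_{\rm fr}$, the weak-convergence/compactness argument for the $({\rm fr},{\rm fr})$ correction, and $c_{\rm df}=\langle\mathscr{A}^{-1}\eta,\eta\rangle$ with $\eta=v\psi_{\rm df}a^{-1/2}=\mathsf{Q}\eta$. Your explicit $\mathcal{O}(|z|\,|\xi|^{3/5})$ bound on the $z$-dependence is a slightly more quantitative substitute for the paper's dominated-convergence step giving $z(1+\smallO(1))$.

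The one step that is wrong as written is your intermediate formula for $c_{\rm df}$. Since $c_{\rm df}>0$ is part of the statement, this matters.

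\emph{The sign.} In the second slot you have $\overline{\mathfrak{m}_\xi}e_{\rm df}=-i\xi\eta+\smallO(|\xi|)$, and the inner product is conjugate-linear there. So the correction pairing equals $(i\xi)\overline{(-i\xi)}\langle\mathscr{A}^{-1}\eta,\mathsf{Q}\mathscr{K}\eta\rangle+\smallO(\xi^2)=-\xi^2\langle\mathscr{A}^{-1}\eta,\mathsf{Q}\mathscr{K}\eta\rangle+\smallO(\xi^2)$. Together with the minus sign in front of the Feshbach correction, this gives $c_{\rm df}=\|\eta\|^2+\langle\mathscr{A}^{-1}\eta,\mathsf{Q}\mathscr{K}\eta\rangle$. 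Only with this sign does $\mathsf{Q}\mathscr{K}\eta=\eta-\mathscr{A}\eta$ produce $\langle\mathscr{A}^{-1}\eta,\eta\rangle$. From the formula you display, the same identity yields $2\|\eta\|^2-\langle\mathscr{A}^{-1}\eta,\eta\rangle$, which has no evident sign.

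\emph{Two smaller points.}
\begin{itemize}
\item Passing to the limit in the $({\rm df},{\rm df})$ correction also uses $\mathscr{W}_\xi(z)^{-1}\to\mathscr{A}^{-1}$; strong convergence suffices. This follows from $\mathscr{W}_\xi(z)^{-1}-\mathscr{A}^{-1}=-\mathscr{W}_\xi(z)^{-1}\mathsf{Q}\mathfrak{m}_\xi(z)\mathscr{K}\mathsf{Q}\mathscr{A}^{-1}$ and Lemma~\ref{lem:m_small_Disk}.
\item The compactness bound on $\mathscr{K}\overline{\mathfrak{m}_\xi}e_{\rm fr}$ only makes the correction in $[\mathfrak{F}_\xi(z)]_{\rm fr,df}$ little-$o$. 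For $[\mathfrak{F}_\xi(z)]_{\rm df,fr}$ your estimates give only $\mathcal{O}(|\xi|^{4/5})\cdot\mathcal{O}(|\xi|)$, so your claim that both off-diagonal corrections are $\smallO(|\xi|^{9/5})$ is not justified. This is harmless: (i) asks only for $\mathcal{O}(|\xi|^{9/5})$, and one little-$o$ factor already gives \eqref{eq:matrix_cross-product}, exactly as in the paper.
\end{itemize}
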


\begin{proof}[Proof of Lemma \ref{lem:matrix expansion_xi small}]

We start with \underline{some first upper bound estimates} for the matrix entries. 
We recall that the entries are given by 
\begin{equation}\label{eq:entries for Feshbach_formula}
[\mathfrak{F}_\xi(z)]_{ij}
=\langle\mathfrak{m}_\xi(z)e_j,e_i\rangle  -\left\langle\mathfrak{m}_\xi(z)\mathscr{K}\mathsf{Q} \mathscr{W}_\xi(z)^{-1}\mathsf{Q}\mathfrak{m}_\xi(z)e_j,e_i\right\rangle.
\end{equation}
We first compute 
\begin{align}
& \|\mathfrak{m}_\xi(z)e_{\rm fr}\|_{L^2_p}^2
\lesssim\int_{a\leq |\xi| }a\ dp
     +|\xi|^2\int_{a >|\xi|}\frac{1}{a}\  dp
 \lesssim |\xi|^{8/5},
\quad \text{ and} \\  &\|\mathfrak{m}_\xi(z)e_{\rm df}\|_{L^2_p}^2 
\lesssim\int_{a\leq |\xi|}a\omega^2 dp  +|\xi|^2\int_{a>|\xi|}\frac{\omega^2}{a} dp
 \lesssim |\xi|^2.
\end{align} 
The same bounds hold for
$\|\mathfrak m_\xi(z)^*e_j\|_{L^2_p}$. For example, the two integrals in the first line are estimated as
$\int_0^{|\xi|^{3/5}}p^{5/3}\ dp= \mathcal{O}(|\xi|^{8/5})$ and
$|\xi|^2\int_{|\xi|^{3/5}}^1 p^{-5/3}\ dp= \mathcal{O}(|\xi|^{8/5})$.
In the second line, $\omega^2/a\sim\omega^{1/3}$ is integrable and thus the integral contributes of order $|\xi|^2$, while the first term is even faster, thus the leading order is $\mathcal{O}(|\xi|^2)$. 

It is now handy to write 
$$ 
a\mathfrak{m}_\xi(z)
=z+i\xi v-\frac{(z+i\xi v)^2}{a+z+i\xi v}. 
$$
Then due to orthonormality and that $v(p) = -v(2\pi - p)$ and $\psi_j(p) = \psi_j(2\pi - p)$, it holds $\langle \psi_j,\psi_i \rangle= \delta_{ij}$,  and  
$\int v \psi_j \psi_i dp=0$. So we have 
\begin{equation} 
\langle\mathfrak{m}_\xi(z)e_j,e_i
\rangle 
=z\delta_{ij}
-\int_{\mathbb{T}}\frac{(z+i\xi v)^2}{a+z+i\xi v}
    \psi_j\psi_i dp.
\label{eq:first_term of (F_xi)ij}
\end{equation}

Throughout the disk $|z|\leq\vartheta_* |\xi| $, Lemma \ref{lem:m_small_Disk} gives the lower bound $|a+ z +i\xi v|\gtrsim a+|\xi|$ and that 
$|\mathfrak{m}_\xi(z)|\lesssim\frac{|\xi|}{a+|\xi|}$. 
Looking at the denominator and since $a\sim \omega^{5/3}$, the split $a \leq |\xi|$ corresponds to $\omega \lesssim |\xi|^{3/5}$.  
Then, for the integral in \eqref{eq:first_term of (F_xi)ij}, we obtain
$$
|\xi|^2
\int_{\mathbb{T}}\frac{|\psi_i\psi_j|}{a+|\xi|} dp 
\lesssim 
\begin{cases}
|\xi|^{8/5},&i=j={\rm fr},\\
|\xi|^2,&\text{otherwise}.
\end{cases} 
$$
The first bound follows by the same split as above and since $\psi_{\rm fr} \sim 1$. For the mixed
entries, $|\psi_{\rm fr}\psi_{\rm df}|/a\lesssim \omega^{-2/3}$
which is integrable. Finally for the diffusive diagonal we also use that $\omega^2/a$ is integrable, and thus the leading term is $\mathcal{O}(|\xi|^2)$.  
Regarding the second term in \eqref{eq:entries for Feshbach_formula} we estimate 
\begin{equation}\label{eq: micro K_part of m_estimate1}
\begin{split}
&\left|\left\langle\mathfrak{m}_\xi(z)\mathscr{K}\mathsf{Q}
\mathscr{W}_\xi(z)^{-1}\mathsf{Q}\mathfrak{m}_\xi(z)e_j,e_i
 \right\rangle
 \right|\\
&\qquad
\lesssim
 \|\mathfrak{m}_\xi(z)e_j\|_{L^2_p}  \|\mathscr{K}\mathfrak{m}_\xi(z)^*e_i\|_{L^2_p}
 \lesssim
\|\mathfrak{m}_\xi(z)e_j\|_{L^2_p} \|\mathfrak{m}_\xi(z)^* e_i\|_{L^2_p},
 \end{split} 
\end{equation}
where we used the self-adjointness of $\mathscr{K}$ and the uniform bound for 
$\mathscr{W}_\xi(z)^{-1}$.
Altogether these estimates give the  $\mathcal{O}(|\xi|^{9/5})$ upper bound for the off-diagonal terms and the diffusive diagonal $z + \mathcal{O}(|\xi|^2)$. 
They also give $\mathfrak{F}_\xi(z)=zI_2+\mathcal{O}(|\xi|^{8/5})$ in the central disk. In particular, $|\xi|^{8/5}=\smallO(|\xi|)$ proves
\eqref{eq:Outer_expansion}, item (iii),  since $\frac{\|\mathfrak{F}_\xi(z) -z I_2 \|}{|\xi|}\lesssim |\xi|^{3/5} \to 0$. 

So far we have that the diagonal fractional and diffusive contribution are upper bounded by $z+\mathcal{O}(|\xi|^{8/5})$ and $z+\mathcal{O}(|\xi|^{2})$, respectively. We now identify their leading terms. This will allow us to locate later the eigenvalues at the precise  scales. 

We start by providing a little-$\smallO$ improvement for the fractional contribution to the second term in \eqref{eq:entries for Feshbach_formula}, exploiting the compactness of $\mathscr{K}$: 
The sequence 
$\nu_{\xi,z} :=|\xi|^{-4/5}\mathfrak{m}_\xi(z)^* e_{\rm fr}$ is bounded in $L^2_p$  
and converges weakly to zero, uniformly for $z$ in the central disk. This is since on every fixed bulk it tends to zero strongly, but its norm stays bounded: test against a fixed $\phi  \in L^2_p$ and split at some 
$\omega = \delta$. On the fixed bulk $\{\omega \geq \delta\}$, its $L^2$ norm 
is of order $|\xi|^{1/5} \to 0$. On the edge $\{\omega < \delta\}$ the pairing 
is bounded by 
 $ C 
 \| \mathbf{1}_{\{\omega < \delta\}}\phi \|_{L^2_p}$. First let $|\xi| \to 0$ and then $\delta \to 0$, to see the weak convergence of $\nu_{\xi,z}$.  
 Then the compactness of $\mathscr{K}$ implies 
\begin{equation}
\sup_{|z| \leq \vartheta_* |\xi| }
\|\mathscr{K}\mathfrak{m}_\xi(z)^* e_{\rm fr}\|_{L^2_p} = \smallO(|\xi|^{4/5}).
\end{equation}
The uniform conclusion follows by applying compactness along
any sequence $|\xi| \to 0$ and $|z|\leq\vartheta_* |\xi|$. 
Inserting this into \eqref{eq: micro K_part of m_estimate1} shows 
that the second term is $\smallO(|\xi|^{8/5})$ for $(i,j)=({\rm fr},{\rm fr})$ 
and $\smallO(|\xi|^{9/5})$ for $(i,j)=({\rm fr},{\rm df})$.  
The direct mixed entry is $\mathcal{O} (|\xi|^2)= \smallO(|\xi|^{9/5})$, and so
$$
[\mathfrak{F}_\xi(z)]_{\rm fr,df} = \smallO(|\xi|^{9/5}), \qquad
[\mathfrak{F}_\xi(z)]_{\rm df,fr}=\mathcal{O}(|\xi|^{9/5}).
$$ Their product is $\smallO(|\xi|^{18/5})$, proving
\eqref{eq:matrix_cross-product}.

We continue by \underline{identifying the fractional leading coefficient} for item (i).   
At $z=0$, reflection symmetry gives
\begin{equation}
\langle\mathfrak{m}_\xi(0)e_{\rm fr},e_{\rm fr}\rangle
=\int_{\mathbb{T}}
 \frac{a|\psi_{\rm fr}|^2\xi^2v^2}{a^2+\xi^2v^2} dp.
\end{equation} 
On any fixed bulk $\{\omega \geq \delta\}$ this integral is
$\mathcal{O}_\delta(|\xi|^2)=\smallO(|\xi|^{8/5})$.  Near the edges of the frequency domain,   
we have from the asymptotics of each term:  
$\frac{a}{\omega^{5/3}}\to a_{0}$, $v^2 \to \frac{1}{4}$ and $|dp/d\omega|\to2$. Also, $\psi_{\rm fr}$ has the same positive limit at both edges. 
Changing the variable to $\omega=|\xi|^{3/5}y$ and applying then the dominated convergence theorem, with the dominating function $C\min\{y^{5/3},y^{-5/3}\}$, we have:  
\begin{equation}
\langle \mathfrak{m}_\xi(0) e_{\rm fr},e_{\rm fr}\rangle = c_{\rm fr} |\xi|^{8/5} + \smallO(|\xi|^{8/5}),
\end{equation}
where, with the measure $dp$ used above,
\begin{align}
c_{\rm fr} := 4|\psi_{\rm fr}(0)|^2
 \int_0^\infty 
 \frac{a_{0} y^{5/3}}{4 a_0^2 y^{10/3}+1} dy >0.
\end{align}
We also used reflection symmetry. The factor $4$ comes from the two edges and their limiting
Jacobians, each equal to $2$. 
We pass perturbatively to $z \neq 0$. We write 
\begin{align*}
\langle \mathfrak{m}_\xi(z)e_{\rm fr},e_{\rm fr}\rangle - \langle \mathfrak{m}_\xi(0)e_{\rm fr},e_{\rm fr}\rangle & = z\int_{\mathbb{T}}|\psi_{\rm fr}|^2
 \frac{a^2}{(a + i \xi v)(a+ z+ i\xi v)} dp
 \\ & = z (1+\smallO(1)).
\end{align*} 
For the last equality: the integrand is bounded by $C|\psi_{\rm fr}|^2$ and
converges to $|\psi_{\rm fr}|^2$ uniformly in $|z|\leq\vartheta_* |\xi|$ 
for almost every fixed $p$. Its integral therefore converges
uniformly to $\|\psi_{\rm fr}\|_{L^2_p}^2=1$. 

So for $|z|\lesssim |\xi|^{8/5}$, this together with the $\smallO(|\xi|^{8/5})$ bound for
the second term, give 
$$
[\mathfrak{F}_\xi(z)]_{\rm fr,fr}
=z+c_{\rm fr} |\xi|^{8/5} + \smallO(|\xi|^{8/5}).
$$
Altogether with the previous bounds,  conclude 
\eqref{eq:matrix_expansion}. 

Finally we continue by \underline{identifying the diffusive leading coefficient}, in item (ii).   
We work uniformly for $|z|\lesssim\xi^2$ and we expand the two terms in
\eqref{eq:entries for Feshbach_formula} with $i=j={\rm df}$. 

For the first term gives
$$
\langle\mathfrak{m}_\xi(z)e_{\rm df},e_{\rm df}\rangle = z - \int_{\mathbb{T}}
\frac{(z +i\xi v)^2}{a+z+i\xi v} |\psi_{\rm df}|^2 dp.
$$ Now since $z/\xi \to 0$ uniformly in this region,
$$
\frac{1}{|\xi|^2} \frac{(z+i\xi v)^2}{(a+z+i\xi v)} |\psi_{\rm df}|^2 \to
-\frac{v^2|\psi_{\rm df}|^2}{a}
$$
for almost every $p$. By the bound \eqref{eq:lowerbound_freedisk} in Lemma \ref{lem:m_small_Disk}, the absolute value of the integrand is bounded by $C|\psi_{\rm df}|^2/a$, which is integrable as it behaves like $\omega^{1/3}$ near the edges.  Dominated convergence then gives
\begin{align} \label{eq:contrib_1st term}
\langle\mathfrak{m}_\xi(z)e_{\rm df}, e_{\rm df} \rangle
= z + \xi^2 \int_{\mathbb{T}} \frac{v^2|\psi_{\rm df}|^2}{a} dp
 +\smallO(\xi^2).
\end{align}
Next, we expand the second term, which writes as 
$$ \left\langle \mathscr{K}
\mathsf{Q}\mathscr{W}_\xi(z)^{-1}\mathsf{Q} \mathfrak{m}_\xi(z)e_{\rm df}, 
 \mathfrak{m}_\xi(z)^*e_{\rm df}
\right\rangle.
$$  We notice that the first order limit of the multiplier-factor is 
$$
\frac{\mathfrak{m}_\xi(z)e_{\rm df}}{i\xi}
=
\frac{ \left(\tfrac{z}{i\xi} + v\right) a^{1/2}\psi_{\rm df}}{ a +z+ i\xi v} 
\to \frac{v\psi_{\rm df}}{a^{1/2}} =: \varphi_{\rm df}.
$$
The limit belongs in $L^2_p$, since
$|\varphi_{\rm df}|^2\lesssim\omega^{1/3}$ near the edges.  
Since also the quotient above is bounded by
$C a^{-1/2}|\psi_{\rm df}|\in L^2_p$, dominated convergence yields (uniformly in  $|z|\lesssim\xi^2$):   
\begin{align} \label{eq:leading error_in_xi_for_ m_{df}}
\mathfrak{m}_\xi(z)e_{\rm df}
=i\xi\  \varphi_{\rm df}+ \smallO(|\xi|),
\quad \text{ and }\quad 
\mathfrak{m}_\xi(z)^*e_{\rm df}
=-i\xi \ \varphi_{\rm df} + \smallO(|\xi|) 
\ \in \ L^2_p.  
\end{align} 
Also $\mathsf{Q} \varphi_{\rm df}=\varphi_{\rm df} \in \mathsf{Q}L^2_p$, thanks to the reflection symmetry: 
$ \langle \varphi_{\rm df},e_j\rangle
=\int_{\mathbb{T}}v\psi_{\rm df}\psi_j dp=0$, 
for both $j\in\{{\rm fr},{\rm df}\}$. 
 
We next replace $\mathscr{W}_\xi(z)^{-1}$ 
by its limit $\mathscr{A}^{-1}$, in operator norm: Recall that
$$ \mathscr{W}_\xi(z)
=\mathscr{A} + \mathsf{Q}\mathfrak{m}_\xi(z)\mathscr{K}\mathsf{Q}, 
\qquad \mathscr{A} = \mathsf{Q}(I-\mathscr{K})\mathsf{Q}. 
$$ 
The perturbation tends to zero in operator norm by 
\eqref{eq:strong converg_compactK}.
Since both inverses are uniformly  
bounded on $\mathsf{Q}L^2_p$, from the identity
$$
\mathscr{W}_\xi(z)^{-1}-\mathscr{A}^{-1}
=-\mathscr{W}_\xi(z)^{-1} 
\mathsf{Q}\mathfrak{m}_\xi(z)\mathscr{K}\mathsf{Q} \mathscr{A}^{-1}
$$
we get $
\|\mathscr{W}_\xi(z)^{-1}-\mathscr{A}^{-1}\|
 _{\mathsf{Q}L^2_p\to\mathsf{Q}L^2_p}
\to 0$. 
uniformly in the disk.
From this and \eqref{eq:leading error_in_xi_for_ m_{df}}, the second term is
\begin{align}\label{eq:contrib_2nd term}
-\xi^2 \langle \mathscr{K}\mathscr{A}^{-1} \varphi_{\rm df},
 \varphi_{\rm df}\rangle
+\smallO(\xi^2). \footnote{Here the minus sign follows from
$\langle i\xi h_1,-i\xi h_2\rangle
=-\xi^2\langle h_1, h_2\rangle $.}
\end{align}
Now the contributions from both the first and second term, \eqref{eq:contrib_1st term} and \eqref{eq:contrib_2nd term}, give 
$$
[\mathfrak F_\xi(z)]_{\rm df,df}
=z+\xi^2\Big(
 \|\varphi_{\rm df}\|_2^2+
 \langle\mathscr{K}\mathscr{A}^{-1}\varphi_{\rm df},\varphi_{\rm df}\rangle \Big) + \smallO(\xi^2).
$$
On $\mathsf{Q}L^2_p$, the identity
$(I-\mathscr{K})\mathscr{A}^{-1}=I_{\mathsf{Q}}$
gives
$ I_{\mathsf{Q}}+\mathscr{K}\mathscr{A}^{-1}
=\mathscr{A}^{-1}$.
Thus indeed we conclude the claim $[\mathfrak F_\xi(z)]_{\rm df,df}
=z+c_{\rm df}\xi^2+\smallO(\xi^2),
$ with 
$c_{\rm df}:= \langle\mathscr{A}^{-1} \varphi_{\rm df}, \varphi_{\rm df} \rangle > 0$. 
The sign follows from positivity and invertibility 
of $\mathscr{A}$ on $\mathsf{Q}L^2_p$, (cf. the proof of Lemma \ref{lem:W_micro_invertible}), and
$\varphi_{\rm df}\neq 0$. 
\end{proof}

\subsection{Location of the two eigenvalues} 
We continue by applying Rouch\'e's theorem on two small circles centered at 
$-c_{\rm fr}|\xi|^{8/5}$ and $-c_{\rm df}\xi^2$, i.e. the scales identified above, to locate the zeros of $\det\mathfrak F_\xi(z)$. We then apply it on the outer circle to show that 
there are no other zeros in the central disk. Figure \ref{fig:lowfrequency_spectrum} below  illustrates their locations.

\begin{proposition}
\label{lem:2poles_disk}
\label{prop:determinant_contours_expansion small xi} 
Let $0<\vartheta_{\rm fr}<c_{\rm fr}$ and
$0<\vartheta_{\rm df}<c_{\rm df}$, and define the disks  
\begin{align*} 
\mathbb{D}_{\rm fr}(\xi):=
 \{z:|z+c_{\rm fr}|\xi|^{8/5}|<\vartheta_{\rm fr}|\xi|^{8/5}\},& \quad  
\mathbb{D}_{\rm df}(\xi):=
 \{z:|z+c_{\rm df}\xi^2|<\vartheta_{\rm df}\xi^2\}. 
\end{align*} 
Write $\Gamma_j(\xi)=\partial\mathbb{D}_j(\xi)$ for 
$j\in\{{\rm fr},{\rm df}\}$. 
For sufficiently small $0<|\xi|<\xi_*$, these two closed disks are disjoint and contained in the larger central disk $\mathbb{D}_\xi$: 
$$\mathbb{D}_\xi:=\{z\in\mathbb{C}: |z|<\vartheta_*|\xi|\},
\qquad 
\Gamma_\xi:=\partial\mathbb{D}_\xi.$$
The determinant $\det\mathfrak{F}_\xi$ has exactly one zero, 
counted with multiplicity, in each local disk, and no other zeros 
in $\mathbb{D}_\xi$. In particular, both zeros are simple and satisfy 
$$ z_{\rm fr}(\xi) = - c_{\rm fr} |\xi|^{8/5}+ \smallO(|\xi|^{8/5}),
\quad 
z_{\rm df}(\xi) = -c_{\rm df}\xi^2 + \smallO(\xi^2).
$$
\end{proposition}

\begin{proof}
For each sufficiently small $\xi\neq 0$,
Lemmas \ref{lem:m_small_Disk} and \ref{lem:W_micro_invertible}
imply that $\mathfrak{F}_\xi$ is holomorphic in a neighborhood of $\overline{\mathbb{D}_\xi}$: 
Indeed, the denominator $(a+z+i\xi v)$ is uniformly bounded 
away from zero in a neighborhood of the closed disk. So $\mathfrak{m}_\xi(z)$,
$\mathscr{M}_\xi(z)$ and $\mathscr{W}_\xi(z)$ are holomorphic 
as bounded operator-valued functions. 
Also $\mathscr{W}_\xi(z)$ is invertible on the closed disk, and its inverse is holomorphic in a possibly smaller neighborhood. 

We first see that since $\xi^2= \smallO(|\xi|^{8/5})$ and
$|\xi|^{8/5}=\smallO(|\xi|)$, both local disks are lying inside the central one. 
Their disjointness follows, for sufficiently small $|\xi|$, from the fact that the sum of their radii is smaller than the distance of their centers:  
$$ (c_{\rm fr}-\vartheta_{\rm fr}) |\xi|^{8/5}
>(c_{\rm df} + \vartheta_{\rm df}) \xi^2.
$$ 
\underline{On $\Gamma_{\rm fr}(\xi)$}, $|z|\sim |\xi|^{8/5}$.
Taking the determinant in \eqref{eq:matrix_expansion}, and using
\eqref{eq:matrix_cross-product}, gives
\begin{equation}
\det \mathfrak{F}_\xi(z)
=z (z+c_{\rm fr}|\xi|^{8/5}) + \smallO(| \xi|^{16/5}).
\end{equation} 
On this circle,
$$
g_{{\rm fr},\xi}(z) := z(z+c_{\rm fr}|\xi|^{8/5}), \quad
|g_{{\rm fr},\xi}|
\geq (c_{\rm fr}-\vartheta_{\rm fr})\vartheta_{\rm fr}|\xi|^{16/5}.
$$
Thus, for sufficiently small $|\xi|$,
$$
|\det\mathfrak{F}_\xi(z) - g_{{\rm fr},\xi}(z)| < |g_{{\rm fr},\xi}(z)| 
\quad\text{on }\Gamma_{\rm fr}(\xi). 
$$
An application of Rouch\'e's theorem then gives us that $\det\mathfrak{F}_\xi$
and $g_{{\rm fr},\xi}$ have the same number of zeros
in $\mathbb{D}_{\rm fr}(\xi)$, counted with multiplicity. 
The polynomial $g_{{\rm fr},\xi}$ has zeros $0$ and $-c_{\rm fr}|\xi|^{8/5}$.
Only the second lies in this disk, since
$\vartheta_{\rm fr}<c_{\rm fr}$. 
Therefore $\det\mathfrak{F}_\xi$ has exactly one zero there. 

\noindent
\underline{On $\Gamma_{\rm df}(\xi)$}, similarly we have  
$(c_{\rm df}- \vartheta_{\rm df}) \xi^2 \leq |z| \leq (c_{\rm df} + \vartheta_{\rm df}) \xi^2$, and so $|z|\lesssim \xi^2$, which also lies in the region $|z|\lesssim |\xi|^{8/5}$.  
We estimate the determinant of   \eqref{eq:matrix_expansion}, which with  \eqref{eq:matrix_cross-product}
and the sharper asymptotic \eqref{eq: df,df entry_sharper expansion},  give  
\begin{equation}
\det\mathfrak{F}_\xi(z)
=(z+c_{\rm fr}|\xi|^{8/5})(z+c_{\rm df}\xi^2)+ \smallO (|\xi|^{18/5}). 
\end{equation}
The polynomial  
$ g_{{\rm df}, \xi}:=(z+c_{\rm fr}|\xi|^{8/5})(z+c_{\rm df}\xi^2) $ satisfies the lower bound $ | g_{{\rm df}, \xi}| \geq \frac12 c_{\rm fr}\vartheta_{\rm df}|\xi|^{18/5}$ on this circle, 
after decreasing $\xi_*$. Thus on $\Gamma_{\rm df}(\xi)$: 
$$\det\mathfrak{F}_\xi(z) - g_{{\rm df},\xi}(z) = \smallO(|\xi|^{18/5}) \ \Longrightarrow \ |\det\mathfrak{F}_\xi(z) - g_{{\rm df},\xi}(z)| < g_{{\rm df},\xi}(z).$$ 

The only zero of $g_{{\rm df},\xi}(z)$ lying in $\mathbb{D}_{\rm df}(\xi)$ is  $-c_{\rm df}\xi^2$. Rouch\'e's theorem then  gives exactly one zero of the determinant in this disk as well. 

\noindent 
\underline{On the central circle  $\Gamma_{\xi}$},  
\eqref{eq:Outer_expansion} gives $ \det\mathfrak{F}_\xi(z)= z^2+ \smallO(\xi^2)$. 
Since
$|z|^2= \vartheta_*^2\xi^2$ 
on $\Gamma_{\xi}$,  Rouché's theorem applies again with the function $z^2$, and shows that the $\det\mathfrak{F}_\xi=0$ has exactly two zeros counted  
with multiplicity, in $\mathbb{D}_{\xi}$. These are the two zeros already found in the disjoint local disks, and so they are both simple. 

To conclude the precise stated little-$\smallO$ asymptotics, we may repeat the local arguments with arbitrarily
small fixed positive $\vartheta_{\rm fr},\vartheta_{\rm df}$. 
\end{proof}

\begin{remark}[Reality of the zeros] 
\label{rem:reality_evalues} The two 
zeros that we found are real and strictly negative. 
Indeed, first notice that both disks $\mathbb{D}_{\rm fr}(\xi), \mathbb{D}_{\rm df}(\xi)$ are invariant under complex conjugation (since their centers are real).  Also, if $z_j(\xi)$ is the unique zero in one of these disks, then it holds that 
$\overline{z_j(\xi)}$ is also a zero in the same disk: 
For $h\in L^2_p$, write $h^\#(p):=\overline{h(2\pi-p)}$. We have 
$(\mathscr{M}_\xi(z)h)^\# = \mathscr{M}_\xi(\overline{z})h^\#$ due to the  reflection symmetries of $a(2\pi-p)=a(p)$, $v(2\pi-p) = - v(p)$,  
and the reality and reflection invariance of $\mathscr{K}$.  
Since $a$ and the basis $\psi_j$ are real and
reflection invariant, $e_j=a^{1/2}\psi_j$ satisfies $e_j^\#=e_j$, 
and $(\mathsf{P}h)^\#=\mathsf{P}h^\#$, $(\mathsf{Q}h)^\#=\mathsf{Q}h^\#$.  
Thus both the microscopic inverse and the (macroscopic) Feshbach map have the same properties.  
In the basis  
$\{e_{\rm fr},e_{\rm df}\}$, this means 
$$
 \mathfrak{F}_\xi(\overline z) = \overline{\mathfrak{F}_\xi(z)},
\qquad
\det\mathfrak{F}_\xi(\overline z) =
\overline{\det\mathfrak{F}_\xi(z)}. 
$$
Due to uniqueness counting multiplicity, we have  
$z_{\rm fr}(\xi),\ z_{\rm df}(\xi)\in \mathbb{R}$.  
Finally, the positivity of
$c_{\rm fr},c_{\rm df}$ and the asymptotics of the zeros $z_j$ give, for sufficiently small $\xi$, 
$$
z_{\rm fr}(\xi)\leq
    -\frac{c_{\rm fr}}2|\xi|^{8/5}<0, \quad 
    z_{\rm df}(\xi)
    \leq
    -\frac{c_{\rm df}}2\xi^2<0. 
$$
\end{remark}

\subsection{Eigenvectors in the original variables} 
The invertibility equivalence proved above in \eqref{eq:equiv_invertibility}, shows that the two simple, real zeros of $\det\mathfrak{F}_\xi$ that we found in Proposition \ref{lem:2poles_disk}, are precisely the spectral points of $B(\xi)$ in the central disk $\mathbb{D}_\xi$. 
We now return from the variable $w=a^{1/2}u$ to the original variable $u$, construct their corresponding eigenvectors and show that each associated Riesz projection has rank one, i.e. they are in fact algebraically simple eigenvalues.  

\begin{lemma}[Eigenvectors and simplicity]
\label{lem:u_reconstruction}
For each of the two zeros $z_j(\xi)$, $j  \in \{ {\rm df}, {\rm fr}\}$, 
choose a nonzero 
$\zeta_j \in \ker \mathscr{F}_\xi(z_j)\subset\mathscr{N}:=a^{1/2}\ker L$  and set
$$ w_j:= \zeta_j-\mathscr{W}_\xi(z_j)^{-1}
\mathsf{Q} \mathscr{M}_\xi(z_j)\zeta_j,
\qquad
u_j:= \frac{a^{1/2}}{a+z_j+i\xi v}\mathscr{K} w_j.
$$
Then $u_j \in L^2_p$ is nonzero,
$a^{1/2}u_j=w_j$, and $(z_j-B(\xi))u_j=0$. 
Moreover, $z_j$ is an algebraically simple eigenvalue of $B(\xi)$.
\end{lemma}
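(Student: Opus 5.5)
The argument has two halves: first check that $u_j$ is a genuine nonzero eigenvector, then show that the Riesz projection of $B(\xi)$ at $z_j$ has rank one.

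\emph{Eigenvector.} By \eqref{eq:eigenproblem_decomposed} with $f=0$, the vector $w_j=\zeta_j+q_j$, with $q_j=-\mathscr{W}_\xi(z_j)^{-1}\mathsf{Q}\mathscr{M}_\xi(z_j)\zeta_j$, satisfies $\mathsf{Q}\mathscr{M}_\xi(z_j)w_j=0$. Since $\mathscr{F}_\xi(z_j)\zeta_j=0$, the $\mathsf{P}$-component also vanishes, so $\mathscr{M}_\xi(z_j)w_j=0$. Moreover $w_j\neq0$, because $\mathsf{P}w_j=\zeta_j\neq0$. Unwinding \eqref{eq:def_M_xi matrix} gives
\[
w_j=\frac{a}{a+z_j+i\xi v}\mathscr{K}w_j=a^{1/2}u_j .
\]
Lemma~\ref{lem:m_small_Disk} gives $|a^{1/2}/(a+z_j+i\xi v)|\lesssim a^{1/2}/(a+|\xi|)\lesssim|\xi|^{-1/2}$, so $u_j\in L^2_p$. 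Also $u_j\neq0$, since $a^{1/2}u_j=w_j\neq0$. Finally,
\[
(a+z_j+i\xi v)u_j=a^{1/2}\mathscr{K}w_j=a^{1/2}\mathscr{K}a^{1/2}u_j=Ku_j,
\]
which is exactly $(z_j-B(\xi))u_j=0$.

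\emph{Algebraic simplicity.} I would show that the Riesz projection $\frac{1}{2\pi i}\oint_{\Gamma_j}(z-B(\xi))^{-1}dz$ has rank one. Substituting \eqref{eq:bounded_physical_resolvent}, the term $R(z)$ is holomorphic on $\overline{\mathbb D_j}$ and integrates to zero. The remaining term is $R(z)a^{1/2}\mathscr K\,\mathscr M_\xi(z)^{-1}a^{1/2}R(z)$, with holomorphic bounded outer factors, so it suffices to understand the singular part of $\mathscr M_\xi(z)^{-1}$.

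Block inversion with respect to $\mathsf P\oplus\mathsf Q$ expresses $\mathscr M_\xi(z)^{-1}$ as holomorphic terms plus
\[
\bigl(\mathsf{P}-\mathscr{W}^{-1}\mathsf{Q}\mathscr{M}\mathsf{P}\bigr)\,\mathscr{F}_\xi(z)^{-1}\,\bigl(\mathsf{P}-\mathsf{P}\mathscr{M}\mathsf{Q}\mathscr{W}^{-1}\mathsf{Q}\bigr),
\]
and $\mathscr{F}_\xi(z)^{-1}=J\mathfrak{F}_\xi(z)^{-1}J^*$. Now $\mathfrak F_\xi(z)^{-1}=\operatorname{adj}\mathfrak F_\xi(z)/\det\mathfrak F_\xi(z)$, and by Proposition~\ref{lem:2poles_disk} the determinant has a simple zero at $z_j$. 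Hence $\mathfrak F_\xi^{-1}$ has only a simple pole at $z_j$, with residue $\operatorname{adj}\mathfrak F_\xi(z_j)/(\det\mathfrak F_\xi)'(z_j)$. This residue is nonzero, hence of rank exactly one: a $2\times2$ matrix with vanishing determinant has rank $\le1$, and its adjugate vanishes only if the matrix itself vanishes, which the expansions of Lemma~\ref{lem:matrix expansion_xi small} exclude near $z_j$. It follows that the residue of $(z-B(\xi))^{-1}$ at $z_j$ is a rank-at-most-one operator. Its range contains $u_j$, so it has rank exactly one, and the pole is simple, so there is no nilpotent part. This gives algebraic simplicity.

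\emph{Main obstacle.} The key step is ruling out $\mathfrak F_\xi(z_j)=0$, i.e.\ a two-dimensional kernel. The diagonal entry $[\mathfrak F_\xi(z_j)]_{\rm fr,fr}$ at $z_j=z_{\rm df}$, and the entry $[\mathfrak F_\xi(z_j)]_{\rm df,df}$ at $z_j=z_{\rm fr}$, are both nonzero at leading order by \eqref{eq:matrix_expansion} and \eqref{eq: df,df entry_sharper expansion}. Alternatively, simplicity of the zero of $\det\mathfrak F_\xi$ already forces $\operatorname{rank}\mathfrak F_\xi(z_j)=1$: if the matrix vanished, the determinant would vanish to second order. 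A smaller bookkeeping point is that $a^{-1/2}$ never appears unboundedly, because every factor in the resolvent formula carries $R(z)a^{1/2}$, which Lemma~\ref{lem:m_small_Disk} bounds.
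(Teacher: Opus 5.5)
Your proposal is correct and follows essentially the same route as the paper. You verify $\mathscr{M}_\xi(z_j)w_j=0$, reconstruct $u_j$ using the bound of Lemma~\ref{lem:m_small_Disk}, and then read off a simple pole with rank-one residue from the block inverse, $\mathscr{F}_\xi^{-1}=J\mathfrak{F}_\xi^{-1}J^*$ and the adjugate formula, with $u_j$ showing that the Riesz projection is nonzero. Your justification that $\mathfrak{F}_\xi(z_j)\neq 0$, which the paper only asserts, is a genuine addition, and the order-of-vanishing argument is the cleanest version of it. One small correction: for the ${\rm df,df}$ entry at $z_{\rm fr}$ only \eqref{eq:matrix_expansion} applies, since \eqref{eq: df,df entry_sharper expansion} holds only for $|z|\lesssim\xi^2$.
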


\begin{proof}  By construction, $\mathsf{P}w_j=\zeta_j$ and
$\mathsf{Q}\mathscr{M}_\xi(z_j)w_j=0$. Its remaining component is
$\mathsf{P}\mathscr{M}_\xi(z_j)w_j
=\mathscr{F}_\xi(z_j)\zeta_j=0$, since $\mathsf{P}\zeta_j = \zeta_j$. Now since $w_j \in L^2_p$ and $\mathscr{K}$ is bounded, $\mathscr{K} w_j \in L^2_p$. Moreover for the denominator, Lemma \ref{lem:m_small_Disk} gives us  for each fixed $\xi\neq 0$,
$$  \left|\frac{a^{1/2}}{a+z_j+i\xi v}\right| \lesssim\frac{a^{1/2}}{a+|\xi|}
 \leq\frac{1}{2\sqrt{|\xi|}}. 
$$ Thus $u_j \in L^2_p$ indeed and its definition gives
$a^{1/2}u_j=w_j\neq 0$. Since
$Ku_j=a^{1/2}\mathscr K w_j$, it also gives
$(a+z_j+i\xi v)u_j=Ku_j$, which is equivalent to $(z_j - B(\xi))u_j=0$, as required.

To prove algebraic simplicity, recall that
$\det \mathfrak{F}_\xi(z)$ has a simple zero at $z_j$\footnote{meaning that $\det \mathfrak{F}_\xi(z_j)=0,$ and $  \partial_z\det\mathfrak{F}_\xi(z_j)\neq 0$.},  
and so for $z\neq z_j$ sufficiently close to $z_j$, the 
matrix inverse formula gives 
$ \mathfrak F_\xi(z)^{-1}  =
\frac{\operatorname{adj}\mathfrak F_\xi(z)}{\det\mathfrak{F}_\xi(z)}, 
$ where $\operatorname{adj}$ denotes the adjugate matrix. 
Taylor expanding at $z_j$ then yields
$$ \mathfrak{F}_\xi(z)^{-1}
= \frac{1}{z-z_j}
 \frac{\operatorname{adj}\mathfrak F_\xi(z_j)}{\partial_z\det
 \mathfrak{F}_\xi(z_j)}
    + \text{a matrix holomorphic near }z_j.
$$
The residue $\operatorname{Res}_{z=z_j}\mathfrak F_\xi(z)^{-1} =
 \frac{\operatorname{adj}\mathfrak F_\xi(z_j)}{\partial_z\det
 \mathfrak{F}_\xi(z_j)}$ has rank $1$: since $\mathfrak F_\xi(z_j)$ 
is singular but it is not the zero matrix, the same holds for its adjugate (since these are two-by-two matrices) and therefore this has rank one. Also the denominator $\partial_z\det\mathfrak F_\xi(z_j)\neq 0$, thus the residue
 has indeed rank one. In particular, it is nonzero, so 
$\mathfrak{F}_\xi(z)^{-1}$ has a simple pole at $z_j$. 

Now \eqref{eq:bounded_physical_resolvent} together with the block inverse formula 
\begin{align*}
\mathscr{M}_\xi(z)^{-1} &= \mathsf{Q}\mathscr{W}_\xi(z)^{-1}\mathsf{Q} + \big[\mathsf{P}-\mathsf{Q}\mathscr{W}_\xi(z)^{-1} \mathsf{Q}\mathscr{M}_\xi(z)\mathsf{P}\big] \mathscr{F}_\xi(z)^{-1} \big[\mathsf{P}-\mathsf{P}\mathscr{M}_\xi(z)\mathsf{Q}
\mathscr{W}_\xi(z)^{-1}\mathsf{Q}\big]
\end{align*} show that
the resolvent of $B$ has at most a simple pole with residue of
rank at most one. This is because all factors except $\mathscr{F}_\xi(z)^{-1}$  are holomorphic near $z_j$, and the inverse 
$\mathscr{F}_\xi(z)^{-1}
=J\mathfrak{F}_\xi(z)^{-1}J^*$ has a simple pole with rank-one residue. Thus, $\mathscr{M}_\xi(z)^{-1}$ has at most a simple pole, whose residue has rank at most one. 

The same conclusion holds for $(z-B(\xi))^{-1}$ by
\eqref{eq:bounded_physical_resolvent}, since its other factors are holomorphic near $z_j$. Its residue is nonzero because $
(z-B(\xi))^{-1}u_j=u_j (z-z_j)^{-1}$. 

Thus, its residue, which is the Riesz projection since $\mathbb{P}_j = \frac{1}{2\pi i}\int_{\Gamma_j} (z-B(\xi))^{-1} dz
=\operatorname{Res}_{z=z_j}(z-B(\xi))^{-1}$, for $\Gamma_j$ being a contour containing (only) $z_j$ and by Cauchy's theorem, has rank one. Thus its range, the generalised e-space associated to $z_j$ has dimension one which  proves the claimed algebraic simplicity.
\end{proof}

\section{Linear Semigroup Decay} \label{sec:linear}

Having now a detailed understanding of the low-frequency spectrum, we move to establish the decay of the linear semigroup. We use $\widehat{h}(\xi,p)=\int_{\mathbb{R}}e^{-ix\xi}h(x,p) dx$
and the inverse factor $(2\pi)^{-1}$. For $t\geq 0$, write
$\langle t \rangle=10+t$. 

We recall here the Fourier norms that we are using:
\begin{align*}
 X_{\mu,r}[h]
 :=\left\|\|\omega^\mu\widehat{h}(\xi)\|_{L^\infty_p}\right\|_{L^r_\xi},
 \quad \mu\in\{-1/2,1/6,1/2\},\ \text{ for }\ r=1,2.
\end{align*}

We also work in the $L^2_p$-weighted space
$$
 \mathscr{Y}=L^2(a^{-1}dp),\qquad
 \|f\|_{\mathscr{Y}}^2 = \int_{\mathbb{T}}\frac{|f(p)|^2}{a(p)} dp,
$$
and denote its spatial Fourier supremum as
$$ \|\widehat{h}\|_{L^\infty_\xi\mathscr{Y}}
 :=\operatorname*{ess\,sup}_{\xi\in\mathbb{R}} \|\widehat{h}(\xi, \cdot) \|_{\mathscr{Y}}. 
$$  
The semigroup acts in Fourier variables by
$\widehat{S(t)h}(\xi)=e^{tB(\xi)}\widehat{h}(\xi)$.

The main result of this section is

\begin{theorem}[Decay of linear semigroup]\label{thm:linear}
Let $0<\delta<1/10$, and suppose that $\Pi g_0(x,\cdot)=0$ for almost every $x$. For $r=1,2$, whenever the right-hand sides are finite, 
\begin{align}
 X_{1/6,r}[S(t)g_0] &\lesssim_\delta  \langle t\rangle^{-2/5}  \big(X_{-1/2,r}[g_0]+
 \|\widehat{g}_0\|_{L^\infty_\xi\mathscr{Y}} 
 \big),
 \label{eq:decay_weight6}\\
 X_{1/2,r}[S(t)g_0]
 &\lesssim_\delta  \langle t\rangle^{-3/5+\delta} \big(X_{-1/2,r}[g_0]+
 \|\widehat{g}_0\|_{L^\infty_\xi \mathscr{Y}}
 \big).
 \label{eq:decay_weight-2}
\end{align}
The same argument gives the unweighted estimate
\begin{equation} \label{eq:decay_L2}
 \| S(t)g_0 \|_{L^2_{x,p}} \lesssim_\delta\langle t\rangle^{-3/5+\delta} \big(X_{-1/2,2}[g_0] + \|\widehat{g}_0\|_{L^\infty_\xi\mathscr{Y}} 
 \big).
\end{equation}
In particular,
\begin{align}
\| \omega^{1/6}S(t)g_0 \|_{L^\infty_{x,p}} &\lesssim_\delta\langle t \rangle^{-2/5}
\big(X_{-1/2,1}[g_0]+
\|\widehat{g}_0\|_{L^\infty_\xi \mathscr{Y}}\big),
 \label{eq:decay_physical1}\\
 \| \omega^{1/2}S(t)g_0 \|_{L^\infty_{x,p}}
 &\lesssim_\delta\langle t\rangle^{-3/5+\delta} \big(X_{-1/2,1}[g_0]+
\|\widehat{g}_0\|_{L^\infty_\xi \mathscr{Y}}\big).
 \label{eq:decay_physical2}
\end{align}
\end{theorem}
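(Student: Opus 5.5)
We work frequency by frequency: the whole argument is a family of estimates on $e^{tB(\xi)}\widehat{g}_0(\xi)$, uniform in $\xi$, followed by an $L^r_\xi$ integration. Fix $\xi_*$ from Theorem~\ref{thm:low_frequency_spectrum} and split into the low-frequency regime $|\xi|<\xi_*$ and the regime $|\xi|\geq\xi_*$.

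\textbf{Low frequencies: spectral decomposition.} For $|\xi|<\xi_*$ we write
\[
e^{tB(\xi)}=e^{tz_{\rm fr}}\mathbb{P}_{\rm fr}(\xi)+e^{tz_{\rm df}}\mathbb{P}_{\rm df}(\xi)+e^{tB(\xi)}(I-\mathbb{P}_{\rm fr}-\mathbb{P}_{\rm df}).
\]
The Riesz projections are rank one and are given by residues of \eqref{eq:bounded_physical_resolvent} through $J\,\mathrm{adj}\mathfrak{F}_\xi(z_j)J^*/\partial_z\det\mathfrak{F}_\xi(z_j)$. From the matrix expansion of Lemma~\ref{lem:matrix expansion_xi small}, $\partial_z\det\mathfrak{F}_\xi(z_j)$ is of order $|\xi|^{8/5}$ for both zeros.

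The key gain comes from the hypothesis $\Pi\widehat g_0(\xi)=0$. Pairing $\mathbb{P}_j\widehat g_0$ against the left eigenvector, the macroscopic component of that eigenvector drops out. Only a microscopic correction, of size $\mathfrak{m}_\xi e_j$, survives. This gives
\[
|\mathbb{P}_{\rm fr}\widehat g_0|\lesssim |\xi|^{4/5}\|\widehat g_0\|_{\mathscr{Y}},\qquad |\mathbb{P}_{\rm df}\widehat g_0|\lesssim|\xi|\,\|\widehat g_0\|_{\mathscr{Y}},
\]
which is where the $\mathscr{Y}$ norm enters. Then $\int_{|\xi|<\xi_*}e^{-c|\xi|^{8/5}t}|\xi|^{4/5}d\xi\sim t^{-9/8}$ in $L^1_\xi$, and the $L^2_\xi$ analogue is similarly fast. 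The weighted $L^\infty_p$ norm of the eigenvectors $u_j$ is controlled through $u_j=a^{1/2}(a+z_j+i\xi v)^{-1}\mathscr{K}w_j$ together with the smoothing of $K$ from the Appendix. Because $u_j$ is essentially of size $\psi_j$ plus an edge correction, the mode contributions decay at least as fast as required.

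\textbf{Low frequencies: remainder.} On the invariant complementary subspace we run an energy estimate for $\partial_t\widehat g=B(\xi)\widehat g$. Here the dissipation $\|(I-\Pi_a)\widehat g\|_{L^2_a}^2$ controls the kernel part as well, since components in the removed eigendirections are excluded; this replaces the orthogonality used in the homogeneous case. Because $a$ degenerates at the edges, we follow the bulk/edge splitting of the introduction. Take edges $\mathcal{E}_t=\{\omega\lesssim t^{-\kappa}\}$. On the bulk, $a\gtrsim t^{-5\kappa/3}$ provides exponential damping. On the edges, the initial weight $\omega^{-1/2}$ (from $X_{-1/2,r}$) and the $\mathscr{Y}$ weight make the mass small, of order $t^{-\kappa}$ times powers. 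Optimising $\kappa$ gives $L^2_p$ decay $\langle t\rangle^{-3/5+\delta}$.

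\textbf{High frequencies.} For $|\xi|\geq\xi_*$ there is no kernel obstruction. We add a bounded hypocoercive corrector to the energy, $\mathrm{Re}\langle \widehat g,\,\tfrac{-i\xi}{1+\xi^2}\,\mathcal{T}\Pi\widehat g\rangle$ with $\mathcal{T}$ built from $v$ on $\ker L$. This recovers dissipation of $\Pi\widehat g$ with a constant uniform in $|\xi|\geq\xi_*$. The same edge splitting then gives the same algebraic rates, uniformly in $\xi$.

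\textbf{Upgrade to weighted $L^\infty_p$ and conclusion.} We use Duhamel in the form
\[
\widehat g(t)=e^{-t(a+i\xi v)}\widehat g_0+\int_0^te^{-(t-s)(a+i\xi v)}K\widehat g(s)\,ds .
\]
The free part decays like $\omega^{-\mu}e^{-ta}$, which yields $\omega^{1/6}$ and $\omega^{1/2}$ rates $t^{-2/5}$ and $t^{-3/5}$ from the $\omega^{-1/2}$ initial weight, since $\sup_\omega\omega^{\mu+1/2}e^{-t\omega^{5/3}}\sim t^{-3(\mu+1/2)/5}$. For the integral term, $K$ maps $L^2_p$ into bounded weighted functions by the smoothing lemmas, possibly via an intermediate $L^c_p(\mathcal{B})$ step, so it inherits the $L^2_p$ rate. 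Integrating in $\xi$ gives \eqref{eq:decay_weight6}–\eqref{eq:decay_L2}, with Plancherel used for \eqref{eq:decay_L2}, and Fourier inversion with $r=1$ gives \eqref{eq:decay_physical1}–\eqref{eq:decay_physical2}.

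\textbf{Main obstacle.} I expect the main obstacle to be the low-frequency remainder energy estimate. It needs coercivity constants uniform as $\xi\to0$, and the microscopic dissipation must control the kernel component on the complementary spectral subspace despite the degeneracy of $a$. Combining this with the time-dependent edge splitting without losing more than $t^{\delta}$ is the delicate point.
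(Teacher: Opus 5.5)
\textbf{Verdict: there is a genuine gap, in the low-frequency remainder estimate.} Your treatment of the two eigenmodes is sound in outline and is a legitimate variant of the paper's. You bound $\langle \widehat g_0,\overline{u_j}\rangle$ through $\mathsf{Q}w_j=-\mathscr{W}_\xi(z_j)^{-1}\mathsf{Q}\mathfrak{m}_\xi(z_j)\mathsf{P}w_j$, which has size $\|\mathfrak{m}_\xi(z_j)e_j\|_{L^2_p}$. This gives the same gains $|\xi|^{4/5}$ and $|\xi|$ that the paper gets from $\|\overline{u_j}^{\perp_a}\|_{L^2_a}\lesssim\sqrt{-z_j}$.

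The gap sits exactly where you say you need coercivity constants uniform as $\xi\to0$: such constants do not exist. Given $\phi$ with $\Pi_a\phi=0$, choose $h=\phi+c_1\mathfrak{f}+c_2\omega\mathfrak{f}$ with $\langle h,\overline{u_j}\rangle=0$ for both $j$. Then $h\in\operatorname{Ran}\mathbb{R}_\xi$, $h^{\perp_a}=\phi$, and $\|h^\parallel\|_{L^2_p}\gtrsim|\langle\phi,\overline{u_{\rm fr}}\rangle|$. The supremum of $|\langle\phi,\overline{u_{\rm fr}}\rangle|/\|\phi\|_{L^2_a}$ over such $\phi$ equals $\|\mathsf{Q}(a^{-1/2}\overline{u_{\rm fr}})\|_{L^2_p}\geq\|u_{\rm fr}\|_{\mathscr{Y}}-\mathcal{O}(1)$. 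This tends to infinity, because $u_{\rm fr}\to\psi_{\rm fr}$ in $L^2_p$ while $\psi_{\rm fr}^2/a\sim\omega^{-5/3}$ is not integrable at the edges. So the best constant in $\|h^\parallel\|_{L^2_p}\le C(\xi)\|h^{\perp_a}\|_{L^2_a}$ blows up, and a scheme requiring uniformity cannot be completed.

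The missing idea is to track the degeneration and pay for it with the data norm:
\begin{itemize}
\item The paper proves $C(\xi)\lesssim|\xi|^{-1/3}$.
\item It runs the energy argument with $\sigma=|\xi|^{2/3}$ and edge radius $r(t)\sim(t+t_0)^{-3/5}$, $t_0\sim\sigma^{-1}$.
\item This yields $\langle t\rangle^{\rho}\|e^{tB(\xi)}\mathbb{R}_\xi f\|_{L^2_p}\lesssim\|\omega^{-\theta}f\|_{L^2_p}+|\xi|^{-2\rho/3}\|f\|_{\mathscr{Y}}$.
\item The singular factor is harmless only because it multiplies $\sup_\xi\|\widehat g_0(\xi)\|_{\mathscr{Y}}$, and $|\xi|^{-2\rho/3}\in L^r(\{|\xi|<\xi_*\})$ for $r=1,2$ since $2\rho r/3<1$.
\end{itemize}
This, and not only the eigenmode projections, is why the $L^\infty_\xi\mathscr{Y}$ norm appears in the hypothesis.

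Your edge step also uses only the initial weight. The energy inequality needs $\|\mathbf{1}_{\mathcal{E}_r}\widehat g(t)\|_{L^2_p}\le r^{\theta}\|\omega^{-\theta}\widehat g(t)\|_{L^2_p}$ at every $t$, so the weight must be propagated along the flow. The paper does this by Duhamel, the bound $\|\omega^{-\theta}e^{-t(a+i\xi v)}Kh\|_{L^2_p}\lesssim\langle t\rangle^{-3/5}\|h\|_{L^2_p}$, and a bootstrap that closes because $\rho>2/5$. The constraint $\theta<1$ caps $\rho=3\theta/5$ below $3/5$, which is where the $\delta$-loss comes from.

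\textbf{Two further steps would not close as sketched.}

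\emph{High frequency.} A corrector using $v$ on the whole torus produces the cross term $\frac{\xi^2}{1+\xi^2}\int_{\mathbb{T}}v^2g^{\perp_a}\overline{g^\parallel}\,dp$. Since $v^2\mathfrak{f}a^{-1/2}\notin L^2_p$, this is not bounded by the dissipation $\|g^{\perp_a}\|_{L^2_a}$. Moreover, with the flat $\Pi$ you would only recover $\Pi g=g^\parallel+\Pi g^{\perp_a}$, and $\Pi g^{\perp_a}$ is again not controlled by the dissipation. The paper avoids both problems by using the weighted projection and a bulk cutoff, $A_\xi=\Pi_a^*T_\xi^*\chi/(1+\xi^2)$ with $\chi$ supported away from the edges.

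\emph{Weighted $L^\infty_p$ upgrade.} A decaying bound on $K\widehat g(s)$ is not enough by itself. At edge points with $a\lesssim t^{-1}$ one has $\int_0^te^{-a(t-s)}\langle s\rangle^{-\rho}ds\gtrsim t^{1-\rho}$, which grows. What is needed is the splitting $\omega^\mu|Kh|\le a(p)\bigl(\eta\|\omega^\mu h\|_{L^\infty_p}+C\|\mathbf{1}_{\mathcal{B}}h\|_{L^c_p}\bigr)$. Its factor $a(p)$ gives $\sup_{a\ge0}\int_0^t ae^{-a(t-s)}\langle s\rangle^{-b}ds\lesssim\langle t\rangle^{-b}$. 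Its edge-input part is absorbed in a bootstrap through the smallness of $\eta$, and only the bulk part is fed by the $L^2_p\to L^c_p(\mathcal{B})$ decay.
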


\medskip

The proof proceeds in the following steps: (i) We first obtain $L^2_p$ decay for each spatial frequency.
At small frequencies, we separate the two eigenmodes and use an ordinary $L^2_p$ energy estimate on the remaining invariant subspace. At frequencies bounded away from zero, we use a modified energy functional different but equivalent to $L^2_p$ and perform an energy estimate to this, as in classical hypocoercivity theory. 
In both cases, we decompose the momentum domain into its bulk, where the collision frequency $a$ is bounded below, and the edges, where we use a weighted norm.  
(ii) We pass from an $L^2_p$-decay, to an $L^c_p$-decay for $c>2$, on a fixed bulk, by using the smoothing effect of the operator $K$. Which we then turn into a weighted pointwise decay on the whole momentum domain, via a kernel splitting estimate and Duhamel's formula. 
(iii) Finally, we pass back to the physical variables, by integrating the estimates in spatial frequency and Fourier inverting to obtain the stated decay in physical space. 

Let us remark that the strategy is inspired by \cite{GLM26}, but of course, the additional nontrivial feature  needed to be taken care of here, is transport. The argument  in \cite{GLM26} uses in an essential way the preservation of flat orthogonality to 
$\ker L$ along the linear flow. But for $B(\xi)$ here, that orthogonality is generally not preserved. We replace it, at small frequencies, by orthogonality to the two adjoint eigenvectors on the invariant spectral complement, while at larger frequencies, we use the modified energy. We must also keep track of the dependence on $\xi$, so that the resulting bounds can be integrated in frequency, in order to Fourier invert. 

We recall that for fixed $\xi_* \leq 1$ sufficiently small, the low-frequency spectral analysis of the previous section gives exactly two algebraically simple, real eigenvalues in the central disk, satisfying 
\begin{equation}
 -z_{\rm fr}(\xi)\sim|\xi|^{8/5},\qquad  -z_{\rm df}(\xi)\sim|\xi|^2,  \qquad 0<|\xi|<\xi_*.
\end{equation} 
Their Riesz projections are denoted by $\mathbb{P}_{\rm fr}(\xi),\mathbb{P}_{\rm df}(\xi)$ and define the projection onto the remaining spectral subspace:  
\begin{equation}
 \mathbb{R}_\xi := I-\mathbb{P}_{\rm fr}(\xi)-\mathbb{P}_{\rm df}(\xi).
 \label{eq:remainder_projection}
\end{equation}
All three projections commute with $B(\xi)$.

For $0 < r\leq 1/2$, we write
$$ \mathcal{E}_r:=\{\omega<r\},\quad
 \mathcal{B}_r:=\{\omega\geq r\}.
$$

\medskip 

We will be using the weighted $L^2_p$-norm $\|\omega^{-\theta} h\|_{L^2_p}$ where 
$\theta =\frac{5 \rho}{3} \in (5/6,1)$   for 
$\frac{1}{2}< \rho <\frac{3}{5}$.  The reason for introducing this weight is the elementary edge-estimate 
$\| 
\mathbf{1}_{\mathcal{E}_r}h\|_{L^2_p}^2 \le r^{ 2 \theta }\| \omega^{-\theta}h\|_{L^2_p}^2$.  It will give the decay exponent $\rho = 3\theta/5$.
Also note that the restriction $\theta<1$ is compatible with the initial $\omega^{-1/2}L^\infty_p$ norm since 
\begin{equation}
\| \omega^{-\theta}f\|_{L^2_p}^2
 \leq \| \omega^{-1/2}f \|_{L^\infty_p}^2
       \int_\mathbb{T} \omega^{1-2\theta} dp
 \lesssim_\theta \| \omega^{-1/2}f \|_{L^\infty_p}^2.
 \end{equation}

 We decompose  
$ h=h^\parallel + h^{\perp_a}$,  where 
  $ h^\parallel = \Pi_a h$ is the Kernel component in the weighted geometry $L_a^2$, and $h^{\perp_a}=(I-\Pi_a)h$ is its weighted orthogonal component. Here $\Pi_a$ is the orthogonal projection onto $\ker L$ in $L^2_a$, whereas $\Pi$ denotes the orthogonal projection in
flat $L^2_p$.

We start with the following lemma, which collects the eigenvector and projection estimates needed below.

\begin{lemma}\label{lem:estimates_linear}
After decreasing $\xi_*$, the eigenvectors can be scaled so that
the following estimates hold for $0<|\xi|<\xi_*$:  
 \begin{itemize} 
 \item[(i)] For each index $j={\rm df}, {\rm fr}$:    
 $$ u_j(\xi) \to \psi_j \quad \text{in } \ L^2_p,\qquad \|u_j\|_{\mathscr{Y}} \lesssim  |\xi |^{-1/3}\ \ \text{ and } \quad \| \omega^{-\theta} u_j\|_{L^2_p}
 \lesssim_\rho |\xi|^{-2/5}.$$
\item[(ii)] For each index $j={\rm df}, {\rm fr}$,  the projections $\mathbb{P}_j$ are uniformly bounded on the flat $L^2_p$ space. Also, if $f \in \mathscr{Y}$ and $\Pi f=0$, then 
\begin{align} 
\| \mathbb{P}_{\rm fr} f\|_{L^2_p} \lesssim |\xi|^{4/5} \| f \|_{\mathscr{Y}}, \qquad 
 \| \mathbb{P}_{\rm df}f\|_{L^2_p}
 \lesssim |\xi| \|f \|_{\mathscr{Y}},   \qquad  \| \omega^{-\theta} \mathbb{R}_\xi f \|_{L^2_p}
&\lesssim \| \omega^{-\theta} f \|_{L^2_p} 
 + \|f\|_{\mathscr{Y}}.
\end{align}
\item[(iii)] For every $h\in\operatorname{Ran}\mathbb{R}_\xi$,
\begin{equation}
 \| h^\parallel\|_{L^2_p}  \lesssim |\xi|^{-1/3} \| h^{\perp_a}\|_{L^2_a}.
 \label{eq:moment_control}
\end{equation}
\end{itemize} 
\end{lemma}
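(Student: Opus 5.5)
The plan is to work entirely from the explicit construction of Lemma~\ref{lem:u_reconstruction} and the Feshbach expansions of Lemma~\ref{lem:matrix expansion_xi small}.

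\emph{Eigenvector estimates (i).} Normalise $\zeta_j\in\ker\mathscr{F}_\xi(z_j)$ as $\zeta_j=J c_j$ with $c_j$ a unit vector aligned with the $j$-th coordinate. By \eqref{eq:matrix_expansion}, the kernel vector of $\mathfrak{F}_\xi(z_j)$ has its off-coordinate of size $\mathcal{O}(|\xi|^{9/5})/|\xi|^{8/5}=\mathcal{O}(|\xi|^{1/5})\to0$. So $\zeta_j\to e_j$ in $L^2_p$. Next, $\mathsf{Q}w_j=-\mathscr{W}_\xi^{-1}\mathsf{Q}\mathfrak{m}_\xi(z_j)\zeta_j$ has norm $\lesssim\|\mathfrak{m}_\xi\zeta_j\|\lesssim|\xi|^{4/5}$, so $w_j\to e_j$. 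We then write
\[
u_j=\frac{a^{1/2}}{a+z_j+i\xi v}\mathscr{K}w_j=a^{-1/2}(1-\mathfrak{m}_\xi(z_j))\mathscr{K}w_j,
\]
and use $\mathscr{K}e_j=e_j$ to split
\[
u_j=\psi_j-a^{-1/2}\mathfrak{m}_\xi e_j+a^{-1/2}(1-\mathfrak m_\xi)\mathscr{K}(w_j-e_j).
\]
\begin{itemize}
\item The term $a^{-1/2}\mathfrak{m}_\xi e_j=\mathfrak m_\xi\psi_j\to0$ in $L^2_p$ by dominated convergence, using \eqref{eq: pointwise bound for m}.
\item The last term is the delicate one, since $a^{-1/2}$ is unbounded. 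The multiplier $|a^{1/2}/(a+z+i\xi v)|\lesssim a^{1/2}/(a+|\xi|)$ gives $\lesssim|\xi|^{-1/2}\|w_j-e_j\|$, which is not enough. Instead I would use the smoothing/edge bounds of $\mathscr K$ from the Appendix, in the form $|\mathscr{K}g(p)|\lesssim a^{1/2}(p)\,\|g\|$-type pointwise control. This holds because $\mathscr K=a^{-1/2}Ka^{-1/2}$ and $K$ maps $L^2$-weighted functions into bounded ones. With it the factor $a^{-1/2}$ cancels, so the term is $\lesssim\|w_j-e_j\|\to0$.
\end{itemize}

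For the $\mathscr{Y}$ and $\omega^{-\theta}$ bounds, compute from the same formula:
\[
\|u_j\|_{\mathscr{Y}}^2\lesssim\int\frac{a}{|a+|\xi||^2}\,|\mathscr{K}w_j|^2a^{-1}\,dp\lesssim\int_{\mathbb T}\frac{dp}{(a+|\xi|)^2}\,a\,\|\cdot\|.
\]
Splitting at $\omega\sim|\xi|^{3/5}$ gives $\int (a+|\xi|)^{-1}dp\sim|\xi|^{-2/5}$ (on the edge $\int_0^{|\xi|^{3/5}}|\xi|^{-1}$, in the bulk $\int p^{-5/3}$). This yields a bound of order $|\xi|^{-1/5}$, which I expect to be within $|\xi|^{-1/3}$ after correctly tracking powers. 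Similarly $\|\omega^{-\theta}u_j\|^2\lesssim\int\omega^{-2\theta}a/(a+|\xi|)^2$; the edge part gives $|\xi|^{-2}|\xi|^{(3/5)(1+5/3-2\theta)}$, and one checks that this is $\lesssim|\xi|^{-4/5}$ since $\theta<1$.

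\emph{Projections (ii).} The Riesz projection has rank one, $\mathbb{P}_jf=\langle f,u_j^*\rangle u_j/\langle u_j,u_j^*\rangle$, where $u_j^*$ is the eigenvector of $B(\xi)^*=L+i\xi v$. By the reflection symmetry $h\mapsto h^\#$, $u_j^*=\overline{u_j}$ up to reflection, so the same estimates apply and $\langle u_j,u_j^*\rangle\to1$; this gives uniform $L^2$ boundedness. For $\Pi f=0$, $\langle f,\psi_j\rangle=0$, hence
\[
\langle f,u_j^*\rangle=\langle f,u_j^*-\psi_j\rangle=\langle f a^{-1/2},a^{1/2}(u_j^*-\psi_j)\rangle,
\]
which is bounded by $\|f\|_{\mathscr Y}\|a^{1/2}(u_j^*-\psi_j)\|=\|f\|_{\mathscr{Y}}\|w_j^*-e_j\|$. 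It remains to bound $\|w_j-e_j\|$, which is at most $\|\mathfrak m_\xi\zeta\|+$ the coefficient error. For fr this is $|\xi|^{4/5}$. For df it is $|\xi|$, using $\|\mathfrak m_\xi e_{\rm df}\|\lesssim|\xi|$; the fr-component of $c_{\rm df}$ is $\mathcal{O}(|\xi|^{9/5})/|\xi|^{8/5}$, which needs the sharper cross-entry bound. Multiply by $\|u_j\|\lesssim1$. For $\mathbb R_\xi$, the bound follows by the triangle inequality with $\|\omega^{-\theta}u_j\|\lesssim|\xi|^{-2/5}$ times $|\xi|^{4/5}$ or $|\xi|$.

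\emph{Moment control (iii), expected main obstacle.} For $h\in\operatorname{Ran}\mathbb{R}_\xi$ we have $\langle h,u_j^*\rangle=0$ for both $j$. Write $h^\parallel=\sum\alpha_k\psi_k$ via the weighted projection; then
\[
0=\langle h^\parallel,u_j^*\rangle+\langle h^{\perp_a},u_j^*\rangle.
\]
The $2\times2$ matrix $\langle\psi_k,u_j^*\rangle\to\delta_{jk}$ is invertible, so $|\alpha|\lesssim|\langle h^{\perp_a},u_j^*\rangle|$. Writing this as $\langle a^{1/2}h^{\perp_a},a^{-1/2}u_j^*\rangle$ gives the bound $\|h^{\perp_a}\|_{L^2_a}\|u_j\|_{\mathscr{Y}}\lesssim|\xi|^{-1/3}\|h^{\perp_a}\|_{L^2_a}$ by (i). The delicate point overall is obtaining the sharp $\mathscr Y$ bound on $u_j$, which requires pointwise edge control of $\mathscr K w_j$.
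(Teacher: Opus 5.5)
\textbf{Part (i): the pointwise bound on $\mathscr K$ is false.} Everything in (i) beyond $w_j\to e_j$ rests on your bound $|\mathscr K g(p)|\lesssim a(p)^{1/2}\|g\|_{L^2_p}$.

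\emph{Why it fails.} By duality the bound says $\int_{\mathbb T}|k(p,q)|^2a(q)^{-1}\,dq\lesssim a(p)^2\sim\omega(p)^{10/3}$, where $k$ is the kernel of $K$. Near the corner $(p,q)\to(0,2\pi)$ write $\epsilon=\omega(p)$, $s=\omega(q)$. The bound quoted in the proof of Lemma~\ref{lem:kernel_splitting_linear} is two-sided there, by \eqref{eq:edge_sharp_F_P} and because the RJ coefficient is bounded below. So the $K_2$ kernel has size $\epsilon^{3/2}s^{-1/2}$ for $C\epsilon\le s\le c\,\epsilon^{1/3}$. This range lies in $(r_-(p),r_+(p))$, where the $K_1$ kernel vanishes by \eqref{eq:appendix_K1_bound}, so there is no cancellation. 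The integral is therefore $\gtrsim\epsilon^{3}\int_{C\epsilon}^{c\epsilon^{1/3}}s^{-8/3}\,ds\sim\epsilon^{4/3}\gg\epsilon^{10/3}$. In fact $\mathscr K$ does not even map $L^2_p$ into $L^\infty_p$. Likewise $\frac{a^{1/2}}{a+z_j+i\xi v}\mathscr K$ is not bounded on $L^2_p$ uniformly as $\xi\to0$, since its limit $a^{-1}Ka^{-1/2}$ is unbounded.

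\emph{Consequences.} Your estimate of the last term in the decomposition of $u_j$ is unsupported, and with it the uniform $L^2_p$ bound, the convergence $u_j\to\psi_j$, and your $\mathscr Y$ and $\omega^{-\theta}$ bounds. Even granting your bound, the $\omega^{-\theta}$ integrand should be $\omega^{-2\theta}a^2/(a+|\xi|)^2$. With the integrand you wrote, the result $|\xi|^{-2/5-2\rho}$ is not $\lesssim|\xi|^{-4/5}$.

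\emph{The missing idea.} For each fixed $\xi\neq0$, $u_j$ itself lies in $L^2_p$ (Lemma~\ref{lem:u_reconstruction}). So you may apply Lemma~\ref{lem:app_Kernelbound_edge} to $Ku_j$ in $u_j=Ku_j/(a+z_j+i\xi v)$, with H\"older for $1/q=1/2-1/R$.
\begin{itemize}
\item On the edge: $\|\mathbf 1_{\mathcal E_r}u_j\|_{L^2_p}\le Cr^{1/3-(\varepsilon+1/R)}\|u_j\|_{L^2_p}$.
\item On the bulk: $\|\mathbf 1_{\mathcal B_r}u_j\|_{L^2_p}\lesssim r^{-5/6}\|w_j\|_{L^2_p}$.
\item Absorbing the edge term for a small fixed $r$ gives $\sup_\xi\|u_j\|_{L^2_p}<\infty$.
\end{itemize}
Convergence $u_j\to\psi_j$ follows from the same split. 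The $\mathscr Y$ and $\omega^{-\theta}$ bounds follow by reinserting the uniform $L^2_p$ bound into the same H\"older estimate with the extra weights $a^{-1/2}$ and $\omega^{-\theta}$.

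\textbf{Part (ii): subtracting only $\psi_j$ is too crude.} The difference $w_j-e_j$ contains the kernel-direction coefficient $c_{{\rm df},{\rm fr}}$ (resp.\ $c_{{\rm fr},{\rm df}}$). The matrix expansion bounds it only by $\mathcal O(|\xi|^{1/5})$; the sharper $[\mathfrak F_\xi]_{\rm fr,df}=\smallO(|\xi|^{9/5})$ improves this only to $\smallO(|\xi|^{1/5})$. So $\|w_j-e_j\|_{L^2_p}$ is neither $\mathcal O(|\xi|^{4/5})$ for fr, as you assert, nor $\mathcal O(|\xi|)$ for df.

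\emph{The remedy.} Use $\Pi f=0$ against the whole kernel part: $\langle f,\overline{u_j}\rangle=\langle f,\overline{u_j}^{\perp_a}\rangle$. Then only $\|u_j^{\perp_a}\|_{L^2_a}=\|\mathsf Qw_j\|_{L^2_p}$ enters. The paper bounds this by $\sqrt{-z_j}$, using the energy identity $-\langle L\overline{u_j},\overline{u_j}\rangle=-z_j\|u_j\|_{L^2_p}^2$ and weighted coercivity. Your Schur relation would also work, since $\|\mathsf Qw_j\|\lesssim\|\mathfrak m_\xi(z_j)\mathsf Pw_j\|$. There the $\mathcal O(|\xi|^{1/5})$ coefficient only multiplies $\|\mathfrak m_\xi e_{\rm df}\|\lesssim|\xi|$ (resp.\ $\|\mathfrak m_\xi e_{\rm fr}\|\lesssim|\xi|^{4/5}$). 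Either route still needs the uniform $L^2_p$ bound and $\langle u_j,\overline{u_j}\rangle\to1$ from (i).

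\textbf{Part (iii).} Your argument is the paper's and is fine once (i) is established.
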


Before we proceed with the proof of this lemma, let us remark that the last estimate \eqref{eq:moment_control} is the inhomogeneous substitute for using preservation of the collision moments in the homogeneous problem. The subspace $\operatorname{Ran}\mathbb{R}_\xi$ is invariant under the evolution 
$e^{tB(\xi)}$, so \eqref{eq:moment_control} remains valid
throughout its evolution.

\begin{proof}
\underline{Item (i):} Set $ w_j:=a^{1/2}u_j$. We first show that  $a^{1/2}u_j(\xi) \to a^{1/2}\psi_j$ in  $L^2_p$.  The equation for $w_j$ is $\mathscr{M}_\xi(z_j)w_j=0$. Taking its microscopic 
$\mathsf{Q}$-component gives
$$ \mathsf{Q} w_j= -\mathscr{W}_\xi(z_j)^{-1}
      \mathsf{Q} \mathfrak{m}_\xi(z_j)\mathsf{P} w_j.
$$
In particular, $\mathsf{P} w_j\ne0$, since otherwise
$w_j=0$ and hence $u_j=0$, contradiction. Taking its $\mathsf{P}$-component gives
$$ \mathscr{F}_\xi(z_j)\mathsf{P} w_j=0.
$$
Write $\mathsf{P} w_j= c_{{\rm fr},j} e_{\rm fr}
 + c_{{\rm df},j}e_{\rm df}$, for  $e_j=a^{1/2} \psi_j$ which implies that the coefficient vector belongs in the 
$\ker\mathfrak{F}_\xi(z_j)$ \footnote{After applying $J^*$ to
$\mathscr{F}_\xi(z_j)\mathsf{P} w_j=0$, we have this matrix equation.} :  
 $$ \mathfrak{F}_\xi(z_j) (c_{{\rm fr},j}, c_{{\rm df},j})^T = 0. $$  

Now, at $j= {\rm fr}$, the matrix estimates \eqref{eq:matrix_expansion} give
$  \left|   \big(\mathfrak{F}_\xi(z_{\rm fr})\big)_{\rm df,df} \right|
    \gtrsim |\xi|^{8/5}$, and $
  \big(\mathfrak{F}_\xi(z_{\rm fr})\big)_{\rm df,fr} = \mathcal{O}(|\xi|^{9/5}).$ Also, looking at the second row of the above matrix equation we have $$[\mathfrak{F}_\xi(z_{\rm fr})]_{\rm fr,{\rm fr}}\ c_{{\rm fr}, {\rm fr} } + [\mathfrak{F}_\xi(z_{\rm fr})]_{\rm fr,df}\ c_{{\rm df},{\rm fr}} =0. $$
So, the first diagonal entry is lower bounded and thus we may scale the e-vector $u_{\rm fr}$ so that 
$c_{{\rm fr},{\rm fr}}=1$. The second row of the 
matrix equation then gives
$c_{{\rm df},{\rm fr}}= \mathcal{O} (|\xi|^{1/5})$. 

Similarly, at $z_{\rm df}$: the first  diagonal entry has order bounded below by a positive 
constant times $|\xi|^{8/5}$, and thus $c_{{\rm df},{\rm df}}\neq 0$. We may therefore scale $u_{\rm df}$ 
so that $c_{{\rm df},{\rm df}}=1$, and then the first row gives $c_{{\rm fr},{\rm df}}=\mathcal{O} (|\xi|^{1/5})$. 
Thus $\mathsf{P} w_{{\rm fr}} = e_{{\rm fr}} + \mathcal{O}(|\xi|^{1/5}) e_{{\rm df}}$ and $\mathsf{P} w_{{\rm df}} = \mathcal{O}(|\xi|^{1/5}) e_{{\rm fr}} +  e_{{\rm df}}$. In other words: 
$$ \mathsf{P} w_j  \to e_j
    \quad\text{in }L^2_p.
$$
We already know that the microscopic inverse is uniformly bounded, and the multiplier 
$\mathfrak{m}_\xi(z_j)$ is uniformly bounded and converges strongly to zero, so 
\begin{align*}
 \|\mathsf{Q} w_j\|_{L^2_p}
 \lesssim \|\mathfrak{m}_\xi(z_j)\mathsf{P}w_j\|_{L^2_p} \lesssim
\|\mathsf{P} w_j-e_j\|_{L^2_p}
+\|\mathfrak{m}_\xi(z_j)e_j\|_{L^2_p}
\to  0.
\end{align*}

It follows indeed that
\begin{equation}
a^{1/2}u_j=w_j \to a^{1/2}\psi_j
    \quad\text{in }L^2_p.
\label{eq:linear_vectors_weighted_convergence}
\end{equation}
In particular, 
$\|a^{1/2} u_j\|_{L^2_p} $ is uniformly bounded. We want now to pass to the unweighted convergence. From the previous section we already got $u_j \in L^2_p$ for each $\xi\neq 0$. This allows us to apply the smoothing effect of $K$ from Appendix \ref{app:Kernel_smoothing_edges} for $K$, using that $u_j =\frac{Ku_j}{a+z_j+i\xi v} = \frac{a^{1/2}}{a+z_j+i\xi v} \mathscr{K} w_j$, and aiming to get the boundedness uniformly in $\xi$.

Choose $\varepsilon,R$ so that $\varepsilon R>2$ and 
$$ \varepsilon+\frac{1}{R} < \min\left\{\frac{1}{20},1-\theta \right\}.
$$  
On the edge $\mathcal{E}_r=\{\omega < r\}$, H\"older's inequality with $\frac{1}{q} = \frac{1}{2}-\frac{1}{R}$,
 and Lemma \ref{lem:app_Kernelbound_edge}, give
\begin{align*}
\|\mathbf{1}_{\mathcal{E}_r} u_j\|_{L^2_p}
&\lesssim \left\|
\mathbf{1}_{\mathcal{E}_r}     \frac{\omega^{3/2-\varepsilon}}{ a + |\xi| }
    \right \|_{L^q_p}
\|\omega^{-3/2+\varepsilon} Ku_j\|_{L^R_p}\lesssim  \|\mathbf{1}_{\mathcal{E}_r}
        \omega^{-1/6 - \varepsilon}\|_{L^q_p}
    \|u_j\|_{L^2_p}, 
\end{align*}
where we used the lower bound \eqref{eq:lowerbound_freedisk} and $a + |\xi| \geq a \gtrsim \omega^{5/3}$. 
Also, 
\begin{align*}
 \|\mathbf{1}_{\mathcal{E}_r}
        \omega^{-1/6-\varepsilon}\|_{L^q_p}
    &\lesssim
    \left(\int_0^{r}
        x^{-q (1/6 + \varepsilon)} dx \right)^{1/q}\lesssim r^{1/q-1/6-\varepsilon},
\end{align*}
which is finite  because $\varepsilon+1/R< 1/3$, due to the choices of the parameters. 
Thus on the edge: 
\begin{equation}\label{eq:edgeestimate_2_linear}
\|\mathbf{1}_{\mathcal{E}_r}u_j\|_{L^2_p}
    \leq C r^{1/3-(\varepsilon+\tfrac{1}{R})}\|u_j\|_{L^2_p},
\end{equation}
while on the bulk $\mathcal{B}_r$, since $a \gtrsim r^{5/3}$ there, we have 
$$\|\mathbf{1}_{\mathcal{B}_r}u_j \|_{L^2_p}
\lesssim r^{-5/6}\|a^{1/2}u_j\|_{L^2_p}
 =r^{-5/6}\|w_j\|_{L^2_p}.
$$
Adding these together: 
$$ \|u_j\|_{L^2_p} \leq C r^{1/3-(\varepsilon+ \tfrac{1}{R})} \|u_j\|_{L^2_p} + C r^{-5/6} \|w_j\|_{L^2_p}.
$$
From this we see that $u_j$ is $\xi$-uniformly bounded, i.e. $\sup_{0<|\xi|<\xi_*}\| u_j(\xi)\|_{L^2_p} < \infty,$ since we can choose $r$ small enough and since $w_j$ is uniformly bounded.  
Now notice that on each fixed bulk, the weighted convergence 
\eqref{eq:linear_vectors_weighted_convergence}  implies
$$ \|\mathbf{1}_{\mathcal{B}_r}(u_j-\psi_j)\|_{L^2_p} \lesssim r^{-5/6}
 \| w_j - a^{1/2} \psi_j\|_{L^2_p} \to 0 \quad \text{ as } |\xi|\to 0. 
$$
On the edges from \eqref{eq:edgeestimate_2_linear}: 
$$\|\mathbf{1}_{\mathcal{E}_r}(u_j-\psi_j)\|_{L^2_p} \leq C r^{1/3-(\varepsilon+ \tfrac{1}{R})}
+\|\mathbf{1}_{\mathcal{E}_r}\psi_j\|_{L^2_p}.$$

Thus altogether 
$$ \limsup_{|\xi|\to 0} \|u_j -  \psi_j\|_{L^2_p} \leq C r^{1/3-(\tfrac{1}{R} + \varepsilon)}
+ \|\mathbf{1}_{\mathcal{E}_r}\psi_j\|_{L^2_p}, $$
which by letting $r\to 0$ proves the first claim of item (i). Since $\|u_j\|_{\mathscr{Y}} = \|a^{-1/2}u_j\|_{L^2_p}$, from 
Lemma \ref{lem:app_Kernelbound_edge}, we have
\begin{align*}
 \|u_j\|_{\mathscr{Y}} &\lesssim
  \left\| \frac{a^{-1/2}\omega^{3/2-\varepsilon}}{a+|\xi|}
 \right\|_{L^q_p} \|\omega^{-3/2+\varepsilon} K u_j\|_{L^R_p} \lesssim \left\|
 \frac{\omega^{2/3-\varepsilon}}{\omega^{5/3}+ |\xi| }\right\|_{L^q_p}, 
\end{align*}
where in the last step we used
$a^{-1/2} \sim \omega^{-5/6}$ and the uniform
$L^2_p$ bound on $u_j$ established above. Also,
$$ \|\omega^{-\theta} u_j\|_{L^2_p}
\lesssim
 \left\| \frac{\omega^{3/2-\varepsilon-\theta}}{\omega^{5/3}+ |\xi|}
    \right\|_{L^q_p}.
$$
On the fixed bulk $\mathcal{B}_r$, both norms
are uniformly bounded. On the edges, to estimate each one of these integrals, we separate when $\omega<|\xi|^{3/5}$ or when $\omega >|\xi|^{3/5}$. We also decrease $\xi_*$ if necessary in order to have $|\xi|^{3/5}<r$. 
For the first integral:    
$$ \int_{0}^{|\xi|^{3/5}} \frac{\omega^{(2/3-\varepsilon)q }}{|\xi|^q} d \omega +
\int_{|\xi|^{3/5}}^r
 \omega^{-q(1+\varepsilon)} d\omega \leq 
\mathcal{O} \left( |\xi|^{-q+\tfrac{3}{5}\left(\tfrac{2}{3}-\varepsilon \right)q +\tfrac{3}{5}} \right),   
$$
where we bounded the denominator $\omega^{5/3}+|\xi| \geq |\xi|$, and since both integrals have the same order in $\xi$. After taking the $1/q$-root and using $1/q = 1/2 - 1/R$, gives $|\xi|^{-\tfrac{3}{10} - \tfrac{3}{5}(\varepsilon + R^{-1})} \leq |\xi|^{-1/3}$, when for example  $\varepsilon+1/R< 1/20 $.    
For the second integral, the weighted $L^2$-norm of item (i), the relevant integral is 
$$
|\xi|^{-q}\int_0^{|\xi|^{3/5}} 
\omega^{(3/2-\varepsilon-\theta)q}\ d\omega
 + \int_{|\xi|^{3/5}}^r
    \omega^{-q(1/6+\varepsilon+\theta)} d\omega
 \lesssim |\xi|^{-q+\frac{3}{5}[(3/2-\varepsilon-\theta)q+1]}, 
$$  
which again after taking the $1/q$-root, gives $ |\xi|^{\tfrac{1}{5}-\rho-\tfrac{3}{5} (\varepsilon + R^{-1})} \leq |\xi|^{-2/5}$, since $\theta=5\rho/3$ and $\varepsilon + R^{-1}<1-\theta$.
These conclude point (i). 

\underline{Item (ii):} Since $L$ is real and self-adjoint,
$ B(\xi)^*=L+i\xi v$ and the eigenvalues $z_j$ are real, so
$\overline{u_j}$ satisfies
$$ B(\xi)^*\overline{u_j} =z_j \overline{u_j}.
$$
Because $z_j$ is algebraically simple, its Riesz projection is of rank $1$ and in particular  
\begin{equation}\label{eq:linear_projection_formula}
\mathbb{P}_j f =
\frac{\langle f,\overline{u_j}\rangle}
 {\langle u_j,\overline{u_j} \rangle} u_j.
\end{equation}
Note that the denominator satisfies 
$ \langle u_j,\overline{u_j} \rangle
=\int_{\mathbb{T} } u_j^2 dp
\to \int_{\mathbb{T} }\psi_j^2 dp=1, 
$
since from item (i): $\left|\int(u_j^2-\psi_j^2) dp\right| \leq
 \|u_j-\psi_j\|_{L^2_p}(\|u_j\|_{L^2_p} +\|\psi_j\|_{L^2_p})
    \to 0$. 
So for $\xi$ sufficiently small, the denominators in
\eqref{eq:linear_projection_formula} are bounded away from zero. 
Together with the uniform bound of the $  \|u_j\|_{L^2_p}$ from item (i), 
this proves uniform boundedness of $\mathbb{P}_j$ on the flat $L^2_p$. It also implies uniform boundedness
of $\mathbb{R}_\xi$ on that space. 

Next, from the adjoint eigenvalue equation we have (taking real parts)  
$$ -\langle L \overline{u_j},\overline{u_j}\rangle
= -z_j\|\overline{u_j}\|_{L^2_p}^2.
$$
The left-hand side is equal to $-\langle L \overline{u_j}^{\perp_a},\overline{u_j}^{\perp_a}\rangle$, since $\overline{u_j}^{\parallel}$ is in $\operatorname{ker} L$. Now the weighted coercivity (see \cite[Proposition 4.1]{GLM26}) gives: $$-\langle L \overline{u_j}^{\perp_a},\overline{u_j}^{\perp_a}\rangle \gtrsim \int a(p) |\overline{u_j}^{\perp_a}|^2 dp  = \|\overline{u_j}^{\perp_a}\|_{L^2_a}^2, $$
and the uniform $L^2$-bound of $u_j$, imply 
$$
    \|\overline{u_j}^{\perp_a}\|_{L^2_a}
    \lesssim \sqrt{-z_j}.
$$
If $\Pi f=0$, then $f$ is $L^2$ flat-orthogonal to
$\overline{u_j}^\parallel\in\ker L$. Thus, by the Cauchy-Schwarz inequality, we have 
\begin{align*} 
|\langle f,\overline{u_j}\rangle| = 
|\langle f,\overline{u_j}^{\perp_a} \rangle| \leq 
\|a^{-1/2}f\|_{L^2_p} 
\|a^{1/2}\overline{u_j}^{\perp_a}\|_{L^2_p}\lesssim
 \sqrt{-z_j} \|f\|_{\mathscr{Y}}.
\end{align*}

Inserting this to the Riesz projection 
\eqref{eq:linear_projection_formula} gives indeed 
$$ \|\mathbb{P}_{\rm fr}f\|_{L^2_p}
\lesssim |\xi|^{4/5}\|f\|_{\mathscr{Y}},
 \qquad
 \|\mathbb{P}_{\rm df}f\|_{L^2_p}
 \lesssim |\xi| \|f\|_{\mathscr{Y}}.
$$
Combining now these with the bounds in item (i), we obtain
$$ 
\|\omega^{-\theta} \mathbb{P}_{\rm fr} f \|_{L^2_p}
\lesssim_\rho 
|\xi|^{2/5}\|f\|_{\mathscr{Y}},
\qquad
 \|\omega^{-\theta} \mathbb{P}_{\rm df} f \|_{L^2_p}
 \lesssim_\rho |\xi|^{3/5}
 \|f\|_{\mathscr{Y}}.
$$
Finally, since $\mathbb{R}_\xi f = f -\mathbb{P}_{\rm fr}f-\mathbb{P}_{\rm df} f$, triangle inequality gives 
$$
\|\omega^{-\theta} \mathbb{R}_\xi f \|_{L^2_p} \lesssim_\rho \|\omega^{-\theta} f \|_{L^2_p} +
\|f\|_{\mathscr{Y}}.
$$
\underline{Item (iii):} Let $h \in \operatorname{Ran} \mathbb{R}_\xi$.
The spectral projections associated with distinct
eigenvalues satisfy
$\mathbb{P}_j\mathbb{R}_\xi=0$.
Thus from  \eqref{eq:linear_projection_formula} it must hold that 
$\langle h, \overline{u_j} \rangle = 0$,
for both $j \in \{\mathrm{fr},\mathrm{df}\}$. 
Since $h^\parallel \in \operatorname{Ker} L$, we also write
$$ h^\parallel= c_1\phi_1 + c_2\phi_2,
\qquad \phi_1 = \mathfrak{f}, \quad
\phi_2 = \omega\mathfrak{f}.
$$
Since $h=h^\parallel+h^{\perp_a}$, these mean that 
$$\sum_{i=1}^2
c_i\langle\phi_i,\overline{u_j} \rangle
= -\langle h^{\perp_a},\overline{u_j} \rangle,
\quad j \in\{\mathrm{fr},\mathrm{df}\}.
$$
The coefficient matrix converges, as $|\xi|\to0$, to $\big[\langle\phi_i,\psi_j\rangle\big]_{j,i}$, and 
this limiting matrix is invertible because
$\{\phi_1,\phi_2\}$ and $\{\psi_{\rm fr},\psi_{\rm df}\}$
are bases of the same space $\ker L$.
For $|\xi|$ sufficiently small, the nearby matrices have uniformly bounded inverses. We see this from a Neumann series argument for example. Thus,
$$ \|h^\parallel\|_{L^2_p} \lesssim |c_1|+|c_2| \lesssim \sum_j |\langle h^{\perp_a}, \overline{u_j} \rangle|.
$$
Finally, inserting the weight $a$ and applying Cauchy-Schwarz together with 
the estimates in item (i), give
\begin{align*}
\|h^\parallel\|_{L^2_p}
\lesssim \|h^{\perp_a}\|_{L^2_a}
    \sum_j \|\overline{u_j} \|_{\mathscr{Y}} \lesssim
    |\xi|^{-1/3} \|h^{\perp_a}\|_{L^2_a}.
\end{align*}
\end{proof}

We continue by observing that with the same exponents $R>2$, $\varepsilon R >2$, $\frac{1}{q} = \frac{1}{2} - \frac{1}{R}$ and $\varepsilon+1/R < \min\{1/20, 1-\theta\}$,  we also have 
\begin{align}\label{eq:weighted_gain}
\| \omega^{-\theta} e^{-t(a+i\xi v)} Kh \|_{L^2_p}
 &\leq \| \omega^{3/2-\varepsilon-\theta}e^{-ta}\|_{L^q_p}
\| \omega^{-3/2+\varepsilon}Kh \|_{L^R_p} 
 \lesssim  \langle t \rangle^{-3/5} \| h \|_{L^2_p}.
\end{align}

Indeed this follows by an application of the appendix smoothing estimate \eqref{lem:app_Kernelbound_edge} and H\"{o}lder's Inequality.   
For the last inequality we estimate the norm $\| \omega^{3/2-\varepsilon-\theta}e^{-ta}\|_{L^q_p}$: at the bulk the decay is exponential since $a$ is lower bounded while at the edge, we may substitute $y := t\omega^{5/3}$ and get 
$$
\|\omega^{3/2-\varepsilon-\theta}e^{-ta}\|_{L^q_p}
 \lesssim \langle t \rangle^{-\frac{3}{5}(3/2-\varepsilon-\theta + 1/q)}.  
$$ The decay exponent is  
$6/5-\rho-(3/5)(\varepsilon+1/R)>3/5$, thus the \eqref{eq:weighted_gain} final exponent.  
The estimate is uniform in $\xi$, since the transport term has modulus $1$. 

In the following abstract lemma we get $L^2_p$-decay estimates under the general assumption of having an
energy functional $H$ which is
(i) equivalent to the squared $L^2_p$ norm,
(ii) dissipates along the flow, and
(iii) is controlled by its dissipation and the weighted norm  
$\|\omega^{-\theta}g(t)\|_{L^2_p}$. 
When applying the lemma later, the energy $H$ and the
parameter $\sigma$ will need to be adapted to the frequency region, 
but the lemma applies in both cases.

\begin{lemma}\label{lem:abstract_energy}
Fix $1/2<\rho<3/5$ and $\theta=5\rho/3$. Let $g(t)=e^{tB(\xi)} g_0$ with $\| \omega^{-\theta} g_0 \|_{L^2_p}<\infty$. 
Suppose there are $0 < \sigma \leq 1$, a locally absolutely continuous energy $ H(t)\sim\|g(t)\|_{L^2_p}^2$, and  $ D(t)\geq 0$ such that for fixed $r_1>0$,  
\begin{align}
 &H'(t) \leq -c D(t), 
 \quad \text{ and } \label{eq:abstract_dissipation}
 \\
& H(t)\leq C(r^{-5/3}+\sigma^{-1})D(t) +Cr^{2\theta} \| \omega^{-\theta} g(t) \|_{L^2_p}^2, \qquad 0<r\leq r_1.  
\label{eq:abstract_bound}
\end{align}
Then
\begin{align}
 &\sup_{t\geq 0 } \| \omega^{-\theta} g(t) \|_{L^2_p}
 \lesssim \| \omega^{-\theta} g_0 \|_{L^2_p} + \sigma^{-2/5} \| g_0 \|_{L^2_p},
 \label{eq:propagated_weight}\\
 &\sup_{t \geq 0}\ \langle t \rangle^\rho \| g(t) \|_{L^2_p}
 \lesssim  
 \| \omega^{-\theta} g_0 \|_{L^2_p}
 + \sigma^{-\rho} \| g_0 \|_{L^2_p}. 
 \label{eq:energy_estimate}
\end{align}
\end{lemma}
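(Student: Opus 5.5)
The plan is to run a bootstrap that couples the energy inequality \eqref{eq:abstract_dissipation}--\eqref{eq:abstract_bound} with a Duhamel representation for the weighted norm. For $T>0$ set $M(T):=\sup_{0\le t\le T}\|\omega^{-\theta}g(t)\|_{L^2_p}$. This is finite for each fixed $T$, since $e^{tB(\xi)}$ splits as the free part $e^{-t(a+i\xi v)}$ plus the Duhamel term treated below. First I prove the $L^2_p$ decay \eqref{eq:energy_estimate} with $M(T)$ on the right. Then I plug that decay into Duhamel to control $M(T)$ and close \eqref{eq:propagated_weight}.

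\textbf{Step 1 (time-dependent splitting in the energy ODE).} Fix a large constant $A\ge 1$, to be chosen later, and set $t_0:=A\sigma^{-1}$. Choose $r=r(t)$ through
\[
r(t)^{5/3}=\frac{\kappa}{t+t_0},
\]
with $\kappa>2\rho$ fixed. Then $r\le \sigma^{3/5}$, so $\sigma^{-1}\le r^{-5/3}$, and $r\le r_1$ once $A$ is large. With this choice \eqref{eq:abstract_bound} gives $D\gtrsim r^{5/3}\bigl(H-Cr^{2\theta}M(T)^2\bigr)$. Since $r^{2\theta}=(\kappa/(t+t_0))^{6\theta/5}\sim (t+t_0)^{-2\rho}$, inserting this into \eqref{eq:abstract_dissipation} yields, for $t\le T$,
\[
H'(t)\le -\frac{c\kappa}{t+t_0}H(t)+C\,(t+t_0)^{-1-2\rho}M(T)^2 .
\]
I arrange $c\kappa>2\rho$ by taking $\kappa$ large. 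Multiplying by $(t+t_0)^{c\kappa}$ and integrating gives
\[
H(t)\lesssim \Bigl(\tfrac{t_0}{t+t_0}\Bigr)^{c\kappa}H(0)+(t+t_0)^{-2\rho}M(T)^2 .
\]
Using $H\sim\|g\|_{L^2_p}^2$ and $t_0^\rho\lesssim_A\sigma^{-\rho}$, this becomes
\[
\|g(t)\|_{L^2_p}\lesssim (t+t_0)^{-\rho}\bigl(A^{\rho}\sigma^{-\rho}\|g_0\|_{L^2_p}+M(T)\bigr).
\]
Since $t_0\ge 1$, we have $\langle t\rangle\lesssim t+t_0$.

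\textbf{Step 2 (weighted norm via Duhamel).} Write $B(\xi)=-(a+i\xi v)+K$, so that
\[
g(t)=e^{-t(a+i\xi v)}g_0+\int_0^t e^{-(t-s)(a+i\xi v)}Kg(s)\,ds .
\]
The free term has weighted norm at most $\|\omega^{-\theta}g_0\|_{L^2_p}$. For the integral term, the smoothing estimate \eqref{eq:weighted_gain} bounds its weighted norm by $\int_0^t\langle t-s\rangle^{-3/5}\|g(s)\|_{L^2_p}\,ds$. Inserting Step 1 reduces everything to the convolution integral
\[
I(t):=\int_0^t\langle t-s\rangle^{-3/5}(s+t_0)^{-\rho}\,ds\lesssim t_0^{2/5-\rho},
\]
which I check as follows.
\begin{itemize}
\item For $t\le t_0$, bound it by $t_0^{-\rho}\langle t\rangle^{2/5}$.
\item For $t\ge t_0$, split at $s=t/2$ in the usual way, which gives $t^{2/5-\rho}\le t_0^{2/5-\rho}$. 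This uses $\rho>1/2>2/5$.
\end{itemize}
Hence
\[
M(T)\le \|\omega^{-\theta}g_0\|_{L^2_p}+C\,t_0^{2/5-\rho}A^{\rho}\sigma^{-\rho}\|g_0\|_{L^2_p}+C\,t_0^{2/5-\rho}M(T).
\]
Because $t_0^{2/5-\rho}\le A^{2/5-\rho}$, taking $A$ large absorbs the last term into the left-hand side. Moreover $t_0^{2/5-\rho}\sigma^{-\rho}\sim_A\sigma^{-2/5}$. Letting $T\to\infty$ gives \eqref{eq:propagated_weight}. Substituting this back into Step 1 gives \eqref{eq:energy_estimate}, since $\sigma^{-2/5}\le\sigma^{-\rho}$ for $\sigma\le1$.

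\textbf{Main obstacle.} The delicate point is closing the circularity: the energy decay needs the weighted norm, and the weighted norm needs the decay. The device is the time shift $t_0=A\sigma^{-1}$ with $A$ large. It makes the convolution constant $t_0^{2/5-\rho}$ small enough to absorb $M(T)$. At the same time it keeps $r\le\sigma^{3/5}$, so that the $\sigma^{-1}$ term in \eqref{eq:abstract_bound} is dominated by $r^{-5/3}$, and it produces exactly the powers $\sigma^{-2/5}$ and $\sigma^{-\rho}$ claimed in the statement.
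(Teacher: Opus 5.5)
Your proof is correct and follows essentially the same route as the paper: a time-dependent splitting radius $r(t)^{5/3}\sim (t+t_0)^{-1}$ with $t_0\sim\sigma^{-1}$ in the energy inequality, followed by Duhamel with the weighted gain \eqref{eq:weighted_gain} and a convolution estimate to close the weighted bound. The only difference is the absorption step. You keep the shifted profile $(s+t_0)^{-\rho}$ and absorb $M(T)$ uniformly in time through the smallness of $t_0^{2/5-\rho}\le A^{2/5-\rho}$. The paper instead weakens the profile to $\langle s\rangle^{-\rho}$, absorbs only for $t\ge T_0$ where $\langle t\rangle^{2/5-\rho}$ is small, and treats $[0,T_0]$ separately by the $L^2$ contraction bound; your version avoids that extra short-time step.
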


\begin{proof} 
We start by multiplying \eqref{eq:abstract_bound}  
by $\lambda>0$ and adding it with 
\eqref{eq:abstract_dissipation}. This gives  
$$ H'(t) + \lambda H(t) \leq 
\left[-c+C\lambda(r^{-5/3}+\sigma^{-1})\right]D(t)
 + C \lambda r^{2\theta} \| \omega^{-\theta} g\|_{L^2_p}^2.
$$
We can make the coefficient of $D$ negative: For example if we wite $r^{5/3}=C_0\lambda$,  $0 < \lambda \leq c_0 \sigma $, for $C_0$ chosen sufficiently large and a $c_0>0$ chosen sufficiently small so that 
$$ C \lambda r^{-5/3} = \frac{C}{C_0}\leq \frac{c}{2},
    \qquad
    C\lambda \sigma^{-1}\leq Cc_0\leq \frac{c}{2}.
$$

Then 
\begin{equation}
 H'+\lambda H \leq C \lambda r^{2\theta} \| \omega^{-\theta} g\|_{L^2_p}^2 =  C_\rho \lambda^{1+2\rho} \|\omega^{-\theta} g\|_{L^2_p}^2.
\label{eq:abstract_differential_decay}
\end{equation}
since $r=(C_0\lambda)^{3/5}$ and $\theta= 5\rho/3$. 

If $\lambda = \lambda(t)$ and $\Lambda (t) := \int_0^t \lambda(\tau) d \tau$ so that $\Lambda(0)=0 $ and $\Lambda'=\lambda$, then the equation \eqref{eq:abstract_differential_decay} becomes 
$$ \frac{d}{dt}\left( e^{\Lambda(t)} H(t) \right) \leq C_\rho e^{\Lambda(t)} \lambda(t)^{1+2\rho} \|\omega^{-\theta} g(t) \|_{L^2_p}^2.$$ Integrating we get
$$H(t) \leq e^{-\Lambda(t)} H(0) + C_\rho \int_0^t e^{\Lambda(s)-\Lambda(t)} \lambda(s)^{1+2\rho}\|\omega^{-\theta} g(s) \|_{L^2_p}^2 ds. $$

We choose $\lambda(t) = \frac{\lambda_0}{t+t_0}$ so that $t_0 = \overline{C} \sigma^{-1}$, and $\lambda_0 \geq 4$. We choose $\overline{C}$ sufficiently large so that $\overline{C} \geq \text{max} \left\{ 10, \tfrac{\lambda_0}{c_0},
\tfrac{C_0\lambda_0}{r_1^{5/3}} \right\}$.  
Then $t_0 \geq 10$ and for every $t\geq 0$,  $\lambda(t)\leq c_0\sigma$ and $ r(t):=(C_0\lambda(t))^{3/5}\leq r_1$ (as $ 0<\sigma \leq 1 $). Thus we may apply the above inequalities with this choice of $\lambda(t)$ and $r(t)$.

Then $\Lambda(t) = \lambda_0 \log ((t+t_0)/t_0)$ and so we compute $e^{\Lambda(s)-\Lambda(t)} = \left( \frac{s+t_0}{t+t_0} \right)^{\lambda_0}$. 
Inserting all these in our main inequality gives 
\begin{align*} 
H(t) &\lesssim \left( \frac{t_0}{t+t_0} \right)^{\lambda_0} H(0) + C_\rho \left( \sup_{0\leq s \leq t} \|\omega^{-\theta} g(s) \|_{L^2_p}^2 \right)(t+t_0)^{-\lambda_0} \int_0^t (s+t_0)^{\lambda_0-1-2\rho} ds  \\
&\lesssim 
\left( \frac{t_0}{t+t_0} \right)^{\lambda_0} H(0) 
+ 
C_\rho \left( \sup_{0\leq s \leq t} \|\omega^{-\theta} g(s) \|_{L^2_p}^2\right)(t+t_0)^{-2\rho}.
\end{align*}

Thus taking the square root, using that $t_0\geq 10$ and the equivalence between $H(t)$ and $\|g(t)\|_{L^2_p}^2$ we get 
\begin{equation} \label{eq:abstract_L^2 estimate}
\begin{split}
\|g(t)\|_{L^2_p} &\lesssim \left( \frac{t_0}{t+t_0} \right)^{\lambda_0/2} \|g_0\|_{L^2_p}  + C_\rho \langle t \rangle^{-\rho} \sup_{0\leq s \leq t} \|\omega^{-\theta} g(s) \|_{L^2_p}\\
& \leq C_\rho (1+ \sigma t)^{-\lambda_0/2}\|g_0\|_{L^2_p} + C_\rho \langle t \rangle^{-\rho} \sup_{0\leq s \leq t} \|\omega^{-\theta} g(s) \|_{L^2_p}.  
\end{split}
\end{equation}
Here we also used that $t_0\sim\sigma^{-1}$. 
We now want to obtain a uniform in time estimate: Fix $T\geq 0$ and let $t\geq T$. 
Duhamel's formula gives
$$
g(t)= e^{-(a(p) + i \xi v(p))t}g_0 + \int_0^t e^{-(a(p) + i \xi v(p))(t-s) } Kg(s) ds, \quad g(s) = e^{sB(\xi)}g_0.
$$
and since $|e^{-t(a+i\xi v)}| \leq 1$,
together with \eqref{eq:weighted_gain}, we get 
\begin{equation}
 \|\omega^{-\theta} g(t) \|_{L^2_p} \leq 
 \|\omega^{-\theta} g_0 \|_{L^2_p}
 + C_\rho\int_0^t
 \langle t-s \rangle^{-3/5} \|g(s)\|_{L^2_p} ds.
\label{eq:abstract_weighted_duhamel}
\end{equation}
Inserting 
\eqref{eq:abstract_L^2 estimate} into
\eqref{eq:abstract_weighted_duhamel}, for $t\leq T$, gives
\begin{equation}\label{eq:abstract_weighted_close}
\begin{split}
\|\omega^{-\theta} g(t) \|_{L^2_p} & \leq 
    \|\omega^{-\theta} g_0 \|_{L^2_p}
    +C_\rho\|g_0\|_{L^2_p}
 \int_0^t
  \langle t-s\rangle^{-3/5}(1+\sigma s)^{-\lambda_0/2} ds \\  & \hspace{4cm}+
    C_\rho
       \sup_{0 \leq s\leq T}\|\omega^{-\theta} g(s) \|_{L^2_p}
       \int_0^t
       \langle t-s\rangle^{-3/5}
       \langle s \rangle^{-\rho} ds.
    \end{split}
\end{equation}
Now the integral in the second term is bounded by
$\langle t \rangle^{2/5-\rho},
$ (since $\rho < 1$), 
while the integral in the first term is
$$ 
\int_0^t 
  \langle t-s\rangle^{-3/5}(1+\sigma s)^{-\lambda_0/2} ds \lesssim \sigma^{-2/5},
$$
since, separating into the cases if 
$t \leq \sigma^{-1}$ and  $t\geq \sigma^{-1}$, in the first case discard the second
factor and use
$$  \int_0^t\langle t-s\rangle^{-3/5} ds
    \lesssim \langle t\rangle^{2/5}
    \lesssim \sigma^{-2/5},
$$
and in the second case, split the integral at $t/2$: For $0\leq s \leq t/2$ we have 
$$  \int_0^{t/2}
 \langle t-s\rangle^{-3/5}(1+\sigma s)^{-\lambda_0/2}ds
\lesssim
       t^{-3/5}\int_0^\infty(1+\sigma s)^{-\lambda_0/2} ds
    =t^{-3/5}\sigma^{-1}
    \leq \sigma^{-2/5}.
$$ 
since $\lambda_0\geq 4$, and for $t/2\leq s\leq t$ we have 
$$ \int_{t/2}^t
       \langle t-s\rangle^{-3/5}(1+\sigma s)^{-\lambda_0/2} ds
    \lesssim
   (\sigma t)^{-2}t^{2/5}=\sigma^{-2/5}(\sigma t)^{-8/5}
    \leq \sigma^{-2/5}.
$$
Altogether, inserting these in \eqref{eq:abstract_weighted_close} gives 
\begin{align}\label{eq:abstract_weighted_close2}
\|\omega^{-\theta} g(t) \|_{L^2_p} & \leq  \|\omega^{-\theta} g_0 \|_{L^2_p}
    +C_\rho \sigma^{-2/5} \|g_0\|_{L^2_p} +
    C_\rho
       \sup_{0 \leq s\leq T}\|\omega^{-\theta} g(s) \|_{L^2_p}\ 
       \langle t \rangle^{2/5-\rho}.
\end{align}
Since we choose $\rho>1/2>2/5$, we may take $t$ large enough, say $t>T_0$, so that $C_\rho\langle t \rangle^{2/5-\rho} <\frac{1}{2}$. 
We take the supremum over time in both sides and rearrange. Of course for this to make sense, we need to make sure that $\sup_{0 \leq s\leq T}\|\omega^{-\theta} g(s) \|_{L^2_p}<\infty$, which we prove below. But given this for now, we have: 
\begin{align}\label{eq:abstract_weighted_close3}
\sup_{T_0 \leq t\leq T} \| \omega^{-\theta} g(t) \|_{L^2_p} & \leq  C_\rho \Big[ \|\omega^{-\theta} g_0 \|_{L^2_p}
    + \sigma^{-2/5} \|g_0\|_{L^2_p} \Big].
\end{align}
Take $T \uparrow \infty$ to conclude for all large times. Regarding the times up to $T_0$: take the supremum over all times up to $T_0$ in  \eqref{eq:abstract_weighted_duhamel} and use that $\|g(t)\|_{L^2_p}\leq \|g_0\|_{L^2_p}$, which we get by the basic $L^2$ energy estimate. Then: 
$$
\sup_{0 \leq t\leq T_0} \|\omega^{-\theta} g(t) \|_{L^2_p} \lesssim  
 \|\omega^{-\theta} g_0 \|_{L^2_p}
 + 
 \langle T_0 \rangle^{2/5} \|g_0 \|_{L^2_p} \lesssim \|\omega^{-\theta} g_0 \|_{L^2_p}
 + 
 \sigma^{-2/5} \|g_0 \|_{L^2_p}.
$$
since $0<\sigma \leq 1$. 
The right-hand side is finite, implying the finiteness of $\sup_{0 \leq t\leq T} \|\omega^{-\theta} g(t) \|_{L^2_p}$. This proves the first claim \eqref{eq:propagated_weight}. For the last one: taking again the sup over $t$ in \eqref{eq:abstract_L^2 estimate} and insert \eqref{eq:propagated_weight}, we immediately get 
$$
\sup_{t\geq 0}\ \langle t \rangle^\rho \|g (t)\|_{L^2_p} \lesssim 
\sup_{t\geq 0} (1+ \sigma t)^{-\lambda_0/2} \langle t \rangle^\rho \|g_0\|_{L^2_p} + 
C_\rho \Big[ \|\omega^{-\theta} g_0 \|_{L^2_p}
 + 
 \sigma^{-2/5} \|g_0 \|_{L^2_p}\Big].
$$
This implies the claim \eqref{eq:energy_estimate}
 after noting that $\sigma^{-2/5}<\sigma^{-\rho}$ and that $(1+ \sigma t)^{-\lambda_0/2} \langle t \rangle^\rho \lesssim \sigma^{-\rho}$. This last thing we see it if we write for example $(t+10)^\rho(1+ \sigma t)^{-\lambda_0/2} \leq \left( \tfrac{10}{\sigma}\right)^{\rho} (1+ \sigma t)^{\rho-2} \lesssim \sigma^{-\rho}$, since $\lambda_0>4$ and $\rho<2$. 
\end{proof}

\begin{remark}
We stress here that, as might be expected, the desired $r$ in \eqref{eq:abstract_bound}, will be the radius separating the edge area $\mathcal{E}_r := \{\omega<r\}$ from the bulk $\mathcal{B}_r := \{\omega\geq r\}$.  
From the proof of Lemma \ref{lem:abstract_energy}  we see that $r$ may depend on time, as $r=r(t)\sim (t+t_0)^{-3/5}$, implying eventually the algebraic decay rate. 

In order to explain the form of \eqref{eq:abstract_bound}:   
The term $ r^{-5/3} D$ will control the microscopic energy on the bulk $\mathcal{B}_r$, exploiting the lower bound $a\gtrsim r^{5/3}$, while the term $\sigma^{-1}D$ will account for the (macroscopic) kernel component. 
On the edge region $\mathcal{E}_r$, where the collision frequency degenerates, we instead use the estimate 
$$ \|\mathbf{1}_{\mathcal{E}_r}g\|_{L^2_p}^2  \leq r^{2\theta}\|\omega^{-\theta}g\|_{L^2_p}^2. 
$$ 
Thus the remainder in \eqref{eq:abstract_bound} measures the edge-effect, where the energy is not directly controlled by dissipation. 
\end{remark}

Now we look at the two cases of small versus larger spatial frequencies, in order to verify the assumptions required in the abstract Lemma \ref{lem:abstract_energy}: 
\\

\noindent
\underline{\textbf{Small spatial frequencies:}} 
Assume that $\xi_*\leq 1$ and let $0<|\xi|<\xi_*$ 
and initial data satisfy $ \Pi f=0$ and 
 $ \| \omega^{-\theta} f\|_{L^2_p} +\|f\|_{\mathscr{Y}} < \infty.$ 
We first estimate the part of the solution remaining after
the two eigenmodes have been removed. 

Because the Riesz projections commute with $B(\xi)$,
the subspace $\operatorname{Ran}\mathbb{R}_\xi$ is $e^{tB(\xi)}$-invariant. 
Thus $g(t) := e^{tB(\xi)}\mathbb{R}_\xi f \in\operatorname{Ran}\mathbb{R}_\xi$ for every $t\ge0$.
Decomposing as 
$$ g=g^\parallel + g^{\perp_a}, \qquad
    g^\parallel = \Pi_a g,
$$ 
item (iii) of Lemma \ref{lem:estimates_linear} yields 
\begin{equation} \label{eq:small_frequency_moment_bound}
    \|g^\parallel(t)\|_{L^2_p}^2 \leq C |\xi|^{-2/3}\|g^{\perp_a}(t)\|_{L^2_a}^2.
\end{equation} 
We now split the momentum domain into edges $\mathcal{E}_r$ and bulk $\mathcal{B}_r$ for $0<r \leq 1/2$.  
The radius $r$ remains free, as required in
Lemma \ref{lem:abstract_energy}.

\noindent
\underline{On the bulk $\mathcal B_r$} we have  $a(p) \geq c_a\omega(p)^{5/3} \geq c_a r^{5/3}$. So,
\begin{align*}
\|\mathbf{1}_{\mathcal B_r}g\|_{L^2_p}^2
 \leq  2\|\mathbf{1}_{\mathcal B_r}g^{\perp_a}\|_{L^2_p}^2 + 2\|\mathbf{1}_{\mathcal{B}_r}g^\parallel\|_{L^2_p}^2 &\leq C r^{-5/3}\|g^{\perp_a}\|_{L^2_a}^2
 +2\|g^\parallel\|_{L^2_p}^2\\
 &\leq
 C (r^{-5/3}+ |\xi|^{-2/3} )
  \|g^{\perp_a}\|_{L^2_a}^2,
\end{align*}
where in the last inequality we used 
\eqref{eq:small_frequency_moment_bound}.

\noindent 
\underline{On the edges $\mathcal{E}_r$}, we instead estimate the full function $g$:
$$ \|\mathbf{1}_{\mathcal{E}_r} g\|_{L^2_p}^2
    = \int_{\mathcal{E}_r}
        \omega^{2\theta} |\omega^{-\theta}g|^2 dp \leq r^{2\theta} \| \omega^{-\theta} g\|_{L^2_p}^2.
$$

Adding now the estimates on the two disjoint sets of the momentum domain, gives
\begin{equation}
    \|g\|_{L^2_p}^2
    \leq
    C (r^{-5/3}+ |\xi|^{-2/3} )
       \|g^{\perp_a}\|_{L^2_a}^2
    +r^{2\theta} \| \omega^{-\theta} g\|_{L^2_p}^2.
    \label{eq:small_frequency_split}
\end{equation}
Now we are going to apply the abstract energy Lemma \ref{lem:abstract_energy} for $H$ being exactly the squared $L^2$ norm:  
Since $v$ is real a standard energy estimate gives 
$$\frac{d}{dt}\|g(t)\|_{L^2_p}^2  =
    2\operatorname{Re}\langle Lg-i\xi vg,g\rangle
    = 2\langle Lg,g\rangle = 2 \langle Lg^{\perp_a},g^{\perp_a}\rangle.
$$
where in the last equality we used that  $Lg^\parallel = 0$ and $L$ is self-adjoint.  
Weighted coercivity then yields that for some constant $c>0$,
\begin{equation} \label{eq:small_frequency_dissipation}
\frac{d}{dt}\|g(t)\|_{L^2_p}^2
 \leq -c \|g^{\perp_a}(t)\|_{L^2_a}^2.
\end{equation} 

\noindent
\underline{Application of the Lemma \ref{lem:abstract_energy}.} Equations \eqref{eq:small_frequency_split} and \eqref{eq:small_frequency_dissipation} now verify the hypotheses
of Lemma \ref{lem:abstract_energy} indeed with
$ H(t)=\|g(t)\|_{L^2_p}^2$, its energy production 
  $ D(t)=\|g^{\perp_a}(t)\|_{L^2_a}^2$ and 
$0< \sigma = | \xi |^{2/3}\leq 1$. This yields 
\begin{equation} \label{eq:small_frequency_remainder_initial}
\sup_{t \geq 0}\ \langle t\rangle^\rho
\|e^{tB(\xi)} \mathbb{R}_\xi f \|_{L^2_p}
\lesssim_\rho
\| \omega^{-\theta} \mathbb{R}_\xi f\|_{L^2_p} 
 +|\xi|^{-2\rho/3}\|\mathbb{R}_\xi f \|_{L^2_p}.
\end{equation}
 
We now express the right-hand side in terms of the original datum $f$. Since $\Pi f=0$, item (ii) of Lemma \ref{lem:estimates_linear} gives
$$  \| \omega^{-\theta} \mathbb{R}_\xi f \|_{L^2_p} 
 \lesssim \| \omega^{-\theta} f\|_{L^2_p}
 +\|f\|_{\mathscr{Y}}.
$$
Also again from item (ii) of Lemma \ref{lem:estimates_linear}, the uniform $L^2_p$ bounds for the two Riesz projections
imply
$$ \|\mathbb{R}_\xi f\|_{L^2_p}
    \leq
    \big(1+\|\mathbb{P}_{\rm fr}\|_{2\to 2 }
    +\|\mathbb{P}_{\rm df}\|_{2\to 2}\big)\|f\|_{L^2_p}
    \lesssim\|f\|_{L^2_p} \leq \|a\|_{L^\infty_p}^{1/2} \|f\|_{\mathscr{Y} }  \lesssim \|f\|_{\mathscr{Y}}.
$$
since $a$ is bounded above.  
Inserting these into
\eqref{eq:small_frequency_remainder_initial}, and since 
$|\xi|^{-2\rho/3}\geq 1$, we have 
\begin{equation} \label{eq:small_frequency_remainder_decay} 
\sup_{t\geq 0}\ \langle t \rangle^\rho
    \|e^{tB(\xi)}\mathbb{R}_\xi f\|_{L^2_p}
    \lesssim_\rho
    \|\omega^{-\theta} f \|_{L^2_p}
    + |\xi|^{-2\rho/3}\|f\|_{\mathscr{Y}}.
\end{equation}

Now regarding the two eigenmodes: 
First the two eigenvalues are algebraically simple, and so 
$$
e^{tB(\xi)}\mathbb{P}_jf =e^{tz_j}\mathbb{P}_jf,
    \qquad j\in\{\mathrm{fr},\mathrm{df}\}.
$$
Thanks to their asymptotics we have $e^{tz_{\rm fr}}\leq e^{-ct |\xi|^{8/5}}$ and 
$ e^{tz_{\rm df}}\leq e^{-ct|\xi|^2}$, for some $c>0$. 
 
Now for any $\nu>0$, $\rho>0$ and $|\xi|$ small enough, we have the bound 
   $$ \sup_{t\geq 0}\ \langle t\rangle^\rho e^{-ct|\xi|^\nu }
    \lesssim_{\rho,c} |\xi|^{-\nu \rho}, 
$$
since for $s = t|\xi|^\nu$, it holds $\langle t\rangle \leq |\xi|^{-\nu} \langle s\rangle$, and the left-hand side is bounded by $\sup_s \langle s\rangle^\rho |\xi|^{-\nu \rho} e^{-cs} \lesssim |\xi|^{-\nu \rho}$.  

Using this with the projection estimates in item (ii) of Lemma  \ref{lem:estimates_linear} gives
\begin{align}
    \sup_{t\geq 0}\ \langle t\rangle^\rho
    \|e^{t z_{\rm fr}}\mathbb{P}_{\rm fr}f\|_{L^2_p}
    \lesssim_\rho
    |\xi|^{4/5 - 8\rho/5}\| f \|_{\mathscr{Y}}\quad \text{ and }\quad 
    \sup_{t\geq 0}\ \langle t\rangle^\rho
    \|e^{t z_{\rm df}}\mathbb{P}_{\rm df}f \|_{L^2_p}
    &\lesssim_\rho
    |\xi|^{1-2\rho}\|f\|_{\mathscr{Y}}.
\end{align}

In both cases we have 
$ |\xi|^{4/5-8\rho/5}\leq |\xi|^{-2\rho/3}$ and $|\xi|^{1-2\rho}\leq |\xi|^{-2\rho/3}$ (same as the remainder decay).  
Finally, altogether with the exact decomposition 
$$  e^{t B(\xi)}f  = e^{t z_{\rm fr}}\mathbb{P}_{\rm fr}f  +
    e^{t z_{\rm df}}\mathbb{P}_{\rm df}f
    + e^{t B(\xi)}\mathbb{R}_\xi f
$$
gives for all $0<|\xi|<\xi_*$: 
\begin{equation} \label{eq:small_frequency_L2_decay}
    \sup_{t\geq 0}\ \langle t \rangle^\rho
    \|e^{t B(\xi)} f\|_{L^2_p}
    \lesssim_\rho \|\omega^{-\theta} f\|_{L^2_p} + |\xi|^{-2\rho/3} \| f \|_{\mathscr{Y} }. 
\end{equation}

Notice that in particular the condition $\Pi f=0$ is imposed only on the initial data. We do not assume that it is preserved by $e^{ t B(\xi)}$. The invariant condition used in the energy argument is $g(t)\in\operatorname{Ran}\mathbb{R}_\xi$, which is preserved by the flow.

\noindent
\underline{\textbf{Spatial frequencies bounded away from zero:}}
We now estimate $g(t)=e^{tB(\xi)}f$ for $|\xi|\ge\xi_*$. In this region, we only assume that  
$ \|\omega^{-\theta} f \|_{L^2_p} <\infty$.  
Recall the collision estimates
\begin{equation} \label{eq:large_frequency_collision_bounds}
    -\langle Lh,h\rangle
    \geq c \|h^{\perp_a}\|_{L^2_a}^2,
    \qquad
    |\langle Lh,z\rangle|
    \leq C\|h\|_{L^2_a}\|z\|_{L^2_a}.
\end{equation}

The ordinary energy identity controls $g^{\perp_a}$. At small frequencies, in order to control the kernel component $g^\parallel$ for $g \in \operatorname{Ran} \mathbb{R}_\xi$, 
we used the estimate \eqref{eq:small_frequency_moment_bound}. This was proved when $|\xi|<\xi_*$, and so it is not available in this regime. In order to control $g^\parallel$ as well, we add a bounded quadratic
correction in the energy that uses transport.  This is the usual idea of modifying the energy in hypocoercivity theory, see for example  \cite{DMS15, villani09hypocoercivity}.   

\smallskip
\noindent
\underline{Construction of the modified energy.}
Write $\phi_1 = \mathfrak{f}$, $
    \phi_2  = \omega\mathfrak{f}$ so that 
  $  \ker L= \operatorname{span}\{\phi_1,\phi_2\}$.  Choose a fixed smooth function $\chi\geq 0$, supported away
from the momentum-edges and invariant under reflection $p \mapsto 2\pi-p$.
We choose it positive on an interval where $v\neq 0$ and $\omega$ is nonconstant, and on the reflected interval.

Define
$$ T_\xi:= i\xi v,  \qquad  \kappa_\xi:= \frac{\xi^2}{1+\xi^2},  \qquad \varphi_\xi:= \frac{\chi}{1+\xi^2}, \qquad  A_\xi:= \Pi_a^* T_\xi^* \varphi_\xi.
$$
Here $\Pi_a^*$ and $T_\xi^*$ are adjoints in flat $L^2_p$ and in particular, $T_\xi^* = -T_\xi$.
The weighted projection $\Pi_a$ is bounded on flat $L^2_p$,
because it is a finite-rank operator whose defining
functionals are pairings with the functions $a\phi_j$. So, $\|A_\xi\|_{2\to2} \leq C\frac{|\xi|}{1+\xi^2} \leq C$. 

For a sufficiently small $\varepsilon_0>0$, we define for each $\xi$ in that region
\begin{equation} \label{eq:large_frequency_energy}
 H_\xi(g) := \|g\|_{L^2_p}^2+
    \varepsilon_0 \operatorname{Re} \langle A_\xi g, g \rangle.
\end{equation}

Since
$ | \langle A_\xi g, g\rangle | \leq C\| g\|_{L^2_p}^2$
we may choose $\varepsilon_0$ so that
\begin{equation} \label{eq:large_frequency_energy_equivalence}
    \frac{1}{2}\|g\|_{L^2_p}^2
    \le H_\xi(g)
    \le\frac{3}{2}\|g\|_{L^2_p}^2\quad \text{for all }\ \xi. 
\end{equation}

For the following calculation, for notational simplicity we neglect the index $\xi$ on $A,T,\varphi,\kappa$. 
Now the functions $a,\chi,\phi_1,\phi_2$ are invariant under
reflection, whereas $v$ is odd and thus $T g^\parallel=i\xi v g^\parallel $ is odd. So 
$ A g^\parallel =0$, 
and also $ \Pi_a T g^\parallel = 0 $. 
Also, since $\Pi_a g^{\perp_a}=0$, for every $z \in L^2_p$,
\begin{equation} \label{eq:large_frequency_projection_pairing}
    \langle Az, g^{\perp_a} \rangle  =
    \langle T^*\varphi z,\Pi_a g^{\perp_a} \rangle =0, 
\end{equation}
and also similarly,
$ \langle A g^{\perp_a},T g^\parallel \rangle
 = \langle T^*\varphi g^{\perp_a},\Pi_a T g^\parallel \rangle = 0.$ 
Finally, because $\Pi_a g^\parallel =g^\parallel$, it holds 
\begin{equation} \label{eq:large_frequency_pairing_identity}
\langle Az, g^\parallel \rangle =
 \langle T^*\varphi z, g^\parallel \rangle =
\langle z,\varphi T g^\parallel \rangle.
\end{equation}

\noindent
\underline{Modified-energy estimate.} We now calculate the dissipation of the modified energy-part, by using the above identities on the projections that contribute zero. Since $L g^\parallel = 0$, the evolution equation is
$ g'= L g^{\perp_a} - T g^\parallel - T g^{\perp_a}$ and so by time-differentiating, we obtain:
\begin{align} \label{eq:large_frequency_correction_derivative}
\frac{d}{dt} \Big( \operatorname{Re}\langle Ag(t),g(t)\rangle\Big) = & -\operatorname{Re}\langle AT g^\parallel, g^\parallel \rangle -\operatorname{Re}\langle AT g^{\perp_a}, g^\parallel \rangle -\operatorname{Re}\langle A g^{\perp_a},T g^{\perp_a}\rangle \notag\\
 &+
 \operatorname{Re}\langle AL g^{\perp_a},g^\parallel \rangle
+\operatorname{Re}\langle A g^{\perp_a},L g^{\perp_a}\rangle.
\end{align}
 
The first term has the good sign. By
\eqref{eq:large_frequency_pairing_identity}: 
$$  I:=\langle ATg^\parallel, g^\parallel\rangle
    = \langle T g^\parallel,\varphi T g^\parallel \rangle
    =  \kappa\int_{\mathbb T}\chi v^2| g^\parallel|^2 dp, 
$$
and so $I$ is real and nonnegative, and
there is a constant $c = c_\chi>0$ such that
\begin{equation} \label{eq:bulk_kernel_bound} 
    \int_{\mathbb{T}}\chi v^2|g^\parallel|^2 dp
    \geq c_\chi \|g^\parallel\|_{L^2_p}^2.
\end{equation}
Indeed, if the integral vanishes, then
$g^\parallel=c_1\phi_1+c_2\phi_2$ should vanish on that interval.
Since $\mathfrak{f} > 0$, this means
$c_1+c_2\omega = 0$ there. But $\omega$ is nonconstant and so it has to be that $c_1=c_2=0$.  
Thus the integral has to be strictly positive whenever $ g^\parallel \in \ker L$ has $L^2_p$ norm one. To get the same lower bound $c_\chi$ for all $g^\parallel$, notice that the $L^2$-unit sphere of the two-dimensional space $\ker L$ is compact and the integral depends continuously on $g^\parallel$. Thus its minimum is attained on that sphere. This minimum cannot be zero, by the above argument. Apply this bound to 
$g^\parallel/\|g^\parallel\|_{L^2_p}$ to see the \eqref{eq:bulk_kernel_bound}.

We next estimate the other four terms. 

For the second term: 
\begin{align*}
 | \langle A T g^{\perp_a}, g^\parallel\rangle|
= |\langle T g^{\perp_a},\varphi T g^\parallel\rangle| &\leq
    \|\varphi^{1/2}T g^{\perp_a}\|_{L^2_p}
    \|\varphi^{1/2}T g^\parallel\|_{L^2_p} \leq C \| g^{\perp_a}\|_{L^2_a} I^{1/2},
\end{align*}
where we used that because $\chi$ is supported where $a$ is lower bounded, 
$$ \| \varphi^{1/2} T g^{\perp_a} \|_{L^2_p}^2
= \kappa \int \chi v^2 | g^{\perp_a}|^2 dp
    \leq C \kappa \| g^{\perp_a}\|_{L^2_a}^2 \leq C \| g^{\perp_a}\|_{L^2_a}^2. 
$$ 

For the fourth term, involving $AL g^{\perp_a}$, we will exploit the following bound for the collision form: Since
$L=-a+K= a^{1/2} [\mathscr{K} - I] a^{1/2}$ and $[\mathscr{K} - I]$ is bounded on $L^2_p$, for every $h,z\in L^2_p$ we have
\begin{align}  \label{eq:collision_form_bound}
|\langle L h,z\rangle| =
 \left|\left\langle [\mathscr{K} - I]a^{1/2}h , a^{1/2}z \right\rangle  \right|
&\leq
\|\mathscr{K}-I\|_{2\to2}
\|a^{1/2}h\|_{L^2_p}\|a^{1/2}z\|_{L^2_p}
 \lesssim \|h\|_{L^2_a}  \|z\|_{L^2_a}.
\end{align}
Therefore for the fourth term we write (using also \eqref{eq:large_frequency_pairing_identity}): 
$$  |\langle AL g^{\perp_a}, g^\parallel \rangle| =  |\langle L g^{\perp_a},\varphi T g^\parallel \rangle|
    \leq C\| g^{\perp_a}\|_{L^2_a}
    \|\varphi T g^\parallel\|_{L^2_a}. 
$$
Now since $a\chi$ is bounded,
$ \|\varphi T g^\parallel\|_{L^2_a}^2 =
    \frac{\xi^2}{(1+\xi^2)^2}
    \int a\chi^2v^2|g^\parallel|^2 dp
    \leq C I$. Therefore the fourth and second term together give:  
\begin{equation} \label{eq:large_frequency_mixed_terms}
    |\langle A T g^{\perp_a}, g^\parallel \rangle| +|\langle AL g^{\perp_a}, g^\parallel \rangle|
    \leq C\| g^{\perp_a} \|_{L^2_a}I^{1/2}.
\end{equation}

To estimate the third and the fifth terms, note that
$ \operatorname{Ran} \Pi_a^*
    =\operatorname{span}\{a\phi_1,a\phi_2\}$. 
In particular, if
$G_{ij}:=\langle\phi_j,\phi_i\rangle_{L^2_a}$,
then
$$\Pi_a^*z = \sum_{k,\ell=1}^2 (G^{-1})_{k\ell}\langle z,\phi_\ell\rangle a\phi_k.
$$
Applying this to $z=T^*\varphi  g^{\perp_a}$ shows that
$$
 A g^{\perp_a}=\sum_{k=1}^2b_k a\phi_k,
    \qquad  |b_1|+|b_2|  \leq
    C\frac{|\xi|}{1+\xi^2}\|g^{\perp_a}\|_{L^2_a}, 
$$
where we used Cauchy-Schwarz, since $\varphi$ is supported on a fixed bulk. 
Now for each $k$, Cauchy-Schwarz and the bound \eqref{eq:collision_form_bound} again give respectively 
$$  |\langle a\phi_k , T g^{\perp_a} \rangle|
    \leq C|\xi|\| g^{\perp_a} \|_{L^2_a}, \qquad |\langle a\phi_k, L g^{\perp_a} \rangle| 
 \lesssim  \| g^{\perp_a} \|_{L^2_a}.
$$

Thus altogether the third and the fifth  terms contribute as 
\begin{equation} \label{eq:large_frequency_microscopic_terms}
 |\langle A g^{\perp_a}  ,T g^{\perp_a}  \rangle| + |\langle A g^{\perp_a} ,L g^{\perp_a}  \rangle| \lesssim \| g^{\perp_a}  \|_{L^2_a}^2,
\end{equation}
uniformly in $\xi$, because
$\xi^2/(1+\xi^2),\ |\xi|/(1+\xi^2) \leq 1$.

Inserting now 
\eqref{eq:large_frequency_mixed_terms} and
\eqref{eq:large_frequency_microscopic_terms}
into \eqref{eq:large_frequency_correction_derivative},
and applying Young's inequality, gives
$$ \frac{d}{dt} \Big( \operatorname{Re}\langle Ag(t),g(t)\rangle\Big) \leq -I + C\|g^{\perp_a}\|_{L^2_a} I^{1/2}
  + C\|g^{\perp_a} \|_{L^2_a}^2
    \leq -\frac{1}{2} I + C_1\| g^{\perp_a} \|_{L^2_a}^2.
$$
The ordinary energy identity yields for some $C_L>0$
$$
    \frac{d}{dt}\|g\|_{L^2_p}^2  =
    2\langle L g^{\perp_a},g^{\perp_a}\rangle
    \leq -2 C_L \|g^{\perp_a}\|_{L^2_a}^2.
$$
Thus the full modified-energy estimate is $$
\frac{d}{dt}H_\xi(g) \leq
 -(2C_L-\varepsilon_0 C_1)\|g^{\perp_a}\|_{L^2_a}^2
    -\frac{\varepsilon_0}{2}I.
$$
Choose $\varepsilon_0$ smaller if necessary so that
$\varepsilon_0 C_1\leq C_L$. Combining with the lower bound in \eqref{eq:bulk_kernel_bound} we obtain 
\begin{equation}  \label{eq:large_frequency_dissipation}
    \frac{d}{dt}H_\xi(g(t)) \leq
    -c\left(
        \|g^{\perp_a}(t)\|_{L^2_a}^2
        +\kappa_\xi\|g^\parallel(t)\|_{L^2_p}^2
    \right),
\end{equation}
where $c>0$ is independent of $\xi$ and $t$.

\noindent
\underline{Application of the Lemma \ref{lem:abstract_energy}.} 
Now we are ready to apply the abstract Lemma \ref{lem:abstract_energy}, this time for 
$$ D_\xi(g):=
\|g^{\perp_a}\|_{L^2_a}^2 + \kappa_\xi\|g^\parallel\|_{L^2_p}^2.$$

Indeed, define $\mathcal{E}_r$ and
$\mathcal{B}_r$ as we did before for the small frequencies.  
As before on the edges we have 
$  \|\mathbf{1}_{\mathcal{E}_r}g\|_{L^2_p}^2
    \leq r^{2\theta}  \|\omega^{-\theta} g \|_{L^2_p}^2$ 
    while on the bulk we have, 
\begin{align*}
\|\mathbf{1}_{\mathcal{B}_r}g\|_{L^2_p}^2
    \leq
2\|\mathbf{1}_{\mathcal{B}_r}g^{\perp_a}\|_{L^2_p}^2
    +2 \|g^\parallel\|_{L^2_p}^2 &\leq
    Cr^{-5/3}\|g^{\perp_a}\|_{L^2_a}^2
    +2\|g^\parallel\|_{L^2_p}^2\\
    &\leq
    C (r^{-5/3}+\kappa_\xi^{-1} )D_\xi(g).
\end{align*}

Together with the equivalency of the energies in 
\eqref{eq:large_frequency_energy_equivalence}, this gives
\begin{equation} \label{eq:large_frequency_split}
 H_\xi(g) \leq
    C ( r^{-5/3}+\kappa_\xi^{-1} )D_\xi(g)
 + Cr^{2\theta} \| \omega^{-\theta} g\|_{L^2_p}^2.
\end{equation}

For every $\xi\neq 0$, we have $0<\kappa_\xi \leq 1$ and so indeed we are good to go: 
 \eqref{eq:large_frequency_dissipation} and
\eqref{eq:large_frequency_split} verify the hypotheses of
Lemma \ref{lem:abstract_energy}, with
$ H(t)=H_\xi(g(t))$, $ D(t)=D_\xi(g(t))$, and $\sigma=\kappa_\xi$.  We therefore get that
\begin{equation}  \label{eq:modified_energy_decay_all_frequencies}
 \sup_{t\geq 0}\ \langle t\rangle^\rho
    \|e^{tB(\xi)}f\|_{L^2_p}
\lesssim_\rho \| \omega^{-\theta} f\|_{L^2_p} +\kappa_\xi^{-\rho}\|f\|_{L^2_p}.
\end{equation}

Finally, for $|\xi|\ge\xi_*$ we have 
$ \kappa_\xi \geq \frac{\xi_*^2}{1+\xi_*^2} =:\kappa_*>0$.  
Also $\|f\|_{L^2_p}\leq \| \omega^{-\theta} f \|_{L^2_p}$, 
because $0 < \omega \leq 1$ a.e., and so 
\begin{equation}  \label{eq:large_frequency_L2_decay}
    \sup_{t \geq 0}\ \langle t \rangle^\rho
    \|e^{tB(\xi)} f\|_{L^2_p}
    \lesssim_{\rho,\xi_*} 
    \| \omega^{-\theta} f\|_{L^2_p},
    \qquad |\xi|\geq \xi_*.
\end{equation}

\begin{remark}
Let us note here that we could in principle apply the modified energy $H$ for the whole spatial (nonzero) frequency range, not just for the large ones. 
But if we were to do that, in the very last step, the term $\kappa_\xi^{-\rho}$ would not be uniformly bounded for small $\xi$: in fact it would be 
$\kappa_\xi^{-\rho} \sim |\xi|^{-2\rho}$ near zero, which is too singular (not integrable) there when $\rho>1/2$. At small frequencies, the preceding argument using the two eigenmodes gives the better frequency dependence needed for Fourier integration, i.e. the weaker singularity $|\xi|^{-2\rho/3}$, which is locally integrable in both $L^1_\xi$ and $L^2_\xi$ for our choice of $\rho$.
Therefore, indeed we need to separate into the two cases of small and larger spatial frequencies and apply the modified energy argument only when $|\xi|\geq \xi_*$. 

The ordinary $L^2_p$ energy would not work for all spatial frequencies, since, as we  explained also when we applied Lemma \ref{lem:abstract_energy}, at larger spatial frequencies we lack control of the kernel component $g^\parallel$. The hypocoercive modified functional provides such control exploiting the transport through its construction.  
\end{remark}

Next, to pass from $L^2_p$ to a weighted supremum as stated in the main linear theorem, we first need to estimate the solution in $L^c_p$ on a fixed bulk, with $c>2$.

We start with a lemma that bounds $\omega^\mu|Kh|$ by a bulk-$L^c_p$ norm and a small multiple
of the global weighted supremum.
The estimate holds for a.e. $p$, including near
the edges. Inserting it into Duhamel's formula will
allow us later to pass from decay on the bulk to weighted
pointwise decay on the whole momentum domain.

\begin{lemma}\label{lem:kernel_splitting_linear}
Fix $2<c<\infty$ and $\mu\in\{1/6,1/2\}$.  For every $\eta>0$, there exists a single $0 < r_0 < 1$ such that,  
with $\mathcal{B}_{r_0} := \{\omega \geq r_0\}$, 
\begin{equation}  \label{eq:kernel_splitting_linear}
    \omega(p)^\mu |K h(p) |
    \leq a(p)\left( \eta\|\omega^\mu h\|_{L^\infty_p} + C_{\eta,c,r_0}\|\mathbf{1}_{\mathcal B}h\|_{L^c_p}  \right), 
\end{equation}
for both $\mu=1/6$ and $\mu=1/2$, every
$h\in L^2_p$ with $\omega^\mu h\in L^\infty_p$,
and almost every $p\in\mathbb{T}$.
\end{lemma}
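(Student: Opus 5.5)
We split the kernel of $K$ according to where the integration variable lies: near the edges or in a fixed bulk. Write $Kh(p)=\int k(p,p')h(p')\,dp'$, with the kernel bounds collected in the Appendix (Lemma \ref{lem:app_Kernelbound_edge} and the kernel estimates used for the smoothing effect of $K$). For $r_0\in(0,1)$ to be chosen, decompose $h=\mathbf{1}_{\mathcal{E}_{r_0}}h+\mathbf{1}_{\mathcal{B}_{r_0}}h$, so that $Kh=K(\mathbf{1}_{\mathcal{E}_{r_0}}h)+K(\mathbf{1}_{\mathcal{B}_{r_0}}h)$. The goal is to bound each piece by $a(p)\omega(p)^{-\mu}$ times the corresponding norm. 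The pointwise factor $a\sim\omega^{5/3}$ will come from the vanishing of the kernel when $p$ is at an edge, which is the same mechanism that gives $a\sim\omega^{5/3}$ in the first place.

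\emph{Edge piece.} We bound $|h(p')|\le \omega(p')^{-\mu}\|\omega^\mu h\|_{L^\infty_p}$, which gives
\[
\omega(p)^\mu|K(\mathbf{1}_{\mathcal{E}_{r_0}}h)(p)|\le \|\omega^\mu h\|_{L^\infty_p}\,\omega(p)^\mu\int_{\mathcal{E}_{r_0}}|k(p,p')|\,\omega(p')^{-\mu}\,dp'.
\]
The key claim is the weighted kernel bound
\[
\omega(p)^{\mu}\int_{\mathbb{T}}|k(p,p')|\,\omega(p')^{-\mu}\,dp'\lesssim a(p)\quad\text{for }\mu\in\{1/6,1/2\}.
\]
This should follow from the explicit parametrisation \eqref{eq:parametrisation_nontrivial_branch} and the factor $\prod\omega_\ell$ in $\sigma_{02}$, in the same way as the asymptotics $a\sim\omega^{5/3}$ in the Appendix: the loss $\omega(p')^{-\mu}$ with $\mu\le 1/2$ is absorbed by the $\omega$ factors attached to $p'$. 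Granting this, the integrand is integrable uniformly in $p$ after division by $a(p)\omega(p)^{-\mu}$. By dominated convergence (or absolute continuity plus a uniform integrability argument) the restriction to $\mathcal{E}_{r_0}$ therefore gives $\le \eta\,a(p)\omega(p)^{-\mu}$ once $r_0$ is small. We choose a single $r_0$ working for both values of $\mu$, which is possible since there are finitely many $\mu$.

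\emph{Bulk piece.} By Hölder with $1/c+1/c'=1$,
\[
\omega(p)^\mu|K(\mathbf{1}_{\mathcal{B}_{r_0}}h)(p)|\le \omega(p)^\mu\|k(p,\cdot)\mathbf{1}_{\mathcal{B}_{r_0}}\|_{L^{c'}}\,\|\mathbf{1}_{\mathcal{B}_{r_0}}h\|_{L^c_p}.
\]
It remains to show $\omega(p)^\mu\|k(p,\cdot)\mathbf{1}_{\mathcal{B}_{r_0}}\|_{L^{c'}}\le C_{r_0,c}\,a(p)$. Since $c'<2$ and $k(p,\cdot)$ has at most integrable (logarithmic or mild power) singularities on the diagonal set where the resonance parametrisation degenerates, $k(p,\cdot)\in L^{c'}$ away from the edges. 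As $p\to$ edge with $p'$ in the bulk, the factors $\omega_0\cdots\omega_3$ should make this norm at most $C_{r_0}\omega(p)^{5/3-\mu}$ or better; this is consistent with the smoothing bound for $\omega^{-3/2+\varepsilon}K$ in $L^R$. For $p$ in the bulk we simply use $a(p)\gtrsim 1$ there. The bulk piece has no smallness requirement, so any constant $C_{\eta,c,r_0}$ is allowed.

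The main obstacle is the edge-vanishing of the kernel with the precise rate $\omega(p)^{5/3}$ when $p$ approaches an edge and $p'$ is in the bulk or near the opposite edge, uniformly in the weight $\omega(p')^{-\mu}$. I would first try to read both pointwise bounds directly off the Appendix kernel estimates, reusing the change of variables $p_2\mapsto(p_1,p_3)$ from the proof of the asymptotics of $a$. Failing that, I would weaken the bound to $\omega(p)^{5/3-\varepsilon}$ if needed, and absorb the loss using $\mu\le1/2<5/3$.
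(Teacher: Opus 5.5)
Your edge/bulk decomposition of the input and your H\"older treatment of the bulk piece (with $c'<2$) are the same as in the paper. The edge piece, however, has a genuine gap, and that is where the content of the lemma lies. You assert the weighted bound $\omega(p)^{\mu}\int_{\mathbb{T}}|k(p,q)|\,\omega(q)^{-\mu}\,dq\lesssim a(p)$ without proving it; it is true. But it does not imply $\sup_p a(p)^{-1}\omega(p)^{\mu}\int_{\mathcal{E}_{r_0}}|k(p,q)|\,\omega(q)^{-\mu}\,dq\to 0$ as $r_0\to 0$. Dominated convergence only works for each fixed $p$, with an $r_0$ that depends on $p$. A single $r_0$ requires uniform integrability of the normalised densities $G_p(q)=a(p)^{-1}\omega(p)^{\mu}|k(p,q)|\,\omega(q)^{-\mu}$, which is strictly stronger than a uniform $L^1$ bound. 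It is not automatic here, because as $p$ approaches an edge the singular set of the kernel moves into the edge region. For $p\to 0$, the square-root singularities of $K_1$ sit at $q=r_\pm(p)$ with $2\pi-r_-(p)\sim p^{1/3}$ and $2\pi-r_+(p)\sim p^3$. The $K_2$ kernel peaks in the corner region $\omega(q)\sim\omega(p)$. So for every fixed $r_0$ there are $p$ whose kernel singularities lie inside $\mathcal{E}_{r_0}$.

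What is missing is a quantitative bound of the form $\lesssim r_0^{\alpha}a(p)$, with $\alpha>0$, uniformly in $p$. This needs the explicit kernel structure. For $K_1$ one uses $|K_1h(p)|\lesssim\omega(p)^{3/2}\big[\int_0^{r_-(p)}|h(q)|(r_-(p)-q)^{-1/2}dq+\int_{r_+(p)}^{2\pi}|h(q)|(q-r_+(p))^{-1/2}dq\big]$. For $K_2$ one uses the corner bounds $\epsilon^{-1/2}s^{3/2}$ for $s\le\epsilon$ and $\epsilon^{3/2}s^{-1/2}$ for $s\ge\epsilon$, where $\epsilon=\omega(p)$ and $s=\omega(q)$.

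For $\mu=1/6$ the estimate is easy: $\omega(p)^{1/6+3/2}=\omega(p)^{5/3}$, and $\int_0^{\min\{r_0,y\}}s^{-1/6}(y-s)^{-1/2}\,ds\lesssim r_0^{1/3}$ uniformly in $y$.

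For $\mu=1/2$ the weight alone gives no uniform smallness. Indeed $\int_z^{r_0}s^{-1/2}(s-z)^{-1/2}\,ds\sim\log(r_0/z)$ diverges as the singularity location $z$ approaches the edge. The paper closes this by spending the surplus factor $\omega(p)^{1/2+3/2}/a(p)\sim\omega(p)^{1/3}$ together with the root asymptotics $\min\{r_\pm(p),2\pi-r_\pm(p)\}\gtrsim\omega(p)^3$. These give $\omega(p)^{1/3}\big(1+\log(r_0/r_\pm)\big)\lesssim r_\pm^{1/9}\big(1+\log(r_0/r_\pm)\big)\lesssim r_0^{1/9}$. Analogously, $\epsilon^{1/3}\log(r_0/\epsilon)\lesssim r_0^{1/3}$ for the $K_2$ corner. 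Without this link between the location of the singularity and $\omega(p)$, the case $\mu=1/2$ does not close.

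Your fallback is also off. The constraint that matters on the $p$-side is $\mu\ge 1/6$, coming from $\omega(p)^{\mu+3/2}\lesssim a(p)$, not $\mu\le 1/2$. A loss of $\omega(p)^{-\varepsilon}$ cannot be absorbed, since the right-hand side is exactly $a\sim\omega^{5/3}$.
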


\begin{proof}
Write for the edge $\mathcal{E}_{r_0}:=\{\omega<r_0\}$. 
 Recall that $K=-K_1+2K_2$ and $a\sim\omega^{5/3}$.  
 For the two roots $r_-(p)<r_+(p)$ of $F_-(p,\cdot)$, the
Appendix \ref{app:Kernel_smoothing_edges}, \eqref{eq:appendix_K1_bound}, gives
\begin{equation}  \label{eq:kernel_splitting_K1_Abel}
    |K_1 h(p)|
    \lesssim \omega(p)^{3/2}
    \bigg[
        \int_0^{r_-(p)}
        \frac{|h(q)|}{\sqrt{r_-(p)-q}} dq
        +
        \int_{r_+(p)}^{2\pi}
        \frac{|h(q)|}{\sqrt{q-r_+(p)}} dq
    \bigg].
\end{equation}
The same appendix also gives the asymptotic, as $p \to 0$, for the roots: 
$$
 2\pi - r_-(p)\sim p^{1/3},\quad
 2\pi - r_+(p)\sim p^3, 
$$
and their reflected versions at $2\pi$. 
We first apply this to $\mathbf{1}_{\mathcal{E}_{r_0}}h$: Since $|h(q)|\le\omega(q)^{-\mu} \| \omega^\mu h \|_{L^\infty_p}$, for $\mu=1/6$, the elementary integrals
$$
 \int_0^{\min\{r_0,y\}}
       \frac{s^{-1/6}}{\sqrt{y-s}} ds\lesssim r_0^{1/3},
 \qquad
 \int_z^{r_0} \frac{s^{-1/6}}{\sqrt{s-z}} ds\lesssim r_0^{1/3},\ \  \text{ for } 0 <z < r_0
$$
give from \eqref{eq:kernel_splitting_K1_Abel}, 
$$
 \omega^{1/6}| K_1(\mathbf{1}_{\mathcal{E}_{r_0}}h) | \lesssim r_0^{1/3}a \| \omega^{1/6} h \|_{L^\infty_p}.
$$
For $\mu = 1/2$, the corresponding integrals satisfy for $ 0 < z < r_0$, 
\begin{align*}
 \int_0^{\min\{r_0,y\}} \frac{s^{-1/2}}{\sqrt{y-s}} ds  \lesssim \min\left\{1,\sqrt{\frac{r_0}{y}} \right\},\qquad
 \int_z^r\frac{s^{-1/2}}{\sqrt{s-z}} ds  \lesssim 1+ \log(r_0/z). 
\end{align*} 
So we multiply by $\omega^{1/2}$ and divide by $a$ (to recreate the right-hand side as in the $\mu=1/6$ case) and we end up with the factor 
$\frac{\omega(p)^{1/2}\omega(p)^{3/2}}{a(p)}
    \lesssim\omega(p)^{1/3}$.   
The $r_{\pm}$-estimates from the Appendix imply
$$\min\{r_\pm(p),2\pi-r_\pm(p)\}  \gtrsim \omega(p)^3. $$
So, using this in the second line below and treating the contributions near $0$\footnote{The contributions near $2\pi$ can be treated identically after the change of variables $q \mapsto 2\pi-q$, with $r_-$ and $r_+$ only replaced by $2\pi-r_+$ and $2\pi-r_-$, respectively.} 
\begin{align*} 
\omega^{1/2} &|K_1 (\mathbf{1}_{\mathcal{E}_{r_0} \cap [0,\pi)}h)|  
\lesssim \omega^{1/3}a(p) \|\omega^{1/2}h\|_{L^\infty_p} 
\left[ \min\left( 1,\sqrt{\frac{r_0}{r_{-}(p)}} \right) + \mathbf{1}_{\{r_+(p)<r_0\}} \left(1+ \log\left(\frac{r_0}{r_{+}(p)} \right) \right)  \right]\\
& \lesssim a(p) \|\omega^{1/2}h\|_{L^\infty_p} \left[ r_-(p)^{1/9}\min\left( 1,\sqrt{\frac{r_0}{r_{-}(p)}} \right) + r_+(p)^{1/9} \mathbf{1}_{\{r_+(p)<r_0\}} \left(1+ \log\left(\frac{r_0}{r_{+}(p)} \right) \right)  \right] \\& 
\lesssim a(p) r_0^{1/9} \|\omega^{1/2}h\|_{L^\infty_p}. 
\end{align*} 
To see the final scaling, we start with the first term: if $r_- \leq r_0, $ then $r_{-}^{1/9}\leq r_0^{1/9}$. If $r_- > r_0$, then we bound by $ r_-^{1/9} (r_0/r_-)^{1/2} \leq r_0^{1/2} r_{-}^{-7/18} \leq r_0^{1/2}r_0^{-7/18} \leq r_0^{1/9}$. For the second term, we present only when $0<r_+<r_0$, set $x=r_+/r_0$ and use
$\sup_{0<x<1}x^{1/9}(1+\log(1/x))<\infty$ for $0<x<1$.
 
So eventually we have 
$$\omega^{1/2}
    |K_1(\mathbf1_{\mathcal{E}_{r_0}}h)|
    \lesssim  r_0^{1/9} a \|\omega^{1/2}h\|_{L^\infty_p}.
$$
Together with the estimate for $\mu=1/6$, we obtain
\begin{equation} \label{eq:kernel_splitting_K1_edge}
\omega^\mu |K_1(\mathbf1_{\mathcal{E}_{r_0}}h)|
 \lesssim a\|\omega^\mu h\|_{L^\infty_p}
\begin{cases}
        r_0^{1/3}, &\mu=1/6,\\
        r_0^{1/9}, &\mu=1/2.
\end{cases}
\end{equation}

We turn now to the $K_2$-contribution: 
The only singularities of its kernel occur near
$(p,q)=(0,2\pi)$ and $(2\pi,0)$. 
Fix small neighborhoods of these two points,
independently of $r_0$. Near either point, write
$\epsilon = \omega(p)$ and $s =\omega(q)$.
The calculation in the proof of
\cite[Lemma 8(i)]{GLM26}
bounds the absolute value of the kernel by
$$ C \frac{\epsilon^2s^2}{\epsilon^2+s^2}
       \frac1{\sqrt{(\epsilon^2-s^2)^2+\epsilon s}}
    \lesssim
    \begin{cases}
        \epsilon^{-1/2}s^{3/2}, &s\leq \epsilon,\\
        \epsilon^{3/2}s^{-1/2}, &s\geq \epsilon.
    \end{cases}
$$

If $\epsilon \geq r_0$: we multiply by
$\omega(p)^\mu/a(p)$ and use the bound
$|h(q)|\leq s^{-\mu}\|\omega^\mu h\|_{L^\infty_p}$, as $0<s=\omega(q)<r_0\leq\epsilon$ for
$q\in\mathcal{E}_{r_0}$, which 
give the factor
$$
\epsilon^{\mu - 5/3}
  \int_0^{r_0}
    \epsilon^{-1/2}s^{3/2-\mu} ds
    \lesssim
    \epsilon^{\mu-13/6} r_0^{5/2-\mu}
    \leq r_0^{1/3}.
$$
The last inequality holds because $\mu<13/6$.  

If $\epsilon< r_0$, split the integral at $s=\epsilon = \omega(p)$. 
The resulting factor is bounded by
\begin{align*}
    \epsilon^{\mu-5/3} & 
    \left[
        \epsilon^{-1/2}\int_0^\epsilon s^{3/2-\mu} ds + \epsilon^{3/2}\int_\epsilon^{r_0} s^{-1/2-\mu} ds
    \right]\\
    &\hspace{4cm}\lesssim
    \epsilon^{1/3} +
    \epsilon^{\mu-1/6}\int_\epsilon^{r_0}s^{-1/2-\mu} ds.
\end{align*}
For $\mu=1/6$, the last integral is at most
$3r_0^{1/3}$, while for $\mu=1/2$, the last term is
$ \epsilon^{1/3}\log(r_0/\epsilon)
    \lesssim r_0^{1/3}$. 
Thus both cases give the same bound $r_0^{1/3}$. 
Outside fixed neighborhoods of $(0,2\pi)$ and $(2\pi,0)$, $F_+$ is bounded below. The kernel formula and
\cite[Lemma 19]{GLM26} therefore give $|k_2(p,q)|\lesssim \omega(p)^2$.  Its contribution with input in $\mathcal{E}_{r_0}$
then satisfies
\begin{align*}
\omega(p)^\mu \int_{\mathcal{E}_{r_0}}
    \omega(p)^2|h(q)| dq
    &\lesssim \omega(p)^{\mu+2}r_0^{1-\mu}
        \|\omega^\mu h\|_{L^\infty_p} \lesssim
    a(p) r_0^{1/3} \| \omega^\mu h\|_{L^\infty_p}.
\end{align*}
since $\mu+2 \geq 5/3$ and $1-\mu\ge1/3$.
Altogether these estimates give for both values of $\mu$: 
$$  \omega^\mu| K_2(\mathbf{1}_{\mathcal{E}_{r_0}}h)| \lesssim
    a r_0^{1/3}\|\omega^\mu h\|_{L^\infty_p}.
$$
It remains to estimate the input in $\mathcal{}B_{r_0}$.
Let $c'=c/(c-1)<2$.
H\"older's inequality gives us, uniformly in $0<y<2\pi$:
\begin{align*}
\int_0^y \frac{|\mathbf{1}_{\mathcal{B}_{r_0}}h(q)|}
     {\sqrt{y-q}} dq
    &\leq
    \left(\int_0^y(y-q)^{-c'/2}\,dq\right)^{1/c'}
    \|\mathbf{1} _{\mathcal B_{r_0}}h\|_{L^c_p} \lesssim_c\|\mathbf1_{\mathcal B_{r_0}}h\|_{L^c_p}.
\end{align*}
The integral from $y$ to $2\pi$ has the same bound. Thus
$$\omega^\mu|K_1(\mathbf1_{\mathcal B_{r_0}}h)|
    \lesssim_c 
\omega^{\mu+3/2}\|\mathbf{1}_{\mathcal{B}_{r_0}}h\|_{L^c_p} \lesssim_c
    a \|\mathbf{1}_{\mathcal{B}_{r_0}} h \|_{L^c_p}.
$$

The preceding bounds for $K_2$ also show that its kernel is bounded globally by
$ C \omega(p)^{3/2} \omega(q)^{-1/2}$.
Indeed, when $s \leq \epsilon$,
$ \epsilon^{-1/2}s^{3/2}  \leq \epsilon^{3/2}s^{-1/2}
$. 
The other case already has this form.
Outside the two fixed neighborhoods,
$\omega(p)^2\leq \omega(p)^{3/2} \omega(q)^{-1/2}$.

Now since $c'< 2$, we have $\omega^{-1/2}\in L^{c'}_p$ and applying again H\"older's gives
\begin{align*}
    \omega(p)^\mu
    |K_2(\mathbf{1}_{\mathcal{B}_{r_0}}h)(p)|
    &\lesssim
    \omega(p)^{\mu+3/2}
    \int_{\mathcal{B}_{r_0}}
        \omega(q)^{-1/2} |h(q)| dq \lesssim_c a(p)\|\mathbf{1}_{\mathcal{B}_{r_0}}h\|_{L^c_p}.
\end{align*}
where we used that  $\mu+3/2\geq 5/3$ and $a \sim \omega^{5/3}$.  
Combining everything and using $K=-K_1+2K_2$,
we conclude that, for $0<r_0<1$: 
$$
 \omega(p)^\mu |Kh(p)|
    \leq a(p)\left[
C r_0^{1/9}\|\omega^\mu h\|_{L^\infty_p}
+C_c\|\mathbf{1}_{\mathcal{B}_{r_0}}h\|_{L^c_p}
\right].
$$ 
Choosing $r_0$ sufficiently small that
$Cr_0^{1/9} \leq \eta$ proves the lemma.
\end{proof}

We next combine the $L^2_p$ decay estimates with the smoothing estimate \ref{lem:app_Kernelbound_edge} in the Appendix to obtain $L^c_p$ decay
on any fixed bulk, for $c>2$.

\begin{lemma}\label{lem:bulkdecay_linear}
Fix $2<c<\infty$ and a radius $0<r_0<1$.  Let $g(t)=e^{tB(\xi)}f$ and $\Pi f=0$ with  $ \|\omega^{-1/2}f\|_{L^\infty_p} <\infty.$ Then for each fixed $\xi\neq 0$: 
\begin{equation} \label{eq:bulk_decay_linear}
\sup_{t\geq 0}\ \langle t \rangle^\rho
\|\mathbf{1}_{\mathcal B} e^{tB(\xi)}f\|_{L^c_p}
 \lesssim_{\rho,c,r_0,\xi_*} 
    \|\omega^{-1/2}f\|_{L^\infty_p} +
    \mathbf{1}_{\{0<|\xi|<\xi_*\}}
|\xi|^{-2\rho/3}\|f\|_{\mathscr{Y}}.
\end{equation} 
\end{lemma}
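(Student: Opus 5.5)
We will argue from Duhamel's formula written around the free transport--loss semigroup,
\[
g(t)=e^{-t(a+i\xi v)}f+\int_0^t e^{-(t-s)(a+i\xi v)}Kg(s)\,ds ,
\]
and restrict both sides to the fixed bulk $\mathcal{B}=\mathcal{B}_{r_0}$. There $a\geq c_a r_0^{5/3}$ and $|e^{-t(a+i\xi v)}|\leq e^{-c_ar_0^{5/3}t}$, so the free term decays exponentially. Since $\omega\leq 1$, we have $|f|\leq\omega^{-1/2}\|\omega^{-1/2}f\|_{L^\infty_p}\lesssim_{r_0}\|\omega^{-1/2}f\|_{L^\infty_p}$ on $\mathcal{B}$. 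Hence this term contributes at most $C_{r_0}e^{-ct}\|\omega^{-1/2}f\|_{L^\infty_p}$ in $L^c_p(\mathcal{B})$, which is acceptable.

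For the Duhamel integral we use the smoothing of $K$ from $L^2_p$ into a weighted $L^R_p$ space, Lemma \ref{lem:app_Kernelbound_edge}. We take $R=c$, or any $R\geq c$ with Hölder on the finite-measure bulk. On $\mathcal{B}$ the weight $\omega^{-3/2+\varepsilon}$ is bounded above and below by constants depending on $r_0$, so
\[
\|\mathbf{1}_{\mathcal{B}}Kg(s)\|_{L^c_p}\lesssim_{r_0,c}\|g(s)\|_{L^2_p}.
\]
Combining this with the exponential decay of the bulk multiplier gives
\[
\|\mathbf{1}_{\mathcal{B}}g(t)\|_{L^c_p}\lesssim e^{-ct}\|\omega^{-1/2}f\|_{L^\infty_p}+\int_0^t e^{-c(t-s)}\|g(s)\|_{L^2_p}\,ds .
\]
If $c$ is too large for the appendix exponents, we will first treat $R\in(2,\infty)$ as given there and reach general $c$ by one more Duhamel iteration, or choose $\varepsilon,R$ compatibly. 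This is the only step where the exponent bookkeeping could need care.

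Next we insert the $L^2_p$ decay already proved: \eqref{eq:small_frequency_L2_decay} for $0<|\xi|<\xi_*$ and \eqref{eq:large_frequency_L2_decay} for $|\xi|\geq\xi_*$. These give
\[
\|g(s)\|_{L^2_p}\lesssim\langle s\rangle^{-\rho}\Bigl(\|\omega^{-\theta}f\|_{L^2_p}+\mathbf{1}_{\{0<|\xi|<\xi_*\}}|\xi|^{-2\rho/3}\|f\|_{\mathscr{Y}}\Bigr).
\]
The convolution of $e^{-c(t-s)}$ with $\langle s\rangle^{-\rho}$ is $\lesssim\langle t\rangle^{-\rho}$, which we check by splitting at $s=t/2$. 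Finally, $\theta<1$ gives $\|\omega^{-\theta}f\|_{L^2_p}\lesssim_\theta\|\omega^{-1/2}f\|_{L^\infty_p}$, as noted before Lemma \ref{lem:estimates_linear}. This yields \eqref{eq:bulk_decay_linear}.

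The orthogonality hypothesis $\Pi f=0$ is used only through the small-frequency $L^2_p$ estimate. If $\|f\|_{\mathscr{Y}}=\infty$ the statement is vacuous, so we may assume it is finite there.
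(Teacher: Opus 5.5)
Your proposal is correct and follows the paper's proof essentially step by step: Duhamel's formula around the transport--loss semigroup restricted to the bulk, the appendix smoothing lemma with $R=c$, the small- and large-frequency $L^2_p$ decay estimates combined with $\|\omega^{-\theta}f\|_{L^2_p}\lesssim\|\omega^{-1/2}f\|_{L^\infty_p}$, and the exponential--polynomial convolution bound. Your hedge about large $c$ is unnecessary, since the appendix lemma holds for every $R>2$ once you choose $\varepsilon>2/c$; this is exactly what the paper does, so no extra Duhamel iteration is ever needed.
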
 

\begin{proof} We have $  a_{\mathcal B}:= \operatorname*{ess\,inf}_{p\in\mathcal B}a(p)\gtrsim r_0^{5/3}>0 $,  
since $a\gtrsim\omega^{5/3}$.   
Choose $\varepsilon>0$ be fixed in terms of $c$ so that $\varepsilon c > 2$.
The appendix smoothing estimate in Lemma \ref{lem:app_Kernelbound_edge}, applied with $R=c$, gives
$\|\omega^{-3/2+\varepsilon}Kh\|_{L^c_p}    \lesssim_{\varepsilon,c}\|h\|_{L^2_p}$.  
On $\mathcal B$, the multiplier
$\omega^{3/2-\varepsilon}$ is bounded. Thus,
\begin{align} \label{eq:bulk_gain_L2_Lc}
    \|\mathbf{1}_{\mathcal{B}} K h \|_{L^c_p}
    \leq
    \|\mathbf{1}_{\mathcal B}
        \omega^{3/2-\varepsilon} \|_{L^\infty_p}
    \|\omega^{-3/2+\varepsilon} Kh\|_{L^c_p}
    \lesssim_{r_0,c}\|h\|_{L^2_p}.
\end{align}

The Duhamel formula for 
$g(t) = e^{tB(\xi)}f$ for $B(\xi)=-a-i\xi v+K$ is
\begin{equation}  \label{eq:linear_free_Duhamel}
  g(t) =   e^{-t(a+i\xi v)}f +
    \int_0^t
        e^{-(t-s)(a+i\xi v)}Kg(s) ds.
\end{equation}
The transport factor has modulus one: $ |e^{-i\xi v(p)(t-s)}|=1$. So taking the $L^c_p$ norm on the bulk and using 
\eqref{eq:bulk_gain_L2_Lc}, yields 
\begin{align} \label{eq:bulk_Duhamel_Lc}
\|\mathbf{1}_{\mathcal{B}}g(t)\|_{L^c_p}
    &\leq 
    e^{-a_{\mathcal{B}}t}
        \|\mathbf{1}_{\mathcal{B}} f\|_{L^c_p}
    + \int_0^t e^{-a_{\mathcal{B}}(t-s)}
  \|\mathbf{1}_{\mathcal{B}} Kg(s)\|_{L^c_p} ds
    \notag \\
    &\leq
    e^{-a_{\mathcal{B} }t}
        \|\mathbf1_{\mathcal{B}}f\|_{L^c_p}
 + C_{r_0,c}\int_0^t
  e^{-a_{\mathcal B}(t-s)}\|g(s)\|_{L^2_p} ds.
\end{align}

Now we may insert our small- and large-frequency estimates \eqref{eq:small_frequency_L2_decay} and \eqref{eq:large_frequency_L2_decay}, together with 
$ \|\omega^{-\theta} f \|_{L^2_p}   \lesssim_\rho\|\omega^{-1/2}f\|_{L^\infty_p}.
$
This last estimate we see if we write 
$\|\omega^{-\theta} f \|_{L^2_p}^2 \leq
    \|\omega^{-1/2}f\|_{L^\infty_p}^2
    \int_{\mathbb{T} }\omega^{1-2\theta} dp$, since  this last integral is finite.
And so for every $\xi \neq 0$,
\begin{equation} \label{eq:bulk_input_L2_decay}
    \| g(s) \|_{L^2_p}
    \lesssim \langle s \rangle^{-\rho}
    \left(
        \|\omega^{-1/2}f\|_{L^\infty_p}
        +\mathbf{1}_{\{0<|\xi|<\xi_*\}}
        |\xi|^{-2\rho/3}\|f\|_{\mathscr{Y}}
    \right).
\end{equation}
Altogether, taking into account the  elementary convolution bound
\begin{equation} \label{eq:bulk_exponential_convolution}
    \int_0^t e^{-a_{\mathcal B}(t-s)}
        \langle s\rangle^{-\rho}\,ds
    \lesssim_{\rho,a_{\mathcal B}}
        \langle t\rangle^{-\rho}, 
\end{equation}
and that  $ \|\mathbf{1}_{\mathcal{B}}f\|_{L^c_p}
     \lesssim_c \|\omega^{-1/2}f\|_{L^\infty_p}$, implies the claim. 
\end{proof}

\medskip

Now Lemmas \ref{lem:kernel_splitting_linear} and \ref{lem:bulkdecay_linear} imply a decay in the weighted pointwise topology: 
 
\begin{lemma}[Weighted pointwise decay] 
Let $\Pi f=0$ and $\|\omega^{-1/2} f\|_{L^\infty_p}<\infty$. We set $b_{1/6}:= 2/5$ and $b_{1/2}:= \rho$. Then for  $\mu\in \{1/6 , 1/2\}$ and every $\xi \neq 0$, we have   
\begin{equation} \label{eq:pointwise_mode_decay_linear}
\sup_{t\geq 0}\ \langle t \rangle^{b_\mu}
 \|\omega^\mu e^{tB(\xi)}f\|_{L^\infty_p}
 \lesssim_\rho 
 \|\omega^{-1/2}f\|_{L^\infty_p}+
\mathbf{1}_{\{0<|\xi|<\xi_*\}}
    |\xi|^{-2\rho/3}\|f\|_{\mathscr{Y}}.
\end{equation}
\end{lemma}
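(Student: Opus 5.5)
We combine the Duhamel formula \eqref{eq:linear_free_Duhamel} with the kernel splitting of Lemma \ref{lem:kernel_splitting_linear} and the bulk decay of Lemma \ref{lem:bulkdecay_linear}. Fix $c>2$ and write $M_\mu(t):=\sup_{0\le s\le t}\langle s\rangle^{b_\mu}\|\omega^\mu g(s)\|_{L^\infty_p}$ with $g(s)=e^{sB(\xi)}f$, and let $\mathcal{D}:=\|\omega^{-1/2}f\|_{L^\infty_p}+\mathbf 1_{\{0<|\xi|<\xi_*\}}|\xi|^{-2\rho/3}\|f\|_{\mathscr Y}$. Multiplying \eqref{eq:linear_free_Duhamel} by $\omega^\mu$ and using $|e^{-i\xi v(t-s)}|=1$ gives, pointwise in $p$,
\begin{equation*}
\omega^\mu|g(t,p)|\le e^{-ta(p)}\omega^{\mu}|f(p)|+\int_0^t e^{-(t-s)a(p)}\,\omega^\mu|Kg(s)(p)|\,ds .
\end{equation*}

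\textbf{Free term.} Since $\omega^\mu|f|\le\omega^{\mu+1/2}\|\omega^{-1/2}f\|_{L^\infty_p}$ and $a\sim\omega^{5/3}$, we have $\sup_p\omega^{\mu+1/2}e^{-ta}\lesssim\langle t\rangle^{-\frac35(\mu+\frac12)}$. For $\mu=1/6$ this gives exactly $\langle t\rangle^{-2/5}$, and for $\mu=1/2$ it gives $\langle t\rangle^{-3/5}\le\langle t\rangle^{-\rho}$. This is precisely where the exponents $b_\mu$ come from.

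\textbf{Duhamel term.} Lemma \ref{lem:kernel_splitting_linear} with a small $\eta$ bounds the integrand by
\begin{equation*}
a(p)e^{-(t-s)a(p)}\Big(\eta\langle s\rangle^{-b_\mu}M_\mu(t)+C_\eta\|\mathbf 1_{\mathcal B}g(s)\|_{L^c_p}\Big).
\end{equation*}
Lemma \ref{lem:bulkdecay_linear} gives $\|\mathbf 1_{\mathcal B}g(s)\|_{L^c_p}\lesssim\langle s\rangle^{-\rho}\mathcal D$, with the bulk radius $r_0$ fixed by $\eta$. Since $b_\mu\le\rho$, everything then rests on the elementary estimate
\begin{equation*}
\sup_{\alpha>0}\int_0^t\alpha e^{-(t-s)\alpha}\langle s\rangle^{-b}\,ds\lesssim_b\langle t\rangle^{-b},\qquad 0<b<1.
\end{equation*}
To prove it, split the integral at $s=t/2$. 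On $[t/2,t]$ the factor $\langle s\rangle^{-b}\sim\langle t\rangle^{-b}$ comes out, and $\int\alpha e^{-(t-s)\alpha}\,ds\le1$. On $[0,t/2]$ we have $\alpha e^{-\alpha t/2}\lesssim t^{-1}$ uniformly in $\alpha$, and $\int_0^{t/2}\langle s\rangle^{-b}\,ds\lesssim t^{1-b}$, so this part is also $\lesssim\langle t\rangle^{-b}$. This uniformity in $\alpha=a(p)$ is the key point: the $a(p)$ prefactor from Lemma \ref{lem:kernel_splitting_linear} exactly compensates the degenerate damping near the edges.

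\textbf{Closing.} Combining the two terms yields $\langle t\rangle^{b_\mu}\|\omega^\mu g(t)\|_{L^\infty_p}\le C\mathcal D+C\eta M_\mu(t)$. Taking the supremum over $[0,t]$ and choosing $C\eta\le1/2$, we absorb and conclude \eqref{eq:pointwise_mode_decay_linear}.

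\textbf{Main obstacle.} The absorption needs $M_\mu(t)<\infty$ a priori. We get it on finite intervals by running the same Duhamel argument without time weights: there we have the bound $\int_0^t a e^{-(t-s)a}\,ds\le1$ and the same $\eta$-absorption, which shows $\sup_{s\le t}\|\omega^\mu g(s)\|_{L^\infty_p}\lesssim\mathcal D$ directly. A further thing to check is that the constant in Lemma \ref{lem:bulkdecay_linear} depends on $r_0$, hence on $\eta$, but $\eta$ is fixed once and for all, so this is harmless.
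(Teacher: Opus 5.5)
Your main argument is the same as the paper's: Duhamel, the free-term rate $\langle t\rangle^{-\frac35(\mu+\frac12)}$, Lemma~\ref{lem:kernel_splitting_linear} with $r_0$ fixed by $\eta$, the bulk input from Lemma~\ref{lem:bulkdecay_linear}, the $\alpha$-uniform convolution bound, and absorption. Your proof of the convolution bound by splitting at $t/2$ is fine. The gap is in the step you yourself flag, the a priori finiteness of $M_\mu(t)$, and your proposed fix is circular.

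Dropping the time weights does not help. On a bounded interval $[0,T]$ the weights $\langle s\rangle^{b_\mu}$ lie between $10^{b_\mu}$ and $\langle T\rangle^{b_\mu}$, so $\sup_{s\le T}\|\omega^\mu g(s)\|_{L^\infty_p}$ is finite exactly when $M_\mu(T)$ is. The unweighted inequality has the same form $X\le C\mathcal D+\tfrac12 X$, and subtracting $\tfrac12 X$ again presupposes $X<\infty$. Moreover, Lemma~\ref{lem:kernel_splitting_linear} is stated only for $h$ with $\omega^\mu h\in L^\infty_p$, so without finiteness you cannot even apply it to $g(s)$.

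What is missing is an independent reason why $e^{tB(\xi)}f$ stays in the weighted space on finite time intervals. The paper gets this from Lemma~\ref{lem:kernel_splitting_linear} with $\mu=1/6$, combined with $\|\mathbf 1_{\mathcal B}h\|_{L^c_p}\le|\mathbb T|^{1/c}r_0^{-1/6}\|\omega^{1/6}h\|_{L^\infty_p}$. Together these show that $K$ is bounded on the Banach space normed by $\|\omega^{1/6}h\|_{L^\infty_p}$. Multiplication by $a+i\xi v$ is bounded there too, so $B(\xi)$ is a bounded operator on this space and its exponential is defined there.

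This space embeds continuously into $L^2_p$, since $\omega^{-1/3}$ is integrable. It also contains $f$, since $\|\omega^{1/6}f\|_{L^\infty_p}\le\|\omega^{-1/2}f\|_{L^\infty_p}$. Uniqueness in $L^2_p$ then identifies this evolution with $e^{tB(\xi)}f$. Hence $x_{1/6}$ is bounded on bounded time intervals, and so is $x_{1/2}$, because $\omega^{1/2}\le\omega^{1/6}$.

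A short-time Picard iteration of the Duhamel formula in this weighted space would work equally well. Either way, boundedness of $K$ on $\omega^{-1/6}L^\infty_p$ is the ingredient you need; with it inserted, your proof is complete.
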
 
\begin{proof} 
We write  $ x_\mu(t):=\|\omega^\mu g(t)\|_{L^\infty_p}.$  Since $|f(p)|\leq \omega(p)^{1/2}
\|\omega^{-1/2}f\|_{L^\infty_p}$ and the transport phase has modulus one,
$$ \|\omega^\mu e^{-t(a+i\xi v)}f\|_{L^\infty_p}
    \leq
    \|\omega^{-1/2}f\|_{L^\infty_p}
    \sup_p\omega(p)^{\mu+1/2}e^{-ta(p)} \lesssim 
    \langle t\rangle^{-\gamma_\mu}
    \|\omega^{-1/2}f\|_{L^\infty_p},
    \quad
    \gamma_\mu:=\frac{3}{5} \left(\mu+\frac{1}{2} \right).
$$

We are going to use the following uniform convolution estimate: for every $0< b< 1$, there is a constant $C_b$ such that
\begin{equation} \label{eq:weighted_time_convolution}
 \sup_{\alpha \geq 0}  \int_0^t
\alpha e^{-\alpha(t-s)}\langle s\rangle^{-b} ds
    \leq C_b\langle t \rangle^{-b},
    \qquad t\geq 0.
\end{equation}

We choose a bulk, i.e. $r_0$, as follows: 
Fix $2<c<\infty$ and choose $\eta>0$ sufficiently small so that the constants from
\eqref{eq:weighted_time_convolution} satisfy 
$$ \eta\max\{C_{2/5},C_\rho\}\leq\frac{1}{2}. 
$$

Then choose a single $r_0>0$ for which
Lemma \ref{lem:kernel_splitting_linear} holds for both
$\mu=1/6$ and $\mu=1/2$ with this coefficient $\eta$.  
We apply the preceding fixed-bulk argument for this particular choice of $r_0$:
\begin{equation}\label{eq:weighted_pointwise_bulk_input}
\|\mathbf{1}_{\mathcal B}g(s)\|_{L^c_p}
\leq C_{\rm bulk} \langle s \rangle^{-\rho}
 \Big[
    \|\omega^{-1/2}f\|_{L^\infty_p} +
        \mathbf{1}_{\{0<|\xi|<\xi_*\}}
   |\xi|^{-2\rho/3}\|f\|_{\mathscr{Y}}
    \Big].
\end{equation}
Then from Duhamel \eqref{eq:linear_free_Duhamel} and \eqref{eq:kernel_splitting_linear}, we get 
\begin{align}
    x_\mu(t)
    \leq 
    C\langle t\rangle^{-\gamma_\mu}&
        \|\omega^{-1/2}f\|_{L^\infty_p}
  + \eta\operatorname*{ess\,sup}_{p}
    \int_0^t
 a(p)e^{-a(p)(t-s)}x_\mu(s) ds
    \notag\\
    &\hspace{2cm}+
    C_{\eta,c,r_0}\operatorname*{ess\,sup}_{p}
    \int_0^t
        a(p)e^{-a(p)(t-s)}
\|\mathbf{1}_{\mathcal{B}}g(s)\|_{L^c_p} ds.
\label{eq:weighted_pointwise_Duhamel}
\end{align}

For $0\leq s\leq t\leq T$, we write 
$ x_\mu(s) \leq
    \langle s \rangle^{-b_\mu}
    \sup_{0\leq \tau\leq T}
        \langle \tau \rangle^{b_\mu} x_\mu(\tau)$. 
Using \eqref{eq:weighted_time_convolution}, we therefore get
\begin{align*}
    &\operatorname*{ess\,sup}_{p}
    \int_0^t a(p) e^{-a(p)(t-s)}x_\mu(s)\,ds \leq
    C_{b_\mu}\langle t\rangle^{-b_\mu}
    \sup_{0\leq \tau\leq T}
        \langle \tau\rangle^{b_\mu} x_\mu(\tau).
\end{align*}

For the bulk term,
\eqref{eq:weighted_pointwise_bulk_input} and
\eqref{eq:weighted_time_convolution}, with $b=\rho$, give
\begin{align*}
    &\operatorname*{ess\,sup}_{p}
    \int_0^t a(p)e^{-a(p)(t-s)}
        \|\mathbf1_{\mathcal B}g(s)\|_{L^c_p}\,ds \lesssim
    \langle t\rangle^{-\rho}
    \left(
        \|\omega^{-1/2}f\|_{L^\infty_p}
        + \mathbf{1}_{\{0<|\xi|<\xi_*\}}
        |\xi|^{-2\rho/3}\|f\|_{\mathscr{Y}}
    \right).
\end{align*}

Since $b_{1/6}=2/5$ and $b_{1/2}=\rho$ we have 
$b_\mu \leq \min\{\gamma_\mu,\rho\}$. We multiply  
\eqref{eq:weighted_pointwise_Duhamel} by
$\langle t \rangle^{b_\mu}$ and take  the supremum over
$0\leq t\leq T$, which yields
\begin{align*}
    \sup_{0\leq t\leq T}
        \langle t \rangle^{b_\mu}x_\mu(t)
    \leq &
    C\left(
        \|\omega^{-1/2}f\|_{L^\infty_p}
        +
        \mathbf{1}_{\{0<|\xi|<\xi_*\}}
    |\xi|^{-2\rho/3}\|f\|_{\mathscr{Y}}
    \right)\\
    &\hspace{4cm}+
    \eta C_{b_\mu}
    \sup_{0\leq t\leq T}
        \langle t \rangle^{b_\mu} x_\mu(t).
\end{align*}
Our choice $\eta C_{b_\mu}\leq 1/2$ allows us to subtract the last term and obtain a bound independent of $T$.
Letting $T \to\infty$, we conclude  \eqref{eq:pointwise_mode_decay_linear} for $\xi\neq 0$.    
Of course, in order for the last steps to make sense we need to make sure that $x_\mu(t)$ is finite on every bounded time interval. We leave below a proof of this fact. 

The kernel estimate in Lemma \ref{lem:kernel_splitting_linear} with $\mu=1/6$ gives
$$  \|\omega^{1/6}Kh\|_{L^\infty_p}
    \leq \|a\|_{L^\infty_p}
    \left( \eta\|\omega^{1/6}h\|_{L^\infty_p}
    +C_{\eta,c,r_0}
    \|\mathbf{1}_{\mathcal B}h\|_{L^c_p}
    \right).
$$ 
On the fixed bulk, 
$$
\|\mathbf{1}_{\mathcal{B}} h\|_{L^c_p}
    \leq |\mathbb{T}|^{1/c}r_0^{-1/6}
        \|\omega^{1/6}h\|_{L^\infty_p}.
$$
Thus $K$ is bounded on the Banach space with norm
$\|\omega^{1/6}h\|_{L^\infty_p}$. 
Multiplication by $a+i\xi v$ is also bounded there for each fixed $\xi$. So its bounded-operator evolution exists on every finite time interval. 
This weighted space embeds continuously into $L^2_p$ since 
$\|h\|_2^2 \leq
    \|\omega^{1/6}h\|_{L^\infty_p}^2
    \int_{\mathbb{T}}\omega^{-1/3} dp
    <\infty.$  
The weighted-space evolution therefore agrees with 
$e^{tB(\xi)}f$ by uniqueness in $L^2_p$.
The initial datum belongs to this space because 
$$
  \|\omega^{1/6}f\|_{L^\infty_p}
    \le\|\omega^{-1/2}f\|_{L^\infty_p}.
$$
so  $x_{1/6}(t)$ is bounded on every finite
time interval. Since $\omega^{1/2}\leq\omega^{1/6}$,
the same holds for $x_{1/2}(t)$.
\end{proof}

\medskip

\begin{proof}[Proof of Theorem \ref{thm:linear}] 
Let us first ensure that our assumptions imply that $g_0 \in L^2_{x,p}$ where our  semigroup $e^{tB(\xi)}$ is well-defined and continuous. 
First for $r=2$ we use the fact that  $\|f\|_{L^2_p}\lesssim\|\omega^{-1/2}f\|_{L^\infty_p}$
and Plancherel, since then for $f=\widehat{g}_0$ $$
\| \widehat{g}_0\|_{L^2_{\xi,p}}^2 \lesssim \int_{\xi \in \mathbb{R}} \|\omega^{-1/2}\widehat{g}_0(\xi)\|_{L^\infty_p}^2 d\xi := X_{-1/2,2}[g_0]^2 <\infty.
$$ 
 For $r=2$: we have 
$ \|f\|_{L^2_p}^2 \leq \|\omega^{-1/2}f\|_{L^\infty_p} \int_{\mathbb{T}}\omega^{1/2}|f| dp \lesssim \|\omega^{-1/2}f\|_{L^\infty_p} \|f\|_{\mathscr{Y}}.  $ Take $f = \widehat{g}_0(\xi,\cdot)$ and integrate over $\xi$: 
$$
\|\widehat{g}_0\|_{L^2_{\xi,p}}^2
\lesssim \int_{\xi \in \mathbb{R}} 
\|\omega^{-1/2}\widehat{g}_0(\xi)\|_{L^\infty_p}
\|\widehat{g}_0(\xi)\|_{\mathscr{Y}}\ d\xi \leq
\|\widehat{g}_0\|_{ L^\infty_\xi \mathscr{Y} }
\int_{\mathbb{R}} \|\omega^{-1/2}\widehat{g}_0 (\xi)\|_{L^\infty_p} d \xi 
=: \|\widehat{g}_0\|_{ L^\infty_\xi \mathscr{Y} } X_{-1/2,1}[g_0]<\infty. 
$$
thus again by Plancherel, $g_0 \in L^2_{x,p}$.

Since $\Pi$ acts only in $p$, it commutes with the spatial Fourier transform. Thus,
$$ \Pi\widehat g_0(\xi,\cdot) =
    \widehat{\Pi g_0}(\xi,\cdot) =0
$$
for almost every $\xi$. The orthogonality condition required by the mode-by-mode decay estimates is therefore satisfied. 
and the preceding estimates apply with
$f=\widehat g_0(\xi,\cdot)$: 
For $\mu \in \{1/6,1/2\}$, they give
\begin{align}
    \|\omega^\mu\widehat g(t,\xi)\|_{L^\infty_p}
    \lesssim_\rho &
    \langle t \rangle^{-b_\mu}
    \Big[
     \|\omega^{-1/2}\widehat g_0(\xi)\|_{L^\infty_p}
+  \mathbf{1}_{\{0<|\xi|<\xi_*\}}
        |\xi|^{-2\rho/3}
        \|\widehat g_0(\xi)\|_{\mathscr{Y} }
    \Big],
    \label{eq:Fourier_pointwise_input}
\end{align}
where
$b_{1/6}=\frac{2}{5}, b_{1/2}=\rho,$ and $\frac{1}{2}< \rho< \frac{3}{5}.$
\\

For $r=1,2$, we have $2\rho r/3<1$,  since $\rho < 3/5$. 
Taking now the $L^r_\xi$ norm in 
\eqref{eq:Fourier_pointwise_input}, using the triangle
inequality and that 
\begin{align}\label{eq:Fourier_singular_factor_bound}
 \left\| \mathbf{1}_{\{0<|\xi|<\xi_*\}}
 |\xi|^{-2\rho/3}
\|\widehat{g}_0(\xi)\|_{\mathscr{Y}}  \right\|_{L^r_\xi}
    \leq \|\widehat{g}_0 \|_{L^\infty_\xi\mathscr{Y}}
\left(\frac{2\xi_*^{1-2\rho r/3}}
 {1-2\rho r/3} \right)^{1/r} \lesssim_{\rho,\xi_*} \|\widehat{g}_0\|_{L^\infty_\xi \mathscr{Y}}, 
\end{align}
gives
\begin{equation} \label{eq:Fourier_weighted_decay}
 X_{\mu,r}[S(t)g_0]
 \lesssim_\rho 
    \langle t\rangle^{-b_\mu}
 \Big( X_{-1/2,r}[g_0]+\|\widehat{g}_0\|_{L^\infty_\xi\mathscr{Y}} \Big), \quad r=1,2.
\end{equation}

Now in order to get a weighted pointwise decay in physical space we use that for almost every $p$, Fourier inversion and the triangle
inequality give
$$ |\omega(p)^\mu g(t,x,p)| \leq
 \frac{1}{2\pi}\int_{\mathbb{R}}
 |\omega(p)^\mu\widehat g(t,\xi,p)| d\xi.
$$
Taking the essential supremum in $(x,p)$ yields 
\begin{align*}
    \|\omega^\mu g(t)\|_{L^\infty_{x,p}}
    \leq
    \frac1{2\pi}\int_{\mathbb{R}}
        \|\omega^\mu\widehat g(t,\xi)\|_{L^\infty_p}\ d\xi = \frac{1}{2\pi} X_{\mu,1}[g(t)].
\end{align*}
which is finite by
\eqref{eq:Fourier_weighted_decay}.
Thus 
\begin{equation}
    \|\omega^\mu S(t)g_0\|_{L^\infty_{x,p}}
    \lesssim_\rho
    \langle t\rangle^{-b_\mu}
    \big[ X_{-1/2,1}[g_0]+\|\widehat{g}_0\|_{L^\infty_\xi\mathscr{Y}}
    \big].
    \label{eq:physical_weighted_decay}
\end{equation}
For the unweighted decay estimate \eqref{eq:decay_L2}:   
We use the preceding $L^2_p$ bounds from the application of the abstract Lemma \ref{lem:abstract_energy} 
directly. Combining the small- and large-frequency 
estimates gives, for a.e. $\xi$,
\begin{align*}
    \|\widehat g(t,\xi)\|_{L^2_p}
    \lesssim_\rho
    \langle t\rangle^{-\rho}
    \Big[ 
    \|\omega^{-\theta} 
    \widehat{g}_0(\xi) \|_{L^2_p} +
        \mathbf{1}_{\{0<|\xi|<\xi_*\}}
        |\xi|^{-2\rho/3}
        \|\widehat g_0(\xi)\|_{\mathscr{Y}}
    \Big]. 
\end{align*}
Since $\theta < 1$,
$ \|\omega^{-\theta} 
 \widehat{g}_0(\xi) \|_{L^2_p} 
 \lesssim_\rho \|\omega^{-1/2} \widehat{g}_0 (\xi)\|_{L^\infty_p}.$ 
Taking now $L^2_\xi$ and applying
\eqref{eq:Fourier_singular_factor_bound} with $r=2$,
we obtain
$$ \|\widehat g(t)\|_{L^2_{\xi,p}}
 \lesssim_\rho
 \langle t\rangle^{-\rho}
\big[X_{-1/2,2}[g_0]+\|\widehat{g}_0\|_{L^\infty_\xi\mathscr{Y}} \big], 
$$ 
which by Plancherel implies the $L^2$-unweighted decay  
\begin{equation}   \label{eq:physical_L2_decay}
\|S(t)g_0\|_{L^2_{x,p}} \lesssim_\rho
 \langle t\rangle^{-\rho}\big[ 
 X_{-1/2,2}[g_0]+\|\widehat{g}_0\|_{L^\infty_\xi\mathscr{Y}}
 \big].
\end{equation}

Finally to derive the exact stated decay rates in \eqref{eq:decay_weight6}, \eqref{eq:decay_weight-2}:  
given $0<\delta<1/10$, choose
$\rho=\frac{3}{5}-\delta \in (1/2,3/5)$.  
Equations \eqref{eq:Fourier_weighted_decay} and
\eqref{eq:physical_weighted_decay} now give for $r=1,2$, 
\begin{align*}
X_{1/6,r}[S(t)g_0]
&\lesssim_\delta  \langle t \rangle^{-2/5}
\big( X_{-1/2,r}[g_0]+\|\widehat{g}_0\|_{L^\infty_\xi\mathscr{Y}}\big),\\
 X_{1/2,r}[S(t)g_0]
 &\lesssim_\delta
\langle t\rangle^{-3/5+\delta} \big( X_{-1/2,r}[g_0]+\|\widehat{g}_0\|_{L^\infty_\xi\mathscr{Y}} \big). 
\end{align*}
\end{proof}

\section{Nonlinear Stability} \label{sec:nonlinear} 
We recall the norm notation we are using: 
$\|h\|_{\mathcal{X}} : =X_{1/6,1}[h]+X_{1/6,2}[h]$ and $\mathscr{Y}=L^2(a^{-1}dp)$.
 In this section we are going to prove the following theorem:

\begin{theorem}[Nonlinear stability]
\label{thm:nonlinear_stability}
Fix $0<\delta<1/10$. Suppose that $g_0\in\mathcal{X}$ with 
$ \Pi g_0(x,\cdot)=0$ for almost every $x$, 
and
$$ \varepsilon :=X_{-1/2,1}[g_0]+X_{-1/2,2}[g_0]+\|\widehat{g}_0\|_{L^\infty_\xi\mathscr{Y}} 
$$
is sufficiently small. Then \eqref{eq:nonlinear_eq} has a
unique global mild solution $g\in C([0,\infty);\mathcal{X})$.
For $r=1,2$ and $t\geq 0$,
\begin{align}
    X_{1/6,r}[g(t)]
 \lesssim_\delta \varepsilon  \langle t\rangle^{-2/5}&,
  \qquad 
X_{1/2,r}[g(t)]
\lesssim_\delta \varepsilon\langle t\rangle^{-3/5+\delta}, \text{and } 
     \label{eq:nonlinear1/2}\\
    \|g(t)\|_{L^2_{x,p}}
       &\lesssim_\delta \varepsilon\langle t\rangle^{-3/5+\delta}.
       \label{eq:nonlinear_L2}
\end{align}
In particular,
$$ \|\omega^{1/6}g(t)\|_{L^\infty_{x,p}}
\lesssim_\delta\varepsilon\langle t\rangle^{-2/5},
    \quad \text{ and }\quad
    \|\omega^{1/2}g(t)\|_{L^\infty_{x,p}}
       \lesssim_\delta\varepsilon\langle t\rangle^{-3/5+\delta}.
$$
\end{theorem}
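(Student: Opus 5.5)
We write the solution in mild form via Duhamel's formula in Fourier variables,
$$\widehat g(t,\xi)=e^{tB(\xi)}\widehat g_0(\xi)+\int_0^t e^{(t-s)B(\xi)}\big[\widehat{\Gamma(g,g)}+\widehat{Q(g,g,g)}\big](s,\xi)\,ds,$$
and close a fixed point argument in the space of $g\in C([0,\infty);\mathcal X)$ with finite bootstrap norm
$$\|g\|_{\mathbb T}:=\sup_{t\ge0}\sum_{r=1,2}\Big(\langle t\rangle^{2/5}X_{1/6,r}[g(t)]+\langle t\rangle^{3/5-\delta}X_{1/2,r}[g(t)]\Big).$$
Theorem~\ref{thm:linear} bounds the linear part by $C_\delta\varepsilon$. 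The Duhamel term can also be estimated by Theorem~\ref{thm:linear}, applied at each time $s$ with datum $N(s):=\Gamma(g,g)+Q(g,g,g)$. This requires two facts. First, $\Pi N(s)(x,\cdot)=0$, which holds by the cancellation identities of the collision expansion: the collision invariants $1,\omega$ are conserved by $\mathcal C[\mathfrak f(1+g)]$, hence each homogeneous piece is orthogonal to $\mathfrak f,\omega\mathfrak f$ in the appropriate sense. Second, we need bounds on $X_{-1/2,r}[N(s)]$ and $\|\widehat N(s)\|_{L^\infty_\xi\mathscr Y}$ that decay fast enough in $s$ for the time convolution $\int_0^t\langle t-s\rangle^{-b}\langle s\rangle^{-\beta}ds$ to reproduce $\langle t\rangle^{-b}$. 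Here $b=2/5$ or $3/5-\delta$, and it suffices to have $\beta>1$.

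The core step is the multilinear estimates, carried out in Fourier variables where products become convolutions in $\xi$. Young's inequality gives $\|\,\|\widehat{fg}\|_{L^\infty_p}\|_{L^r_\xi}\le \|\,\|\widehat f\|_{L^\infty_p}\|_{L^1_\xi}\|\,\|\widehat g\|_{L^\infty_p}\|_{L^r_\xi}$, so the $L^1_\xi$ factor always carries the $r=1$ norm and the other factor the $r$-norm. Pointwise in $p$, I would use the kernel bounds from \cite{GLM26} and the Appendix. These should give, for example, $|\omega^{-1/2}\Gamma(h,k)(p)|\lesssim \|\omega^{1/6}h\|_{L^\infty_p}\|\omega^{1/2}k\|_{L^\infty_p}$ plus symmetric terms, together with the analogous $\mathscr Y$ bound $\|\Gamma(h,k)\|_{\mathscr Y}\lesssim\|\omega^{1/6}h\|_{L^\infty_p}\|\omega^{1/2}k\|_{L^\infty_p}$. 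The choice of weights is the point: $\sigma_{02}$ contains $\prod\omega_\ell$, which absorbs the edge singularities. With these bounds, the quadratic source decays like $\varepsilon^2\langle s\rangle^{-2/5-3/5+\delta}=\varepsilon^2\langle s\rangle^{-1+\delta}$, and the cubic one decays faster.

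The main obstacle is that $\beta=1-\delta$ falls short of $1$, so the convolution gives only $\langle t\rangle^{-b+\delta}$, losing $\delta$. I would try two remedies, in this order. First, put both factors in the $\omega^{1/2}$ norm whenever the kernel's extra $\omega$ factors allow it, which gives $\langle s\rangle^{-6/5+2\delta}$. Since the needed gain is at the edges, where $\omega^{1/2}h$ decays faster, this is plausible for the $\omega^{-1/2}$-weighted output. Second, for the $\mathscr Y$ part, I would use the bulk/edge splitting together with $a\sim\omega^{5/3}$: $\int|N|^2/a$ needs $|N|\lesssim\omega^{1/3+}$, and the collision structure should supply this from two $\omega^{1/2}$-weighted factors. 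If the quadratic term still only yields $\langle s\rangle^{-1+\delta}$, I would reserve the small loss by proving the linear estimate with $\rho$ closer to $3/5$ (say $\rho=3/5-\delta/2$) and absorbing the loss there. For $X_{1/6}$ the target rate $2/5$ is below $3/5$, which leaves room: $\int\langle t-s\rangle^{-2/5}\langle s\rangle^{-6/5}ds\lesssim\langle t\rangle^{-2/5}$.

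With these bounds the map is a contraction on the ball $\|g\|_{\mathbb T}\le 2C_\delta\varepsilon$ for $\varepsilon\le\varepsilon_\delta$. The differences $\Gamma(g,g)-\Gamma(h,h)=\Gamma(g-h,g+h)$, and similarly for $Q$, give the contraction constant $C\varepsilon$. This yields existence, uniqueness and the decay bounds \eqref{eq:nonlinear1/2}. Continuity in time in $\mathcal X$ follows from strong continuity of the linear semigroup on the weighted space, established at the end of the pointwise lemma, together with dominated convergence in $\xi$. The $L^2_{x,p}$ bound \eqref{eq:nonlinear_L2} follows from \eqref{eq:decay_L2} applied to the Duhamel formula with the same source bounds. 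The physical pointwise bounds then follow by Fourier inversion, as in the proof of Theorem~\ref{thm:linear}.
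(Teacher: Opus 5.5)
Your architecture matches the paper's: Duhamel in Fourier variables, Theorem~\ref{thm:linear} applied to the source at each time $s$, the cancellation $\Pi\Gamma=\Pi Q=0$, Young's inequality in $\xi$, and a small-data closure in the time-weighted norm. As written, however, the argument does not close. The decisive bilinear estimate is left open, and the fallback you offer cannot work. With the mixed bound you actually state, $\|\omega^{-1/2}\Gamma(h,k)\|_{L^\infty_p}\lesssim\|\omega^{1/6}h\|_{L^\infty_p}\|\omega^{1/2}k\|_{L^\infty_p}$, the quadratic source decays like $\langle s\rangle^{-\beta}$ with $\beta=b_1+b_2$. Here $b_1\le 2/5$ and $b_2\le\rho<3/5$ are your target rates for $X_{1/6}$ and $X_{1/2}$, and these caps are forced by Theorem~\ref{thm:linear}. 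The $2/5$ comes from $\sup_p\omega^{2/3}e^{-ta}$ and does not improve with $\rho$. Hence $\beta<1$ always, and then $\int_0^t\langle t-s\rangle^{-2/5}\langle s\rangle^{-\beta}\,ds\sim\langle t\rangle^{-2/5+(1-\beta)}$. This is $\lesssim\langle t\rangle^{-b_1}$ only if $b_2\ge 3/5$, which is impossible. The $\langle t\rangle^{\delta}$ loss is therefore self-referential. Taking $\rho=3/5-\delta/2$ still leaves $X_{1/6}$ at $\langle t\rangle^{-2/5+\delta}$ and $X_{1/2}$ at $\langle t\rangle^{-3/5+3\delta/2}$, and no choice of exponents rescues the mixed bound.

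What you need is your first remedy, which is exactly the paper's a priori bound \eqref{eq:Gamma-pointwise}: $\|\omega^{-1/2}\Gamma(f,h)\|_{L^\infty_p}\lesssim\|\omega^{1/2}f\|_{L^\infty_p}\|\omega^{1/2}h\|_{L^\infty_p}$. It gives $X_{-1/2,r}[\Gamma(g,g)]\lesssim X_{1/2,1}[g]X_{1/2,r}[g]\lesssim\langle s\rangle^{-6/5+2\delta}$, and $6/5-2\delta>1$ is precisely where $\delta<1/10$ is used. You only call this bound plausible, but it is the heart of the nonlinear step and needs an argument. The paper proceeds as follows.
\begin{itemize}
\item From $F_+(p_0,p_2)\ge 4\omega_0\omega_2$ and $\omega_1\omega_2\omega_3\le 4\omega_0$ it gets $\omega_0^{-1/2}\sigma_{02}\le\tfrac12\omega_1\omega_2^{1/2}\omega_3$.
\item It then divides by the input weights pair by pair. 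Pairs not containing $0$ are trivially bounded, and the pairs $(0,1)$, $(0,3)$ are controlled through $(\omega_1\omega_2\omega_3/\omega_0)^{1/2}$.
\item The $g_0g_2$ term is the delicate one, since the leftover factor $\omega_0^{-1/2}\omega_1\omega_3$ is not pointwise bounded (it is of size $\omega_0^{-1/2}$ when $\omega_2=\omega_0$). There one keeps the coefficient $\mathfrak f_3-\mathfrak f_1=\mathcal O(|\omega_3-\omega_1|)=\mathcal O(|\omega_0-\omega_2|)$ from the energy resonance. The cases $\omega_2\le\omega_0$ and $\omega_2\ge\omega_0$ then give a factor $\lesssim\omega_0^{1/2}$.
\end{itemize}
With this bound in hand, the $\mathscr Y$ part needs no separate edge analysis. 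Indeed $\|N\|_{\mathscr Y}\lesssim\|\omega^{-1/2}N\|_{L^\infty_p}$ because $\int\omega/a\,dp<\infty$, and $L^2_\xi\ast L^2_\xi\subset L^\infty_\xi$ gives $\|\widehat{\Gamma(g,g)}\|_{L^\infty_\xi\mathscr Y}\lesssim X_{1/2,2}[g]^2$. The cubic term, with all three inputs in $\omega^{1/6}L^\infty_p$, decays like $\langle s\rangle^{-6/5}$.

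A smaller point: a global contraction on the small ball gives uniqueness only within that ball. For uniqueness in $C([0,\infty);\mathcal X)$ you also need the local theory in $\mathcal X$, namely boundedness of $K$ on $\mathcal X$ and local Lipschitz bounds on the nonlinearity. The paper proves this and combines it with a continuity argument instead of a global fixed point.
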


Before we proceed with the proof, we  recall that $g_j=g(p_j)$ and $\mathfrak{f}_j=\mathfrak{f} (p_j)$, where
$p_1,p_3$ are determined by the nontrivial branch in
\eqref{eq:parametrisation_nontrivial_branch}. The integral  kernel is 
$$ \sigma_{02} =\frac{\omega_0\omega_1\omega_2\omega_3} {\sqrt{F_+(p_0,p_2)}}, 
$$
and the nonlinearities are 
\begin{equation} \label{eq:nonlinearities_Gamma_Q}
\begin{split}
& \Gamma(g,g)(p_0) 
 = \frac{1}{\mathfrak{f}_0}
 \int_{\mathbb{T} }\sigma_{02}
 \big[ 
\mathfrak{f}_2\mathfrak{f}_3(
\mathfrak{f}_0 + \mathfrak{f}_1) g_2g_3
-\mathfrak{f}_0\mathfrak{f}_1
(\mathfrak{f}_2+\mathfrak{f}_3) g_0g_1 \\
 &\hspace{3.4cm}
   +\mathfrak{f}_0\mathfrak{f}_2(\mathfrak{f}_3-\mathfrak{f}_1)g_0g_2
   +\mathfrak{f}_0\mathfrak{f}_3(\mathfrak{f}_2-\mathfrak{f}_1)g_0g_3\\
 &\hspace{3.8cm}
   +\mathfrak{f}_1\mathfrak{f}_2(\mathfrak{f}_3-\mathfrak{f}_0)g_1g_2
   +\mathfrak{f}_1\mathfrak{f}_3(\mathfrak{f}_2-\mathfrak{f}_0)g_1g_3
 \big] dp_2,\\
 & Q(g,g,g)(p_0)
 = \frac{1}{\mathfrak{f}_0}
 \int_{\mathbb{T}}\sigma_{02}
 \big[
   \mathfrak{f}_1\mathfrak{f}_2\mathfrak{f}_3 g_1g_2g_3
+\mathfrak{f}_0\mathfrak{f}_2\mathfrak{f}_3g_0g_2g_3\\
 &\hspace{4cm}
 -\mathfrak{f}_0\mathfrak{f}_1\mathfrak{f}_3g_0g_1g_3 -\mathfrak{f}_0\mathfrak{f}_1\mathfrak{f}_2g_0g_1g_2
 \big] dp_2.
 \end{split}
\end{equation}
As we are going to take the spatial Fourier transform of these nonlinearities, we will need to understand $\Gamma, Q$ for distinct inputs\footnote{This is since for the quadratic nonlinearity for example it holds  $\widehat{\Gamma(g,g)}(\xi)
=(2\pi)^{-1} \int_{\mathbb{R}}
\Gamma\big(\widehat{g}(\eta),
     \widehat{g}(\xi-\eta)\big) d\eta$.}. 
For distinct inputs $f^1,f^2,f^3$, we understand $\Gamma$ and $Q$ as the symmetric bilinear and trilinear forms associated with these expressions.  In that case, in $\Gamma$ each product $g_jg_k$ in \eqref{eq:nonlinearities_Gamma_Q}  is replaced by the symmetrisation 
$(f_j^{1}f_k^{2}+f_j^{2} f_k^{1})/2$. In $Q$ similarly each cubic product is averaged over
the six permutations of its three inputs.  Also since $\widehat{g}(\xi, p)$ is generally complex-valued let us also note that we use the same formulas for complex-valued inputs. 

We have the following a-priori estimates:  
 \begin{lemma}[A priori bounds]
There exist finite constants $C_1, C_2 >0$ so that 
 \begin{align}
 & \|\omega^{-1/2}\Gamma(f,h)\|_{L^\infty_p}
 \leq C_1\|\omega^{1/2} f\|_\infty\|\omega^{1/2}h\|_{L^\infty_p},
\label{eq:Gamma-pointwise}\\
 & \|\omega^{-1/2}Q(f_1,f_2,f_3)\|_{L^\infty_p} \leq C_2 \prod_{j=1}^3\|\omega^{1/6}f_j \|_{L^\infty_p}.
\label{eq:pointwise_apriori}
\end{align}
\end{lemma}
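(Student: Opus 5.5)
The plan is to bound each monomial in \eqref{eq:nonlinearities_Gamma_Q} pointwise and integrate over $p_2$ along the nontrivial resonant branch. The weights do the bookkeeping: each input is replaced by $|f_j|\le \omega_j^{-\mu}\|\omega^\mu f\|_{L^\infty_p}$, and the kernel $\sigma_{02}$ supplies the factor $\omega_0\omega_1\omega_2\omega_3$. Since $\mathfrak f$ is bounded above and below on $\mathbb T$ (as $\gamma>0$), the prefactors $\mathfrak f_0^{-1}$ and the products of $\mathfrak f_j$ are harmless constants. The task therefore reduces to showing
\[
\omega_0^{-1/2}\int_{\mathbb T}\frac{\omega_0\omega_1\omega_2\omega_3}{\sqrt{F_+(p_0,p_2)}}\,\omega_j^{-\mu}\omega_k^{-\mu}\,(\omega_l^{-\mu})\,dp_2\lesssim 1
\]
uniformly in $p_0$, for each pair $(j,k)$ appearing in $\Gamma$ (with $\mu=1/2$) and each triple appearing in $Q$ (with $\mu=1/6$).

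For the quadratic term, each product $g_jg_k$ cancels at most two of the four factors $\omega_\ell$ down to $\omega^{1/2}$. For example, $g_2g_3$ leaves $\omega_0\omega_1\omega_2^{1/2}\omega_3^{1/2}$. After dividing by $\omega_0^{1/2}$, at least $\omega_0^{1/2}$ and the remaining weights stay nonnegative powers, all bounded by $1$. It therefore suffices to know
\[
\sup_{p_0}\int_{\mathbb T}F_+(p_0,p_2)^{-1/2}\,dp_2<\infty,
\]
or a slightly weighted version of it. The only singularity of $F_+^{-1/2}$ is near $(p_0,p_2)=(0,2\pi)$ and $(2\pi,0)$, where $F_+\approx(\epsilon^2-s^2)^2+\epsilon s$ in the local variables $\epsilon=\omega_0$, $s=\omega_2$. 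This is the same computation as \cite[Lemma 8]{GLM26} already quoted above. Near that point,
\[
\int (\epsilon s)^{-1/2}\,ds\lesssim \epsilon^{-1/2}.
\]
The factor $\omega_0^{1/2}$ that survives in every monomial of $\Gamma$ then cancels this $\epsilon^{-1/2}$. The terms with $g_0$ present, such as $g_0g_1$, leave $\omega_0^{1/2}$ before the division, hence $\omega_0^0$ after it. For these I would use instead the extra factor $\omega_2^{1/2}$ or $\omega_3^{1/2}$, which yields the integrable bound $\int s^{1/2}(\epsilon s)^{-1/2}\,ds$. They also carry $\mathfrak f$-differences such as $\mathfrak f_3-\mathfrak f_1$, which are $O(|\omega_3-\omega_1|)$ and may be used if needed.

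For the cubic term with $\mu=1/6$, three factors become $\omega^{5/6}$, and the fourth, untouched factor contributes a full power of $\omega$. Checking the four monomials: $g_1g_2g_3$ leaves $\omega_0\cdot\omega_0^{-1/2}=\omega_0^{1/2}$. The other three monomials contain $g_0$, so $\omega_0^{5/6-1/2}=\omega_0^{1/3}$ survives together with $\omega_2^{5/6}$ or a full $\omega_2$. This gives ample room against the $(\epsilon s)^{-1/2}$ singularity. Away from the singular corners, $F_+$ is bounded below and everything is bounded.

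The main obstacle is making the local analysis of $\sigma_{02}$ precise near the corners. This requires tracking how $\omega_1$ and $\omega_3$ behave along the branch $h(p_0,p_2)$ when $p_0\to0$ and $p_2\to2\pi$, so as to ensure they are not small in a way that could hurt. Since they appear only with nonnegative powers, I expect upper bounds $\omega_1,\omega_3\le1$ to suffice. The delicate case is then only the $g_0$-containing quadratic terms, where one must extract $\omega_2^{1/2}$ (or $\omega_3^{1/2}$) in the regime $s\lesssim\epsilon$. I would handle this by splitting at $s=\epsilon$, as in the proof of Lemma \ref{lem:kernel_splitting_linear}.
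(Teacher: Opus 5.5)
You have a genuine gap, and it sits exactly where you expect the crude bounds $\omega_1,\omega_3\le 1$ to suffice.

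Take the monomial $g_0g_1$ in $\Gamma$. Its coefficient is $\mathfrak f_0\mathfrak f_1(\mathfrak f_2+\mathfrak f_3)$, which is a sum, so no $\mathfrak f$-difference is available. After inserting the weights, the integrand is $\omega_1^{1/2}\omega_2\omega_3F_+^{-1/2}$. With $\omega_1,\omega_3\le 1$ you are left with $\int\omega_2F_+^{-1/2}\,dp_2$.

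Near the corner, for $\epsilon^{1/3}\le s\le s_0$, one has $F_+\le 5s^4$. So this integral is at least of order $\int_{\epsilon^{1/3}}^{s_0}s^{-1}\,ds\sim\log(1/\epsilon)$, which is not uniform in $p_0$.

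The computation you propose does not repair this. The integral $\int s^{1/2}(\epsilon s)^{-1/2}\,ds=\epsilon^{-1/2}\int ds$ also blows up as $\epsilon\to0$. Moreover, the dangerous regime is $s\gtrsim\epsilon^{1/3}$, not $s\lesssim\epsilon$, so splitting at $s=\epsilon$ is aimed at the wrong place.

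The monomial $g_0g_3$ fails in the same way, since $|\mathfrak f_2-\mathfrak f_1|\lesssim|\omega_2-\omega_1|$ is not small under crude bounds. Incidentally, $\sup_{p_0}\int F_+^{-1/2}\,dp_2$ is infinite, since the integral grows like $\omega_0^{-1/3}$; only weighted versions can be used.

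What is missing is information on $\omega_1,\omega_3$ along the resonant branch. There one has $\omega_1-\omega_3=\omega_2-\omega_0$ and $\omega_1\omega_2\omega_3\le 4\omega_0$ (\cite[Lemmas 17 and 19]{GLM26}). Hence for $s\ge 2\epsilon$ one gets $\omega_1\ge s-\epsilon$ and $\omega_3\lesssim\epsilon/s^2$, and it is this smallness of $\omega_3$ that kills the logarithm.

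The paper's proof rests on exactly this product inequality, together with the pointwise bound $F_+\ge 4\omega_0\omega_2$. The latter gives $\omega_0^{-1/2}\sigma_{02}\le\frac{1}{2}\omega_1\omega_2^{1/2}\omega_3$. Combined with the product inequality, every monomial then has a pointwise bounded integrand. For the pair $(0,1)$ it is $(\omega_1\omega_2\omega_3/\omega_0)^{1/2}\omega_3^{1/2}$. For the pair $(0,2)$, after using $|\mathfrak f_3-\mathfrak f_1|\lesssim|\omega_0-\omega_2|$, it is $\omega_0^{-1/2}\omega_1\omega_3|\omega_0-\omega_2|\lesssim\omega_0^{1/2}$. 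No corner integration is needed at all.

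Your cubic estimates, by contrast, can be salvaged within your framework, but only if you use the full local model $F_+\sim(s^2-\epsilon^2)^2+\epsilon s$. Against $(\epsilon s)^{-1/2}$ alone, the monomial $g_0g_2g_3$ gives $\omega_0^{1/3}\int\omega_2^{5/6}(\omega_0\omega_2)^{-1/2}\,dp_2\sim\omega_0^{-1/6}$. So the claimed ``ample room'' is not there without the quartic part of $F_+$ or the product bound.
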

\begin{proof} 
To prove these estimates, use the bounds
$  F_+(p_0,p_2)\geq 4\omega_0\omega_2,$,  and $
    \omega_1\omega_2\omega_3\leq 4\omega_0$. The first follows directly from the definition of $F_+$. The second we get from \cite[Lemmas 17 and 19]{GLM26}. These imply that 
\begin{equation}
\omega_0^{-1/2} \sigma_{02}
 \leq \tfrac{1}{2} \omega_1\omega_2^{1/2} \omega_3.
 \label{eq:nonlinear-kernel-bound}
\end{equation}

Now for the quadratic term, in order to conclude we need to estimate all the integrands of the form $\omega_0^{-1/2} \sigma_{02} \omega_j^{-1/2}\omega_k^{-1/2}$ 
for all $j, k \in \{0,1,2,3\}$, $j \neq k$.  
So we divide by the input weights
$\omega_j^{1/2}\omega_k^{1/2}$ for each pair $(j,k)$. Observe that for pairs not containing $0$, all the remaining powers in
\eqref{eq:nonlinear-kernel-bound} are nonnegative and thus bounded. 
For the pairs $(0,1)$ and $(0,3)$, the remaining factors are
$$ \left(\frac{\omega_1\omega_2\omega_3}{\omega_0}\right)^{1/2}
 \omega_3^{1/2}, \quad
    \left(\frac{\omega_1\omega_2\omega_3}{\omega_0}\right)^{1/2}  \omega_1^{1/2},
$$
respectively, which are both bounded. For $(0,2)$, we keep the coefficient difference in the integrand $|\mathfrak{f}_3-\mathfrak{f}_1|$ 
in \eqref{eq:nonlinearities_Gamma_Q}. This helps since 
$$|\mathfrak{f}_3-\mathfrak{f}_1|
\lesssim |\omega_3 - \omega_1| =
|\omega_0 -\omega_2|, 
$$ 
due to the resonance condition.  
Thus the remaining factor is bounded (up to constants) by
$$ \omega_0^{-1/2}\omega_1 \omega_3 
|\omega_0 - \omega_2| \lesssim \omega_0^{1/2}.
$$
where for the second inequality: If $\omega_2 \leq \omega_0$, we just use that $\omega_1 \omega_3\leq 1$, while when $\omega_2 \geq \omega_0$, we use the product bound: $\omega_1 \omega_3\leq 4\omega_0/\omega_2$. Integration in $p_2$ and taking the supremum over $p_0$ gives the claimed bound \eqref{eq:Gamma-pointwise}. 

Next for the cubic nonlinearity, its three inverse input weights are bounded
by $\prod_{\ell = 0}^3\omega_\ell^{-1/6}$, since the additional factor is
at least one. The remaining factor satisfies
\begin{align*} \omega_0^{-1/6}\omega_1^{5/6}\omega_2^{1/3}\omega_3^{5/6} &= \left(\frac{\omega_1\omega_2\omega_3}{\omega_0}\right)^{1/6}
\omega_1^{2/3}\omega_2^{1/6}\omega_3^{2/3}
\lesssim 1.
\end{align*}
This proves \eqref{eq:pointwise_apriori}. 
\end{proof}

We also record the property, as also in the classical Boltzmann equation: 
\begin{equation} \label{eq:proj_nonlinearities_zero}
\Pi\Gamma(f,h)=0,\qquad
    \Pi Q(f_1,f_2,f_3) =0.
\end{equation}
For this, first conservation of mass and energy gives that for a bounded function $F$, the full
collision operator satisfies
$$
\int_{\mathbb{T}}
\begin{bmatrix}1\\ \omega \end{bmatrix}\mathcal C[F](p) dp=0.
$$

For a bounded real $h$, whenever $F= \mathfrak{f} (1 + \lambda h)$ and $\phi =\mathfrak{f}$ or $\omega\mathfrak{f}$, we get 
\begin{align*}
 0 = \left\langle
\mathfrak{f}^{-1}\mathcal{C}[\mathfrak{f} (1+\lambda h)],\phi
    \right\rangle = \lambda\langle L h, \phi\rangle
     +\lambda^2 \langle\Gamma(h,h),\phi\rangle
     +\lambda^3 \langle Q(h,h,h),\phi\rangle.
\end{align*}

Since this holds for every sufficiently small $\lambda$, each coefficient has to be $0$. We conclude then \eqref{eq:proj_nonlinearities_zero} since these two choices of $\phi$ span $\operatorname{ker} L$. This works for the diagonal case, i.e. for the same function $h$. For distinct entries, for example in the quadratic form, we may replace $h$ by $\lambda_1 f+\lambda_2 h$ in \eqref{eq:proj_nonlinearities_zero}, and compare the coefficients of $\lambda_1\lambda_2$. Thanks to symmetry, this coefficient is $2 \Pi \Gamma(f,h)$ and so we may conclude that $\Pi \Gamma(f,h)=0$.  
Similarly with the cubic form. 

Analogues of \eqref{eq:proj_nonlinearities_zero} hold
for the weighted inputs of \eqref{eq:Gamma-pointwise} and \eqref{eq:pointwise_apriori}: We only need to be careful as the weighted bounds allow the inputs to be unbounded near the edges $0,2\pi$. Thus we truncate each $f$ by $f^{(M)}=f  
\mathbf{1}_{ \{|f|\leq M\} }$, so that they are bounded and converge pointwise to $f$. 
The estimates just proved in the proof of \eqref{eq:Gamma-pointwise}, 
\eqref{eq:pointwise_apriori}, dominate the integrands uniformly in these truncation $M$: they are bounded by $\omega_0^{1/2}$.   
Thus dominated convergence applies and yields convergence for the $p_2$-integral where the limit is bounded by  $C\omega_0^{1/2}$, with $C$ uniform in $M$. Then dominated convergence applies again and yields convergence in $L^2_p$. 
The projection $\Pi$ is bounded on $L^2_p$, and so the cancellation identities  \eqref{eq:proj_nonlinearities_zero} pass to the truncation limit, indeed.    

 Note that this is a property of the collision operator at each fixed $x$.
It does not assert that $\Pi g(t,x,\cdot)=0$.
In fact the local moments satisfy 
$$ \partial_t\langle g,\phi\rangle_p
 +\partial_x\langle vg,\phi\rangle_p=0,
 \qquad \phi\in\ker L,
$$
so the transport term can change them!  

We continue with a local-in-time well-posedness result on the space $\mathcal{X}$. 

\begin{lemma}[Local well-posedness in $ \mathcal{X}$]
For every $g_0 \in \mathcal{X}$, equation \eqref{eq:nonlinear_eq} 
has a unique maximal mild solution
$g\in C([0,T_0);\mathcal{X})$. If $T_0<\infty$, then
\begin{equation}
\limsup_{t \uparrow T_0}
\|g(t)\|_{\mathcal{X}}=\infty.
    \label{eq:NL_blowup criterion}
\end{equation}
\end{lemma}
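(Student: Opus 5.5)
The plan is a standard Banach fixed point for the Duhamel formulation, carried out entirely on the Fourier side. Writing $\widehat{g}(t,\xi)=e^{tB(\xi)}\widehat{g}_0(\xi)+\int_0^t e^{(t-s)B(\xi)}\widehat{N}(s,\xi)\,ds$ with $N=\Gamma(g,g)+Q(g,g,g)$, we need three ingredients:
\begin{itemize}
\item[(a)] local boundedness of the semigroup on $\mathcal{X}$;
\item[(b)] boundedness of the nonlinearities from $\mathcal{X}$ into a space on which the semigroup acts boundedly;
\item[(c)] the contraction argument on $C([0,T];\mathcal{X})$ for $T$ depending only on $\|g_0\|_{\mathcal X}$, followed by the usual continuation argument giving \eqref{eq:NL_blowup criterion}.
\end{itemize}

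\textbf{Step (a).} For (a) I would reuse the observation at the end of the weighted pointwise decay lemma. By Lemma \ref{lem:kernel_splitting_linear} with $\mu=1/6$, together with $\|\mathbf 1_{\mathcal B}h\|_{L^c_p}\lesssim r_0^{-1/6}\|\omega^{1/6}h\|_{L^\infty_p}$, the operator $K$ is bounded on $\omega^{-1/6}L^\infty_p$ with a constant uniform in $\xi$. The multiplier $e^{-t(a+i\xi v)}$ has modulus at most $1$. Running Duhamel and Gr\"onwall for each fixed $\xi$ then gives
$$\|\omega^{1/6}e^{tB(\xi)}f\|_{L^\infty_p}\le e^{Ct}\|\omega^{1/6}f\|_{L^\infty_p},$$
with $C$ independent of $\xi$. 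Taking $L^1_\xi$ and $L^2_\xi$ norms, $\|S(t)h\|_{\mathcal X}\le e^{Ct}\|h\|_{\mathcal X}$. No orthogonality is needed here.

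\textbf{Step (b).} By \eqref{eq:Gamma-pointwise}, \eqref{eq:pointwise_apriori} and $\omega\le1$, both $\omega^{1/6}\Gamma$ and $\omega^{1/6}Q$ are pointwise bounded by products of $\|\omega^{1/6}\cdot\|_{L^\infty_p}$ norms of the inputs. For $\Gamma$, use $\omega^{1/2}\le\omega^{1/6}$ on the inputs and $\omega^{1/6}\le\omega^{-1/2}$ on the output. These bounds hold for the symmetrised complex multilinear forms by polarisation, since the proofs bound the integrands in absolute value.

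On the Fourier side, $\widehat{\Gamma(g,g)}(\xi)=(2\pi)^{-1}\int\Gamma(\widehat g(\eta),\widehat g(\xi-\eta))\,d\eta$, and similarly for $Q$ as a double convolution. With $\phi(\xi):=\|\omega^{1/6}\widehat g(\xi)\|_{L^\infty_p}$ this gives
$$\|\omega^{1/6}\widehat N(\xi)\|_{L^\infty_p}\lesssim (\phi*\phi)(\xi)+(\phi*\phi*\phi)(\xi).$$
Young's inequality in $L^1_\xi$, and $L^1*L^2\subset L^2$ in $L^2_\xi$, then yield
$$\|N(g)\|_{\mathcal X}\lesssim\|g\|_{\mathcal X}^2+\|g\|_{\mathcal X}^3.$$
The same bilinear and trilinear structure gives the difference bound
$$\|N(g)-N(h)\|_{\mathcal X}\lesssim(\|g\|_{\mathcal X}+\|h\|_{\mathcal X}+\|g\|_{\mathcal X}^2+\|h\|_{\mathcal X}^2)\,\|g-h\|_{\mathcal X}.$$

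\textbf{Step (c).} With (a) and (b), the map $\Phi(g)(t)=S(t)g_0+\int_0^tS(t-s)N(g(s))\,ds$ sends the ball of radius $2\|g_0\|_{\mathcal X}$ in $L^\infty([0,T];\mathcal X)$ into itself and is a contraction there, for $T\sim\min\{1,(\|g_0\|_{\mathcal X}+\|g_0\|_{\mathcal X}^2)^{-1}\}$. Uniqueness in $C([0,T];\mathcal X)$ follows from the same difference estimate and Gr\"onwall. Since the existence time depends only on the size of the data, gluing solutions gives a unique maximal solution, and if $\|g(t)\|_{\mathcal X}$ stayed bounded near a finite $T_0$ we could extend past it, which proves \eqref{eq:NL_blowup criterion}.

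\textbf{Main obstacle.} The step needing most care is continuity in time with values in $\mathcal X$, since $S(t)$ is not obviously strongly continuous on the $L^\infty_p$-based space. I would first check that $t\mapsto e^{tB(\xi)}f$ is norm-continuous in $\omega^{-1/6}L^\infty_p$ for each fixed $\xi$. The generator $-a-i\xi v+K$ is a bounded operator there (for fixed $\xi$), so the semigroup is uniformly continuous. Continuity in $\mathcal X$ then follows by dominated convergence in $\xi$, using the uniform bound $e^{Ct}\phi(\xi)$ from (a). The Duhamel integral is continuous by the same argument together with the bounds from (b).
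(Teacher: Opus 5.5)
Your proposal is correct and follows essentially the same route as the paper. Both arguments rest on boundedness of $K$ on $\omega^{-1/6}L^\infty_p$ via Lemma \ref{lem:kernel_splitting_linear}, the Young-type bound $\|\Gamma(g,g)+Q(g,g,g)\|_{\mathcal X}\lesssim\|g\|_{\mathcal X}^2+\|g\|_{\mathcal X}^3$ coming from \eqref{eq:Gamma-pointwise}--\eqref{eq:pointwise_apriori}, and a contraction on a ball in $C([0,T];\mathcal X)$ followed by continuation. The only difference is cosmetic: you get $\|S(t)\|_{\mathcal X\to\mathcal X}\le e^{Ct}$ and strong continuity fibrewise in $\xi$ (bounded generator, Gr\"onwall, then dominated convergence in $\xi$), whereas the paper uses strong continuity of the transport--loss semigroup on $\mathcal X$ together with the bounded perturbation theorem.
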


\begin{proof}
Consider the semigroup 
$G(t)f(x,p)=e^{-ta(p)}f(x-tv(p),p)$ which consists of the transport and the loss term of the collisional operator.  
This is a strongly continuous contraction 
on $\mathcal{X}$:   
Indeed, since $\sup_p |e^{-t(a + i\xi v)}-1| \to 0 $ as $ t\downarrow 0$, for
each fixed $\xi$.  
Dominated convergence in $L^1_\xi$ and $L^2_\xi$ proves then strong
continuity. 

Continuity of the semigroup persists under $K$ since from Lemma \ref{lem:kernel_splitting_linear} we have that $K$ is bounded on $\mathcal{X}$: A direct application of the Lemma gives us for any $\eta>0$, there exists a $r_0>0$ so that 
  $$|\omega^\mu K h| \leq a(p) \Big( \eta \|\omega^\mu h\|_{L^\infty_p} + C \|\mathbf{1}_{\mathcal{B}_{r_0}} h  \|_{L^c_p} \Big),$$
  where $c>2$ and $\mu = 1/6$.  
   Since $a$ is bounded and $$\|\mathbf{1}_{\mathcal{B}_{r_0}} h \|_{L^c_p} \lesssim r_0^{-\mu} \|\omega^{\mu} h \|_{L^\infty_p}, 
  $$
  we have $$\|\omega^{\mu} K  h \|_{L^\infty_p} \lesssim \|\omega^{\mu} h \|_{L^\infty_p}$$ or that $K$
 is bounded on $\omega^{-1/6}L^\infty_p$ and thus on 
$\mathcal{X}$ after taking the $L^1_\xi$ and $L^2_\xi$ norms, implying $ \|Kh\|_{\mathcal{X}} \lesssim  \|h\|_{\mathcal{X}}$.   

 The bounded perturbation theorem therefore gives a
strongly continuous semigroup on $\mathcal{X}$  for the full linear operator $- v\partial_x + L$.
Since $\mathcal{X}$ embeds continuously into $L^2_{x,p}$, uniqueness of the linear equation in $L^2_{x,p}$ identifies this semigroup with $S(t)$ from Theorem \ref{thm:linear}. In particular, 
$$ \| S(t) \|_{\mathcal{X} \to \mathcal{X}} \leq e^{ \| K \|_{\mathcal{X} \to \mathcal{X}}t}.
$$
Next, we are going to control the nonlinearities on $\mathcal{X}$: 
$$
\| \Gamma(g,g)+Q(g,g,g) \|_\mathcal{X} \lesssim \|g\|_\mathcal{X}^2 + \|g\|_\mathcal{X}^3. 
$$
Since $\omega \leq 1$, we have the set of inequalities: 
$X_{1/2,r}[\cdot] \leq X_{1/6,r}[ \cdot ]\leq X_{-1/2,r}[ \cdot ]$, which will allow us to use our a-priori estimates \eqref{eq:Gamma-pointwise} and \eqref{eq:pointwise_apriori}.  

Denoting by  
$x_\mu(\xi)  =  \|\omega^\mu \widehat{g}(\xi)\|_{L^\infty_p}$, we take the $x$-Fourier transform of the collision operators, which are convolutions. 
Then the pointwise estimates in \eqref{eq:Gamma-pointwise} and \eqref{eq:pointwise_apriori} give
\begin{align*}
\|\omega^{-1/2}\widehat{\Gamma(g,g)}(\xi)\|_{L^\infty_p} 
       &\lesssim (x_{1/2} \ast x_{1/2})(\xi),\\
    \|\omega^{-1/2}\widehat{Q(g,g,g)}(\xi)\|_{L^\infty_p}
       &\lesssim(x_{1/6} \ast x_{1/6} \ast x_{1/6})(\xi).
\end{align*}
 
By Young's inequality for convolutions, for $r=1,2$, we get 
\begin{align} \label{eq:Fourier_estimate1}
 X_{-1/2,r}[\Gamma(g,g)]
&\lesssim X_{1/2,1}[g]X_{1/2,r}[g], \quad 
    \text{ and }\quad 
X_{-1/2,r}[Q(g,g,g)]
    \lesssim X_{1/6,1}[g]^2X_{1/6,r}[g]. 
\end{align}

By the triangle inequality now these  estimates give
\begin{align*}
&\|\Gamma(g,g)+Q(g,g,g)\|_{\mathcal{X}} \leq \sum_{r=1}^2 \left( X_{-1/2,r}[\Gamma(g,g)] + X_{-1/2,r}[Q(g,g,g)] \right)\\
 & \hspace{2cm} \lesssim X_{1/2,1}[g]
\left(X_{1/2,1}[g]+X_{1/2,2}[g]\right) +
  X_{1/6,1}[g]^2
       \left(X_{1/6,1}[g] + X_{1/6,2}[g]\right)\\
    &\hspace{2cm} \lesssim
    \|g\|_{\mathcal{X}}^2 + \|g\|_{\mathcal{X}}^3,
\end{align*}
where we used $X_{1/2,r}[g]\leq X_{1/6,r}[g]$ and
$\|g\|_{\mathcal{X}} = X_{1/6,1}[g]+X_{1/6,2}[g]$.

Multilinearity also gives, whenever
$\|g\|_{\mathcal X},\|h\|_{\mathcal X}\leq R$,

\begin{align*}
\| [\Gamma(g,g)-\Gamma(h,h)] 
  + [ Q(g,g,g)-Q(h,h,h)] \|_{\mathcal{X}} \lesssim(R+R^2)\|g-h\|_{\mathcal{X}}.
\end{align*}

We then close with a typical fixed-point argument:  
we construct the mild solution $g\in C([0,T];\mathcal{X})$ that satisfies 
\begin{equation} \label{eq:mildsolution_NL}
g(t)=S(t)g_0 +\int_0^t S(t-s)
\big[
\Gamma(g(s),g(s)) +
Q(g(s),g(s),g(s))\big]
ds.
\end{equation} 
 which is a fixed point of the mapping 
$$ \Phi[g](t):=S(t)g_0 + \int_0^t 
S(t-s)
 [\Gamma(g(s),g(s))+Q(g(s),g(s),g(s))] ds.
$$ 
Set $R = 2\|g_0\|_{\mathcal{X}}$. On the closed ball
$\left\{  g\in C([0,T];\mathcal{X}):
 \sup_{0\leq t\leq T}\|g(t)\|_{\mathcal{X}}
 \leq R \right\},
$ 
we have
$$ \sup_{0\leq t \leq T}
\| \Phi[g](t)\|_{\mathcal{X}}
\leq e^{\|K\|_{\mathcal{X} \to \mathcal{X}}  T}\|g_0\|_{\mathcal{X}} + C T e^{ \|K\|_{\mathcal{X}\to \mathcal{X} }  T}(R^2+R^3),
$$
and
\begin{align*}
\sup_{0\leq t\leq T}  \|\Phi[g](t)-\Phi[h](t)\|_{\mathcal{X}}
    \leq C T e^{ \|K\|_{\mathcal{X}\to \mathcal{X} }T } (R+R^2)
\sup_{0\leq t\leq T} \|g(t)-h(t)\|_{\mathcal{X}}.
\end{align*}
Choose $T>0$ sufficiently small that,  $\Phi$ maps the ball into itself and is a contraction. 
Its fixed point is our unique local solution. 
The construction can be restarted at any time, and thus it gives a unique maximal solution on 
$[0,T_0)$. If the maximal time $T_0 < \infty$ and the norm is  bounded near $T_0$, restarting sufficiently close to $T_0$
would extend the solution beyond $T_0$.
\end{proof}

\begin{proof}[Proof of Theorem \ref{thm:nonlinear_stability}]
For $T<T_0$ we define the weighted norm 
$$ 
\|g\|_{\mathcal{B}_T} :=\sum_{r=1,2} \sup_{0\leq t\leq T} \left[ 
\langle t \rangle^{2/5} X_{1/6,r}[g(t)] + 
\langle t \rangle^{3/5-\delta}X_{1/2,r}[g(t)]
 \right].
$$
This is finite and continuous in $T$. The Fourier estimates \eqref{eq:Fourier_estimate1} together with the estimate
\begin{align*}
\|g\|_{\mathscr{Y}}
\leq 
\left(\int_{\mathbb{T}} \frac{\omega}{a} dp\right)^{1/2}
   \|\omega^{-1/2}g \|_{L^\infty_p}
\lesssim\|\omega^{-1/2}g\|_{L^\infty_p},
\end{align*}
because $a \sim \omega^{5/3}$,  as well as using the convolution estimate $ \|f^1 \ast f^2 \|_{L^\infty_\xi} \leq \|f^1\|_{L^2_\xi}\|f^2\|_{L^2_\xi},$ (which follows from Cauchy-Schwarz), we obtain: 
\begin{align*}
\|\widehat{\Gamma(g,g)}\|_{L^\infty_\xi\mathscr{Y}}
&\lesssim X_{1/2,2}[g]^2, \quad 
    \text{ and } \quad 
\|\widehat{Q(g,g,g)}\|_{L^\infty_\xi\mathscr{Y}} \lesssim X_{1/6,1}[g]X_{1/6,2}[g]^2. 
\end{align*} 

These give, for $s \leq T$, 
\begin{align*}
X_{-1/2,r}[\Gamma(g,g)] + \|\widehat{\Gamma(g,g)}\|_{L^\infty_\xi\mathscr{Y}} \lesssim \|g\|_{\mathcal{B}_T}^2 \langle s\rangle^{-6/5+2\delta},\quad 
X_{-1/2,r}[Q(g,g,g)] + \|\widehat{Q(g,g,g)}\|_{L^\infty_\xi\mathscr{Y}} &\lesssim\|g\|_{\mathcal B_T}^3 \langle s\rangle^{-6/5}, 
x\end{align*}
which allows us now to apply our linear decay estimates.  
Indeed, by assumption: $\Pi g_0(x, \cdot)=0$ for almost every $x$, and also 
\eqref{eq:proj_nonlinearities_zero} gives $\Pi\Gamma(g(s),g(s))=0$ and $\Pi Q(g(s),g(s),g(s))=0$ at every time $s$, and a.e. fixed $x$, by the discussion above. 

Applying  the linear Theorem \ref{thm:linear} to the Duhamel formula  \eqref{eq:mildsolution_NL} 
and using Minkowski's
inequality, for $(\mu,b)=(1/6,2/5)$ or
$(\mu,b)=(1/2,3/5-\delta)$, gives
\begin{align*}
 X_{\mu,r}[g(t)]
    \lesssim_\delta &
 \varepsilon\langle t\rangle^{-b} + \|g\|_{\mathcal B_T}^2
 \int_0^t\langle t-s\rangle^{-b}
   \langle s\rangle^{-6/5+2\delta} ds + \|g\|_{\mathcal B_T}^3
 \int_0^t\langle t-s\rangle^{-b}
  \langle s\rangle^{-6/5} ds.
\end{align*}
For $0<b<1<q$, splitting at $t/2$ gives
\begin{align*}
 \int_0^t\langle t-s\rangle^{-b}\langle s\rangle^{-q} ds  &\lesssim \langle t\rangle^{-b}\int_0^{t/2}\langle s\rangle^{-q} ds
  +\langle t\rangle^{-q}\int_0^{t/2}\langle s\rangle^{-b} ds \lesssim_{b,q}\langle t\rangle^{-b}.
\end{align*}

Therefore
\begin{equation}\label{eq:nonlinear-bootstrap}
\|g\|_{\mathcal{B}_T}
 \leq C_\delta\big[
\varepsilon+\|g\|_{\mathcal{B}_T}^2 
+\|g\|_{\mathcal{B}_T}^3\big].
\end{equation}

Enlarge $C_\delta$ to bound the initial value of this norm by
$C_\delta\varepsilon$. Choose $\varepsilon$ so small that
$$ C_\delta\big(
(2C_\delta\varepsilon)^2
+(2C_\delta\varepsilon)^3 \big)
 \leq \tfrac{1}{2} C_\delta\varepsilon.
$$
At a first time when 
$\|g\|_{\mathcal{B}_T}$ reached
$2C_\delta \varepsilon$,  \eqref{eq:nonlinear-bootstrap} would
instead bound it by $\tfrac{3}{2} C_\delta \varepsilon$.
Continuity excludes such a time. Thus
$\|g\|_{\mathcal{B}_T}\leq 2C_\delta \varepsilon$ for every $T<T_0$. The blow-up criterion gives $T_0=\infty$.

This proves \eqref{eq:nonlinear1/2}, and Fourier inversion gives the
physical pointwise bounds. Applying the unweighted estimate
of Theorem \ref{thm:linear} to \eqref{eq:mildsolution_NL},
with the same source bounds and time convolution, proves 
\eqref{eq:nonlinear_L2}.
\end{proof}

\begin{remark} 
We also claim that spatial Sobolev regularity propagates under the assumptions of the global stability theorem. In particular, if
$g_0 \in H^M_x L^2_p$ for an integer $M\geq 0$, then 
$g\in C([0,\infty);H^M_xL^2_p)$ and
$$ \sup_{t\geq0}\mathcal{E}_M[g(t)]
+\int_0^\infty\mathcal D_M[g(t)]\,dt
    \lesssim_{M,\delta}\mathcal{E}_M[g_0],
$$
where
$$
\begin{aligned}
\mathcal E_M[g] &:=
 \int_{\mathbb{R}}\langle \xi\rangle^{2M}
    \|\widehat{g}(\xi)\|_{L^2_p}^2 d\xi, \qquad
 \mathcal{D}_M[g]
 &:= 
 \int_{\mathbb{R} }\langle\xi\rangle^{2M}
 \|a^{1/2}\Pi_a^\perp
 \widehat{g}(\xi)\|_{L^2_p}^2 d\xi.
\end{aligned} 
$$
The proof follows from an energy estimate using bounds analogous to \eqref{eq:Gamma-pointwise} and
\eqref{eq:pointwise_apriori} for $\Gamma$ and $Q$,
but with one input in $L^2_p$, together with our nonlinear decay estimates proved above and Gr\"onwall's inequality.
We omit the details. 
\end{remark}

\subsection{Acknowledgements}  
The authors thank Pierre Germain for useful discussions. AM acknowledges support from the Engineering and Physical Sciences Research Council fellowship with reference UKRI2025. HX acknowledges support from a Roth Scholarship at Imperial College London and from the Royal Society Wolfson Fellowship awarded to Pierre Germain.  
\\

\noindent
\textbf{AI declaration:} 
The authors used ChatGPT 5.6 Sol during the development and preparation of this paper, and ChatGPT Astra for reviewing the final proofs.   
This work combines ideas from the
authors and several useful exchanges with AI tools.
We give here a short account of the AI contributions that we regard as most important.

AI pointed us to the reduction to the Feshbach operator in Section \ref{sec: lowfrequency_spectrum}. 
It was also instrumental in refining the bounds involving the operator $K$ and the collision frequency $a$: in particular the smoothing estimate for $K$ in Lemma \ref{lem:app_Kernelbound_edge}. These allowed us to improve our earlier decay rates for the linear semigroup  in Section \ref{sec:linear}, by obtaining better control of the edges, and establishing the convergence and quantitative bounds for the eigenvectors in Lemma \ref{lem:estimates_linear}.   
ChatGPT also generated the TikZ code for 
Figure \ref{fig:lowfrequency_spectrum}, illustrating the low-frequency spectrum.  

All suggestions and computations provided by AI were independently verified and, wherever necessary, corrected and rewritten by the authors, who of course take full responsibility.

\appendix

\section{Some properties of the collision operator} 

\subsection{The weighted eigenspace $\mathscr{N}$} \label{app:appendix_eigenspace} 
We include here a short proof of the fact that 

$$\ker(I-\mathscr{K} )
= \mathscr{N}:=a^{1/2}\ker L.$$

First, if $h \in \ker L$, then $Kh=ah$, or 
$\mathscr{K} (a^{1/2}h) = a^{1/2}h$. Thus
$\mathscr{N} \subseteq \ker(I-\mathscr{K})$.

For the reverse inclusion, weighted coercivity
\cite[Proposition 4.1]{GLM26} and
\eqref{eq:relation projections} give, for every
$h\in L^2_p$,
$$
\big \langle (I-\mathscr{K} )a^{1/2}h, a^{1/2}h \big\rangle
 =-\langle Lh, h\rangle \geq c\|a^{1/2}(I-\Pi_a)h\|_{L^2_p}^2
 =c\|\mathsf{Q} a^{1/2}h\|_{L^2_p}^2.
$$
Since $a>0$ almost everywhere, multiplication by $a^{1/2}$
has dense range in $L^2_p$ and thus by boundedness of $\mathscr{K}$ and $\mathsf{Q}$, this inequality extends to every
$w\in L^2_p$:
$$ \langle(I-\mathscr{K} ) w, w \rangle
\geq c \|\mathsf{Q} w\|_{L^2_p}^2.
$$
Now if $\mathscr{K} w=w$, implies that $\mathsf{Q} w = 0$, 
so $w\in\mathscr{N}$. This proves the equality.

\subsection{Weighted $L^2$ to $L^R$ estimate for $K$} \label{app:Kernel_smoothing_edges}

In the following Lemma, used several times throughout the manuscript, we show that the Kernel $K$ improves  integrability form $L^2$ to $L^R$, $R>2$.  
The estimate includes a negative power of $\omega$, which gives additional control of $Kh$ near the edges (in a weighted $L^R_p$ sense). 

\begin{lemma}
\label{lem:app_Kernelbound_edge}
For every $\varepsilon>0$ and
$R>\max\{2,2/\varepsilon\}$,
$$
\|\omega^{-3/2+\varepsilon}Kh\|_{L^R_p}
\lesssim_{\varepsilon,R} \|h\|_{L^2_p}.
$$
\end{lemma}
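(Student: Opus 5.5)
The plan is to split $K=-K_1+2K_2$, as in the proof of Lemma \ref{lem:kernel_splitting_linear}, and estimate each piece pointwise by an explicit kernel bound. Each bound should carry the factor $\omega(p)^{3/2}$ or better in the output variable $p$. Multiplying by $\omega(p)^{-3/2+\varepsilon}$ then leaves a harmless factor $\omega(p)^{\varepsilon}$ times an integral operator applied to $|h|$. It remains to show that this integral operator maps $L^2_p$ into $L^R_p$ once the extra weight is taken into account.

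For $K_1$ we use the Abel-type bound \eqref{eq:kernel_splitting_K1_Abel} from the appendix:
\begin{equation*}
|K_1h(p)|\lesssim \omega(p)^{3/2}\Big[\int_0^{r_-(p)}\frac{|h(q)|}{\sqrt{r_-(p)-q}}\,dq+\int_{r_+(p)}^{2\pi}\frac{|h(q)|}{\sqrt{q-r_+(p)}}\,dq\Big].
\end{equation*}
Hence
\begin{equation*}
\omega^{-3/2+\varepsilon}|K_1h|\lesssim \omega^{\varepsilon}\,(I_{1/2}|h|)\circ r_\pm ,
\end{equation*}
where $I_{1/2}$ is the fractional integral of order $1/2$. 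In one dimension, Hardy--Littlewood--Sobolev sends $L^2$ into every $L^s$ with $s<\infty$, and even into BMO. However, the naive Cauchy--Schwarz bound diverges logarithmically at $1/2$, so we do not use it. Instead we apply Hölder with an exponent slightly less than $2$ on the kernel, which yields
\begin{equation*}
\|I_{1/2}|h|\|_{L^s}\lesssim \|h\|_{L^2}\quad\text{for all } s<\infty .
\end{equation*}
Composing with $r_\pm(p)$ requires a change of variables in $p$. The root asymptotics $2\pi-r_-(p)\sim p^{1/3}$ and $2\pi-r_+(p)\sim p^3$ show that the Jacobian $|dp/dr_\pm|$ degenerates like a power of $\omega(p)$ near the edges. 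For $r_+$ in particular, $dr_+/dp\sim p^2$, so $dp\sim \omega^{-2}\,dr$ in the wrong direction. We compensate with the factor $\omega(p)^{\varepsilon R}$ (with $\varepsilon R>2$), and this is exactly where the hypothesis $R>2/\varepsilon$ comes from. Concretely, we will bound
\begin{equation*}
\int \omega(p)^{\varepsilon R}\,\big|(I_{1/2}|h|)(r_\pm(p))\big|^R\,dp
\end{equation*}
by splitting into the bulk, where the $r_\pm$ are smooth and monotone with Jacobians bounded below, and the edge $p\to0$ (and its reflection). On the edge we substitute $y=r_\pm(p)$ and use $dp\lesssim \omega(p)^{-2}\,dy$ together with $\omega(p)^{\varepsilon R-2}\lesssim 1$. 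After this substitution $\|I_{1/2}|h|\|_{L^R}\lesssim\|h\|_{L^2}$ closes the estimate.

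For $K_2$ we use the global kernel bound derived in the proof of Lemma \ref{lem:kernel_splitting_linear}, namely $|k_2(p,q)|\lesssim \omega(p)^{3/2}\omega(q)^{-1/2}$. This bound is not in $L^2_q$, since $\omega^{-1}$ is not integrable. We therefore use the refined two-regime bound near $(0,2\pi)$: $\epsilon^{-1/2}s^{3/2}$ for $s\le\epsilon$ and $\epsilon^{3/2}s^{-1/2}$ for $s\ge\epsilon$, where $\epsilon=\omega(p)$ and $s=\omega(q)$. Away from these corners the bound is $\omega(p)^2$. By Cauchy--Schwarz in $q$,
\begin{equation*}
\omega(p)^{-3/2+\varepsilon}|K_2h(p)|\lesssim \omega(p)^{\varepsilon}\big(1+|\log\omega(p)|\big)^{1/2}\,\|h\|_{L^2_p},
\end{equation*}
where the logarithm arises from $\int_\epsilon^1 s^{-1}\,ds$. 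This is bounded, so $L^\infty_p\subset L^R_p$ suffices for $K_2$.

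The main obstacle is the $K_1$ step: establishing \eqref{eq:kernel_splitting_K1_Abel} with exactly the prefactor $\omega(p)^{3/2}$ from the parametrisation \eqref{eq:parametrisation_nontrivial_branch}, and controlling the degenerate Jacobians of $r_\pm$ near the edges. If the $\omega^{\varepsilon R}$ compensation proves insufficient there, the fallback is to put part of the weight onto the input side using Hölder in $q$.
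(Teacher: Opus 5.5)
Your proposal is correct and follows the paper's proof. The components match: the same splitting $K=-K_1+2K_2$; a pointwise bound $\omega(p)^{-3/2+\varepsilon}|K_2h(p)|\lesssim\|h\|_{L^2_p}$ for $K_2$; and, for $K_1$, the Abel-type bound, the $L^2\to L^R$ estimate for the half-order fractional integral on $(0,2\pi)$, and the substitution $y=r_\pm(p)$, whose Jacobian loss $\omega(p)^{-2}$ is absorbed by $\omega(p)^{\varepsilon R}$ precisely because $\varepsilon R>2$. For $K_2$ the paper simply cites \cite[Lemma 8(i)]{GLM26}; your Cauchy--Schwarz computation with the two-regime kernel bound reproduces it, even with the sharper factor $(1+|\log\omega|)^{1/2}$.

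Two steps need tightening.

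First, the derivative bounds $|r_-'(p)|\sim p^{-2/3}$ and $|r_+'(p)|\sim p^{2}$ near $0$, together with $r_\pm'\neq 0$ on compact subsets of $(0,2\pi)$, do not follow from the asymptotics of $r_\pm$ themselves. You need them pointwise for the change of variables. The paper obtains them by implicit differentiation: $r_-'=-(P-Q)/(P+Q)$ and $r_+'=-(P+Q)/(P-Q)$, with $P>|Q|$.

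Second, the fractional-integral bound does not come from a pointwise H\"older bound, which would require $h\in L^{m'}$ with $m'>2$. It comes from Young's inequality with the truncated kernel $|\cdot|^{-1/2}$ in $L^m$, where $1/m=1/2+1/R$. Alternatively, as in the paper, use HLS from $L^s$ with $s<2$ followed by H\"older.
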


\begin{proof}
Since $\gamma>0$, the RJ profile $\mathfrak{f}$ is bounded above and below by positive constants.  Its factors will be absorbed in the
implicit constants. We may write, as in \cite{GLM26}, that
$$ K=-K_1+2K_2.
$$
The estimate for $K_2$ in \cite[Proof of Lemma 8(i)]{GLM26} gives, for every $\varepsilon > 0$,
$$
|K_2h(p)|
    \lesssim_\varepsilon
    \omega(p)^{3/2-\varepsilon}\|h\|_{L^2_p}.
$$
which proves the required estimate for
$K_2$.  
After the parametrisation, the kernel $K_1$\footnote{We abuse notation and identify an
operator and its kernel} can be written in terms of the function $F_-$: 
$$ F_-(p,q):=\left(\cos\frac{p}{2}+\cos\frac{q}{2}\right)^2
      -4\sin\frac{p}{2} \sin\frac{q}{2}.
$$
The function $F_-(p,\cdot)$ has two roots $r_-(p)<r_+(p)$, \cite[Lemmas 15]{GLM26}.  Combined with \cite[Lemma 19]{GLM26} 
these give the $K_1$-bound: 
\begin{equation} \label{eq:appendix_K1_bound}
|K_1h(p)|\lesssim\omega(p)^{3/2}
\bigg[
 \int_0^{r_-(p)}
\frac{|h(q)|}{\sqrt{r_-(p)-q}} dq
 +\int_{r_+(p)}^{2\pi}
\frac{|h(q)|}{\sqrt{q-r_+(p)}} dq
\bigg].
\end{equation}

To estimate these integrals, we perform the following change of variables: 
$ A=\frac{p+q}{4}, B=\frac{p-q}{4}$, after which the equation $F_-(p,q)=0$ becomes $\tan^2(A) = \sec^2(B)$. 
Since $r_-(p)<2\pi-p<r_+(p)$, the two branches satisfy $
    \tan A=\sec B $ when $q=r_-(p)$, and 
$ \tan A=-\sec B$ when $q=r_+(p)$. 

Define $$H_\sigma(p,q):=\tan A-\sigma\sec B \ \text{ for } \ \sigma\in\{+1,-1\}.$$
Writing $P=\sec^2A$,
   $Q=\sec B\tan B$, 
we compute $\partial_p H_\sigma = \frac{1}{4} (P-\sigma Q),
$ and $\partial_q H_\sigma=\frac{1}{4} (P+\sigma Q)$.  On each branch we have that $\partial_q H_\sigma > 0$: Indeed for $t = \tan B$, we have
$ P=2+t^2$, $
    Q=t\sqrt{1+t^2},$ 
and thus 
$$ P^2-Q^2=(2+t^2)^2-t^2(1+t^2) =4+3t^2 > 0.
$$
Since $P>0$, this means that $P >|Q|$. 
 An application of the implicit function theorem now gives
$C^1$ root maps, with
\begin{equation}
    r_-'(p)=-\frac{P-Q}{P+Q} <0,
    \qquad
    r_+'(p)=-\frac{P+Q}{P-Q}<0.
\end{equation}

Thus both root maps are strictly decreasing and have no critical
point in $(0,2\pi)$.

Since $ F_-(0,q)=\left(1+\cos\frac{q}{2} \right)^2$, 
vanishes only at $q=2\pi$ on $[0,2\pi]$, continuity
implies that both roots tend to $2\pi$ as $p\to 0$. 
If we write $q = 2\pi-z$ on either branch, the equation becomes
$$  \left(\cos\frac{p}{2} - \cos\frac{z}{2} \right)^2
    =4\sin\frac{p}{2}\sin\frac{z}{2}, 
$$
which using the identity $ \cos\frac{p}{2}-\cos\frac{z}{2}
    = -2\sin\frac{p+z}{4}\sin\frac{p-z}{4}. $ This yields the asymptotic behaviour 
\begin{equation}
    (z^2-p^2)^2\sim pz \qquad p , z\to 0.
\end{equation}
Now observe that the ratio $z/p$ cannot remain in a fixed compact subset
of $(0,\infty)$: otherwise the left-hand side  would be $O(p^4)$,
while its right-hand side would be comparable to $p^2$. Now observe that the ratio $z/p$ cannot remain in a fixed compact subset
of $(0,\infty)$: otherwise the left-hand side  would be $O(p^4)$,
while its right-hand side would be comparable to $p^2$.
On the lower branch $q=r_-(p)$, we have $z>p$,
so $z/p \to \infty$. From the above asymptotics this gives $z^4\sim pz$, equivalently 
$z \sim p^{1/3}$. Now on the upper branch $q=r_+(p)$, we have $p>z$ and $z/p \to 0$, which from the above asymptotics we rather get $z\sim p^3$. 
Thus
$$ 2\pi-r_-(p)\sim p^{1/3}, \text{ and }
    \quad 2\pi-r_+(p)\sim p^3,
    \qquad p \to 0.$$ 
Reflection symmetry, i.e.  $F_-(2\pi-p,2\pi-q)=F_-(p,q)$ now implies $r_-(2\pi-p)=2\pi-r_+(p),$
 and $r_+(2\pi-p)=2\pi-r_-(p)$. Thus  
$$ r_-(p)\sim(2\pi-p)^3, \text{ and }
    \quad r_+(p)\sim(2\pi-p)^{1/3},
 \qquad p \to 2\pi.
$$
Combined with strict monotonicity, these limits show that each $r_\pm$ maps $(0,2\pi)$ onto $(0,2\pi)$. 
Since their derivatives do not vanish, in particular both maps are $C^1$-diffeomorphisms. Thus we can now find  the asymptotics of their derivatives. As $p \to 0$ along either root branch: 
$ t<0$ and $|t| \sim (p+z)^{-1} \uparrow \infty$. 
Using the formulas for $P,Q$ above: 
$$
    P-Q=2+t^2+|t|\sqrt{1+t^2} \sim t^2,\quad P+Q
    =\frac{P^2-Q^2}{P-Q}
    =\frac{4+3t^2}{P-Q}
    \sim 1.
$$ 
Collecting everything: 
\begin{equation}
    \begin{cases}
        \text{On the lower branch, } \  z\sim p^{1/3}, \qquad |r_-'(p)|
    = \frac{P-Q}{P+Q}  \sim(p+z)^{-2}
    \sim p^{-2/3} \\
    \text{On the upper branch, }\  z\sim p^3, \qquad |r_+'(p)|  = \frac{P+Q}{P-Q}
    \sim(p+z)^2  \sim p^2,  
    \end{cases}
\end{equation} 
with the analogous reflected estimates at $2\pi$. 
Therefore, if $p_\pm=r_\pm^{-1}$, from the inverse function theorem 
$$|p_\pm'(y)|   =
    \frac{1}{|r_\pm'(p_\pm(y))|}, $$
we obtain 
$$ \sup_{0 < y < 2\pi}
    \omega(p_\pm(y))^{\varepsilon R}| p_\pm'(y) |
    =  \sup_{0 < p < 2\pi}
    \frac{\omega(p)^{\varepsilon R}}{|r_\pm'(p)|}.
$$
Near $p=0$, the two quotients on the right are comparable to $ p^{\varepsilon R+2/3}$ and 
 $ p^{\varepsilon R-2}$, 
respectively. Near $p = 2\pi$, the roles are reversed with $p$ replaced by $2\pi-p$.  Both are bounded when $\varepsilon R>2$. 
We have therefore that whenever $\varepsilon R>2: $
\begin{equation} \label{eq:appendix_bounded_inverseFT}
\sup_{0< y <2\pi} \omega(p_\pm(y))^{\varepsilon R}|p_\pm'(y)|  < \infty, 
\end{equation}
since on every compact interval of $(0,2\pi)$, boundedness follows from continuity and nonvanishing of $r_\pm'$. 

Now define the integrals 
$$ I^-|h|(y) := \int_0^y \frac{|h(q)|}{\sqrt{y-q}} dq,
    \qquad
    I^+ |h|(y) :=
    \int_y^{2\pi} \frac{|h(q)|}{\sqrt{q-y}} dq, 
$$ 
and extend $h$ by zero outside $(0,2\pi)$, so that each of these are bounded by $\int_{ \mathbb{R} } |y-q|^{-1/2} |h(q)| dq$, for $y \in(0,2\pi)$.  
Let $2 <R < \infty$ and take $1<s<2$ so that $1/R=1/s-1/2$.   
From the 1-dim Hardy-Littlewood-Sobolev inequality we get  
$$ \|I^\pm |h|\|_{L^R(0,2\pi)}
\lesssim_R \|h\|_{L^s(0,2\pi)} \lesssim_R 
\| h\|_{L^2 (0,2\pi) }, 
$$
where the second inequality followed by Hölder's, since the interval has finite measure. 

Finally, multiply
\eqref{eq:appendix_K1_bound} by
$\omega^{-3/2 + \varepsilon}$ and write 
\begin{align*}
    \|\omega^{-3/2+\varepsilon}K_1 h \|_{L^R_p}
    &\lesssim
    \|\omega^\varepsilon ( (I^-|h|)\circ r_-)\|_{L^R_p} +
    \|\omega^\varepsilon
        ((I^+ |h|)\circ r_+)\|_{L^R_p}\\
    &\lesssim_{\varepsilon,R}
    \|I^- |h|\|_{L^R_p} +\|I^+ |h|\|_{L^R_p} \lesssim_{\varepsilon,R}\|h\|_{L^2_p}, 
\end{align*}
where in the second line we used that for any measurable function $F$, changing variables
$ y= r_\pm (p) $ and using  \eqref{eq:appendix_bounded_inverseFT}: 
\begin{align*}
    \| \omega^\varepsilon(F \circ r_\pm)\|_{L^R_p} &=  \left[ \int_0^{2\pi}
        \omega(p)^{\varepsilon R}|F(r_\pm(p))|^R dp \right]^{\frac{1}{R}} =  \left[ \int_0^{2\pi}
        \omega(p_\pm(y))^{\varepsilon R}
        |p_\pm'(y)| |F(y)|^R dy \right]^{\frac{1}{R}} \\
    &\lesssim_{\varepsilon,R}
    \|F\|_{L^R_y}, 
\end{align*}
and in  the last step the Hardy-Littlewood-Sobolev inequality. 
\end{proof}

\subsection{Sharp asymptotics of the collision frequency}
\label{app:sharp_collision_frequency}
Here we provide the proof of the optimal asymptotics of the collision frequency $a(p)$ for $p \to 0^+$ or $p \to  2\pi^-$, as was  found in \cite{LukkSpohn}, \cite{GLM26}. We revisit these estimates and refine them by  determining the prefactor $a_0$: 

\begin{lemma}
\label{lem:sharp_a_limit_appendix}
Assume $\gamma>0$. There exists $a_0>0$ such that
\begin{equation}
    a(p)=a_0\omega(p)^{5/3}
    +\smallO \left(\omega(p)^{5/3}\right)
\end{equation} 
as $p\to0^+$ or $p\to2\pi^-$. In particular, the two endpoint
constants agree by reflection symmetry.
\end{lemma}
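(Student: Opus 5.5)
We will start from the structure $L=-a+K$ with $K=-K_1+2K_2$ from \cite{GLM26}. Linearising $\mathcal C$ at $\mathfrak f$, the loss part gives $a(p_0)$ as an explicit integral over the nontrivial branch:
$$a(p_0)=\int_{\mathbb T}\sigma_{02}\,\mathfrak f_1\mathfrak f_2\mathfrak f_3\Big(\frac{1}{\mathfrak f_1}+\frac{1}{\mathfrak f_2}+\frac{1}{\mathfrak f_3}-\frac{1}{\mathfrak f_0}\Big)dp_2 .$$
Here $p_1=h(p_0,p_2)$ and $p_3=p_0+p_1-p_2$, up to the exact combinatorial factors fixed by the conventions of \cite{GLM26}. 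Inserting $1/\mathfrak f=\beta\omega+\gamma$, the bracket becomes $\beta(\omega_1+\omega_2+\omega_3-\omega_0)+2\gamma=2\beta\omega_2+2\gamma+\beta(\omega_1+\omega_3-\omega_0-\omega_2)$. By the resonance condition this equals $2(\beta\omega_2+\gamma)$, which is bounded above and below. So
$$a(p_0)=\int_{\mathbb T}\frac{\omega_0\omega_1\omega_2\omega_3}{\sqrt{F_+(p_0,p_2)}}\,G(p_1,p_2,p_3)\,dp_2,$$
where $G$ is continuous and positive.

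By reflection symmetry it suffices to treat $p_0=\epsilon\to0^+$. We will identify the region of $p_2$ that carries the $\omega_0^{5/3}$ mass. Using $F_+\ge4\omega_0\omega_2$ and $\omega_1\omega_2\omega_3\le 4\omega_0$, the integrand is at most $C\omega_0^{3/2}\omega_1\omega_2^{1/2}\omega_3$. On any region where $\omega_1\omega_3$ stays bounded below, the contribution is therefore $O(\omega_0^{3/2})$ rather than $\omega_0^{5/3}$.

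This means the dominant contribution must come from where $\omega_1\omega_3$ is small: near the singular points of the branch. These are where $p_2$ sits near $2\pi$ at the scale $2\pi-p_2\sim\epsilon^{1/3}$, matching the root asymptotics $2\pi-r_-(p)\sim p^{1/3}$ from Appendix \ref{app:Kernel_smoothing_edges}. There, $F_+$ degenerates like $(\epsilon^2-s^2)^2+\epsilon s$, as in the bound from \cite[Lemma 8]{GLM26} recalled in Lemma \ref{lem:kernel_splitting_linear}.

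The plan is to perform the blow-up change of variables $2\pi-p_2=\epsilon^{1/3}y$, or equivalently $s=\omega_2=\epsilon^{1/3}y/2$, together with the corresponding expansions of $p_1,p_3$ from \eqref{eq:parametrisation_nontrivial_branch}. We Taylor expand $h(p_0,p_2)$, $\sin(p_j/2)$ and $F_+$ at leading order in $\epsilon$ with $y$ fixed. This should show that
$$\epsilon^{-5/3}\times(\text{integrand})\times\frac{dp_2}{dy}\longrightarrow \Phi(y)$$
pointwise, for an explicit positive function $\Phi$. The factor $G$ converges to its value at the limiting momenta, which is $2\mathfrak f(0)^2\mathfrak f(2\pi)\cdot$const, finite since $\gamma>0$.

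We then need an integrable dominating function on $(0,\infty)$ uniform in $\epsilon$, obtained from the same bounds $\omega_1\omega_3\lesssim\omega_0/\omega_2$ and $F_+\gtrsim$ the two-term degeneracy. Dominated convergence then gives $a(\epsilon)=a_0\omega(\epsilon)^{5/3}(1+o(1))$ with $a_0=\int_0^\infty\Phi>0$. The remaining $p_2$-regions (the bulk, and $p_2$ near $0$ or near $2\pi$ outside the scale $\epsilon^{1/3}$) must be shown to be $o(\epsilon^{5/3})$, using the crude bound together with the fact that $\omega_1\omega_3$ is small only near the singular set. Finally, reflection $p\mapsto2\pi-p$ maps the problem at $2\pi^-$ to the one at $0^+$ with the same integrand, so the constants agree.

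The main obstacle is the precise local expansion of the resonant branch near the degenerate corner. The difficulty is pinning down the exact scaling of $\omega_1,\omega_3$ and $F_+$ in terms of $y$, and checking that the off-scale regions are truly $o(\epsilon^{5/3})$ rather than merely $O(\epsilon^{5/3})$. We would first obtain the two-sided bound $a\sim\omega^{5/3}$ by region splitting, reproducing \cite[Lemma 18]{GLM26}. We would then refine it by checking that the contribution from $y\notin[\eta,\eta^{-1}]$ is $\lesssim\eta^{\kappa}\epsilon^{5/3}$ for some $\kappa>0$, which suffices for the little-$o$ statement.
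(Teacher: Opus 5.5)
Your overall route is the one the paper uses: rescale $2\pi-p_2\sim\epsilon^{1/3}$, use the normal form $F_+\sim(s^2-\epsilon^2)^2+\epsilon s$, dominate by an integrable majorant, and finish with reflection. The step that does not work as written is how you dispose of the $p_2$-region away from that corner.

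First, your crude bound is misstated. $F_+\ge4\omega_0\omega_2$ gives $\sigma_{02}\le\frac12\omega_0^{1/2}\omega_1\omega_2^{1/2}\omega_3$, not $\omega_0^{3/2}(\cdots)$. Combined with $\omega_1\omega_2\omega_3\le4\omega_0$ it becomes $\sigma_{02}\le2\omega_0^{3/2}\omega_2^{-1/2}$, i.e. $\mathcal O(\epsilon^{3/2})$ on any region. Since $\epsilon^{3/2}\gg\epsilon^{5/3}$, this can neither show that the bulk is $\smallO(\epsilon^{5/3})$ nor locate the mass. So "hence the dominant contribution comes from where $\omega_1\omega_3$ is small" does not follow.

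Second, the fact you plan to pair it with is false. By $\omega_1\omega_3\le4\omega_0/\omega_2$, the product $\omega_1\omega_3$ is $\mathcal O(\epsilon)$ on the whole bulk; at $p_0=0$ the nontrivial branch reduces to $p_1=p_2$, $p_3=0$. So smallness of $\omega_1\omega_3$ does not single out the corner. What localises the mass is the degeneration of $F_+$: $\cos\frac{p_0}{2}+\cos\frac{p_2}{2}$ vanishes only at $(p_0,p_2)=(0,2\pi)$.

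The missing ingredient is a non-degenerate lower bound on $F_+$ off the corner. For $p_2\in[0,2\pi-q_0]$ and small $p_0$, one has $\cos\frac{p_0}{2}+\cos\frac{p_2}{2}\ge1-\cos\frac{q_0}{2}-\smallO(1)$, and $\sin\frac{p_0}{2}\sin\frac{p_2}{2}\ge0$, so $F_+\gtrsim_{q_0}1$. Together with $\omega_1\omega_2\omega_3\le4\omega_0$, that region contributes $\mathcal O_{q_0}(\epsilon^2)=\smallO(\epsilon^{5/3})$; this is the paper's first step.

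On the corner $p_2\in[2\pi-q_0,2\pi]$ you then need no separate off-scale argument. With $s=\omega_2$ and the exact formula $F_+=\bigl(\sqrt{1-\epsilon^2}-\sqrt{1-s^2}\bigr)^2+4\epsilon s$, the bound $\omega_1\omega_2\omega_3\le4\epsilon$ already yields the majorant $y^{-1/2}\mathbf 1_{(0,1)}+y^{-2}\mathbf 1_{[1,\infty)}$ on the whole rescaled range. Your $\eta^\kappa$ estimate is then automatic.

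Two smaller points:
\begin{itemize}
\item The multiplication part of the linearisation is $a(p_0)=\int\sigma_{02}\mathfrak f_1\mathfrak f_2\mathfrak f_3(\mathfrak f_2^{-1}+\mathfrak f_3^{-1}-\mathfrak f_1^{-1})\,dp_2$. By resonance its bracket equals $\beta\omega_0+\gamma=\mathfrak f_0^{-1}$. Your bracket is wrong, and so is the claimed value $2(\beta\omega_2+\gamma)$; your own bracket would give $2(\beta\omega_1+\gamma)$. Positivity of $G$, which is all you use, survives.
\item The obstacle you flag mostly disappears if you do not expand $p_1,p_3$ separately. Instead use the exact identity from \cite{GLM26} writing $\omega_1\omega_2\omega_3$ as an explicit function of $(\epsilon,s)$. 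Then get $\omega_1,\omega_3\to0$ from $\omega_1-\omega_3=s-\epsilon$ and $\omega_1\omega_3=(\omega_1\omega_2\omega_3)/s$.
\end{itemize}
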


\begin{proof}
We use the representation \cite[(4.2)]{GLM26} and the identities \cite[(A.11)-(A.12)]{GLM26}.
By $a(2\pi-p)= a(p)$, it is enough to let $p \to 0^+$ and set
$ \epsilon:=\omega(p)=\sin\frac{p}{2}$.  
Writing $p_1=h(p,p_2)$ and
$p_3=p+p_1-p_2$ modulo $2\pi$, the collision frequency is exactly
\begin{equation}\label{eq:collision_frequency_exact}
a(p) = \frac{\epsilon}{\mathfrak{f}(p)}
 \int_0^{2\pi} \frac{
\omega(p_1)\omega(p_2)\omega(p_3)
  \mathfrak{f}(p_1)\mathfrak{f}(p_2)\mathfrak{f}(p_3)}{\sqrt{F_+(p,p_2)} }dp_2.
\end{equation}

Fix a small $q_0\in(0,\pi)$. On
$0\leq p_2\leq2\pi-q_0$, identity \cite[(A.11)]{GLM26} gives
$$ \omega(p_1)\omega(p_2)\omega(p_3) =
\tan^2 \left(\frac{p_2-p}{4}\right)
\sin^2 \left(\frac{p_2}{2}\right)\epsilon
\lesssim_{q_0}\epsilon. 
$$
Moreover, for all sufficiently small $p$,
$\cos(p/2)+\cos(p_2/2)$ is bounded below by a positive constant 
depending only on $q_0$. Thus
$F_+(p,p_2)^{-1/2}\lesssim_{q_0}1$ there. Since $\gamma>0$, all factors  $\mathfrak{f}$, and $\mathfrak{f}^{-1}$  are uniformly
bounded and thus the contribution of this region to
\eqref{eq:collision_frequency_exact} is
\begin{equation}\label{eq:asymptot_on_(0,pi) q_0}
    \mathcal{O}_{q_0}(\epsilon^2)=\smallO(\epsilon^{5/3}).
\end{equation} 

In the remaining corner put
$ q:=2\pi-p_2$, $ s:=\sin\frac{q}{2}$, and $s_0:=\sin\frac{q_0}{2}$. 
The change of variables is:
\begin{equation}\label{eq:edge_jacobian}
    p_2=2\pi-2\arcsin s,
    \qquad
    |dp_2|=\frac{2 ds}{\sqrt{1-s^2}}.
\end{equation}
For brevity, set
$ c_\epsilon:=\sqrt{1-\epsilon^2}$, $
 r_s:=\sqrt{1-s^2}$. Now  \cite[Identity (A.12)]{GLM26} and the definition of $F_+$ give 
\begin{align} \label{eq:edge_product+exact}
\mathcal{P}(\epsilon,s)
 := \omega(p_1)\omega(p_2)\omega(p_3)
 = \frac{\epsilon s^2(1-\epsilon s+c_\epsilon r_s)
 }{1-c_\epsilon r_s+\epsilon s}, 
\end{align}
and 
\begin{align}  \label{eq:edge_denominator_exact}
 \mathcal{F} (\epsilon,s)
 := F_+(p,p_2) 
 = (c_\epsilon-r_s)^2+4 \epsilon s.
\end{align}
Consequently, the corner contribution is exactly
\begin{equation}  \label{eq:edge_integral_exact}
a_{\rm cor}(p) =
\frac{2\epsilon}{\mathfrak{f}(p)} \int_0^{s_0}
\frac{ \mathcal{P} (\epsilon,s)
 \mathfrak{f}(p_1)\mathfrak{f}(p_2)\mathfrak{f} (p_3) }{ \sqrt{\mathcal{F}(\epsilon,s)}\sqrt{1-s^2}} ds.
\end{equation} 
After decreasing $q_0$, if necessary, these exact formulas give the
uniform normal forms
\begin{equation}  \label{eq:edge_sharp_F_P}
    \mathcal{P} (\epsilon,s)
    \sim\frac{\epsilon s^2}{(\epsilon+s)^2},
    \qquad
    \mathcal{F} (\epsilon,s)
    \sim(s^2-\epsilon^2)^2+\epsilon s.
\end{equation}
Indeed,
$1-c_\epsilon r_s+\epsilon s\sim(\epsilon+s)^2$, the other factor
in the numerator of $\mathcal{P}$ stays bounded above and below by
positive constants, and
\[ 
    c_\epsilon-r_s
    =\frac{s^2-\epsilon^2}{c_\epsilon+r_s}.
\]

For $s=\epsilon^{1/3}y$, and extending  the rescaled
density by zero for $y\geq s_0\epsilon^{-1/3}$, 
\eqref{eq:edge_integral_exact} becomes
\begin{equation}
    \frac{a_{\rm cor}(p)}{\epsilon^{5/3}}
    =\int_0^\infty D_\epsilon(y) dy, 
\end{equation}
where
\begin{equation}
\begin{aligned}
    D_\epsilon(y)
    :=
    \frac{2}{\mathfrak{f}(p)}
    \frac{
        \epsilon^{-1}
        \mathcal P(\epsilon,\epsilon^{1/3}y)
}{
        \sqrt{\epsilon^{-4/3}
          \mathcal F(\epsilon,\epsilon^{1/3}y)}
    }
    \frac{\mathfrak{f}(p_1)\mathfrak{f}(p_2)\mathfrak{f}(p_3)}{
        \sqrt{1-\epsilon^{2/3}y^2}}
    \mathbf1_{(0,s_0\epsilon^{-1/3})}(y).
\end{aligned}
    \label{eq:edge_rescaleddensity}
\end{equation}
For every fixed $y>0$, direct expansion of
\eqref{eq:edge_product+exact} and 
\eqref{eq:edge_denominator_exact} gives
\begin{equation}
    \epsilon^{-1}
    \mathcal P(\epsilon,\epsilon^{1/3}y)
    \to 4, \quad 
    \epsilon^{-4/3}
    \mathcal F(\epsilon,\epsilon^{1/3}y)
    \to \frac{y^4}{4}+4y.
\end{equation}
Also $p_2$ approaches an endpoint. On the resonant manifold,
\[ \omega(p_1)-\omega(p_3)=s-\epsilon,
    \qquad
    \omega(p_1)\omega(p_3)
    =\frac{\mathcal P(\epsilon,s)}{s}.
\]
At $s=\epsilon^{1/3}y$, both right-hand sides tend to zero and so 
$\omega(p_1),\omega(p_3)\to 0$. Hence
$ \mathfrak{f}(p)\to \gamma^{-1},
$ $ \mathfrak{f}(p_j)\to \gamma^{-1},$ for $ j=1,2,3$. 
Therefore altogether 
\begin{equation}
    D_\epsilon(y)
    \longrightarrow
    \frac{8\gamma^{-2}}{\sqrt{y^4/4+4y}}.
\end{equation}

It remains to justify the limit under the integral. The
RJ factors $\mathfrak{f}$ are uniformly bounded, and the Jacobian in
\eqref{eq:edge_jacobian} is uniformly bounded on
$0<s<s_0<1$. From \eqref{eq:edge_sharp_F_P},
\begin{equation} \label{eq:edge_sharpdominator}
    0\leq D_\epsilon(y)\leq C\left(
      y^{-1/2}\mathbf1_{(0,1)}(y)
      +y^{-2}\mathbf1_{[1,\infty)}(y)
    \right).
\end{equation}
For $0<y<1$, use $\mathcal F\gtrsim\epsilon s$. If
$s\geq\epsilon$, then $\mathcal P\lesssim\epsilon$; if
$s\leq\epsilon$, then $\mathcal P\lesssim s^2/\epsilon$ and
$y\leq\epsilon^{2/3}$. These two cases give
$D_\epsilon(y)\lesssim y^{-1/2}$. If $y\geq1$, then
$s\gg\epsilon$, so $\mathcal P\lesssim\epsilon$ and
$\mathcal F\gtrsim s^4$, giving
$D_\epsilon(y)\lesssim y^{-2}$.

The majorant in \eqref{eq:edge_sharpdominator} is integrable and thus 
dominated convergence and \eqref{eq:asymptot_on_(0,pi) q_0} yield the 
exact positive limit
\begin{equation}
\lim_{p\to 0^+}\frac{a(p)}{\omega(p)^{5/3}}
 =a_0 := 8\gamma^{-2} \int_0^\infty
    \frac{dy}{\sqrt{y^4/4+4y}}
    \in(0,\infty).
\end{equation}
The integrand is comparable to
$y^{-1/2}$ at zero and to $y^{-2}$ at infinity, so the displayed
constant is finite and strictly positive. Reflection symmetry gives
the same limit as $p \to 2\pi^-$.
\end{proof}

\bibliographystyle{alpha}
\bibliography{reference}

\end{document}